\newtheorem{lemma}{Lemma}
\newtheorem{prop}{Proposition}
\newtheorem{thm}{Theorem}
\newtheorem{cor}{Corollary}
\newtheorem*{result}{Main Theorem}
\theoremstyle{definition}
\newtheorem{defn}{Definition}
\theoremstyle{remark}
\newtheorem{rem}{Remark}
\newcounter{numl}
\newcommand{\labelnuml}{\textup{(\roman{numl})}}
\newenvironment{numlist}{\begin{list}{\labelnuml}%
{\usecounter{numl}\setlength{\leftmargin}{0pt}%
\setlength{\itemindent}{2\parindent}%
\setlength{\itemsep}{\smallskipamount}\def
\makelabel ##1{\hss \llap {\upshape ##1}}}}{\end{list}}
\newenvironment{bulletlist}{\begin{list}{\labelitemi}%
{\setlength{\leftmargin}{\parindent}\def
\makelabel ##1{\hss \llap {\upshape ##1}}}}{\end{list}}
\DeclareSymbolFont{script}{U}{eus}{m}{n}
\DeclareSymbolFontAlphabet{\mathscr}{script}
\DeclareMathSymbol{\Wedge}{0}{script}{"5E}
\DeclareMathAlphabet{\mathrmsl}{OT1}{cmr}{m}{sl}
\newcommand{\Ln}{{\mathrmsl\Lambda}}
\newcommand{\R}{{\mathbb R}}
\newcommand{\C}{{\mathbb C}}
\newcommand{\N}{{\mathbb N}}
\newcommand{\cL}{{\mathcal L}}
\newcommand{\cO}{{\mathcal O}}
\newcommand{\cS}{{\mathcal S}}
\newcommand{\tor}{{\mathfrak t}}
\newcommand{\rc}[1]{\mathit{ric}^{#1}}
\newcommand{\s}[1]{\mathit{s}_{#1}}
\newcommand{\Id}{\mathit{Id}}
\newcommand{\sym}{\mathop{\mathrm{sym}}\nolimits}
\newcommand{\Sym}{\mathrm{Sym}}
\newcommand{\trace}{\mathop{\mathrm{tr}}\nolimits}
\newcommand{\grad}{\mathop{\mathrm{grad}}\nolimits}
\renewcommand{\d}{\mathrmsl d}
\newcommand{\eps}{\varepsilon}
\newcommand{\ip}[1]{\langle #1 \rangle}
\newcommand{\ipq}[1]{\langle #1 \rangle}
\newcommand{\spn}[1]{\mathopen< #1\mathclose>}
\newcommand{\Kmap}{\boldsymbol K}
\newcommand{\ang}{\boldsymbol t}
\newcommand{\taumap}{\boldsymbol\tau}
\newcommand{\ximap}{\boldsymbol\xi}
\newcommand{\etamap}{\boldsymbol\eta}
\newcommand{\bx}{\boldsymbol x}
\newcommand{\by}{\boldsymbol y}
\newcommand{\bz}{\boldsymbol z}
\newcommand{\Proj}{\mathrm P}
\newcommand{\iI}{\sqrt{-1}}%{\boldsymbol i}
\newcommand{\xms}{u}
\begin{document}
\strut\vspace{-5mm}
\title[Ambitoric geometry I]
{Ambitoric geometry I: Einstein metrics\\
and extremal ambik\"ahler structures}
\author[V. Apostolov]{Vestislav Apostolov}
\address{Vestislav Apostolov \\ D{\'e}partement de Math{\'e}matiques\\
UQAM\\ C.P. 8888 \\ Succursale Centre-ville \\ Montr{\'e}al (Qu{\'e}bec) \\
H3C 3P8 \\ Canada}
\email{apostolov.vestislav@uqam.ca}
\author[D.M.J. Calderbank]{David M. J. Calderbank}
\address{David M. J. Calderbank \\ Department of Mathematical Sciences\\
University of Bath\\ Bath BA2 7AY\\ UK}
\email{D.M.J.Calderbank@bath.ac.uk}
\author[P. Gauduchon]{Paul Gauduchon}
\address{Paul Gauduchon \\ Centre de Math\'ematiques\\
{\'E}cole Polytechnique \\ UMR 7640 du CNRS\\ 91128 Palaiseau \\ France}
\email{pg@math.polytechnique.fr}
\date{February 2013}
\begin{abstract} We present a local classification of conformally equivalent
but oppositely oriented $4$-dimensional K\"ahler metrics which are toric with
respect to a common $2$-torus action.  In the generic case, these
``ambitoric'' structures have an intriguing local geometry depending on a
quadratic polynomial $q$ and arbitrary functions $A$ and $B$ of one variable.

We use this description to classify Einstein $4$-metrics which are hermitian
with respect to both orientations, as well a class of solutions to the
Einstein--Maxwell equations including riemannian analogues of the
Pleba\'nski--Demia\'nski metrics. Our classification can be viewed as a
riemannian analogue of a result in relativity due to R.~Debever, N.~Kamran,
and R.~McLenaghan, and is a natural extension of the classification of
selfdual Einstein hermitian $4$-manifolds, obtained independently by R.~Bryant
and the first and third authors.

These Einstein metrics are precisely the ambitoric structures with vanishing
Bach tensor, and thus have the property that the associated toric K\"ahler
metrics are extremal (in the sense of E.~Calabi).  Our main results also
classify the latter, providing new examples of explicit extremal K\"ahler
metrics. For both the Einstein--Maxwell and the extremal ambitoric structures,
$A$ and $B$ are quartic polynomials, but with different conditions on the
coefficients. In the sequel to this paper we consider global examples, and use
them to resolve the existence problem for extremal K\"ahler metrics on toric
$4$-orbifolds with $b_2=2$.
\end{abstract}
\maketitle
\vspace{-3mm}

\section*{Introduction}

Riemannian geometry in dimension four is remarkably rich, both intrinsically,
and through its interactions with general relativity and complex surface
geometry. In relativity, analytic continuations of families of lorentzian
metrics and/or their parameters yield riemannian
ones~\cite{Berard-Bergery,Page}, while concepts and techniques in one area
have analogues in the other. In complex geometry, E.~Calabi's extremal
K\"ahler metrics~\cite{Cal1} have become a focus of attention as they provide
canonical riemannian metrics on polarized complex manifolds, generalizing
constant Gauss curvature metrics on complex curves. The first nontrivial
examples are on complex surfaces.

This paper concerns a notion related both to relativity and complex surface
geometry. An \emph{ambik\"ahler structure} on a real $4$-manifold (or
orbifold) $M$ consists of a pair of K\"ahler metrics $(g_+, J_+,
\omega_+)$ and $(g_-, J_-, \omega_-)$ such that
\begin{bulletlist}
\item $g_+$ and $g_-$ induce the same conformal structure (i.e., $g_- =
f^2g_+$ for a positive function $f$ on $M$);
\item $J_+$ and $J_-$ have opposite orientations (equivalently the
volume elements $\frac1{2}\omega_+\wedge\omega_+$ and
$\frac1{2}\omega_-\wedge\omega_-$ on $M$ have opposite signs).
\end{bulletlist}
A product of two Riemann surfaces is ambik\"ahler. To obtain more interesting
examples, we suppose that both K\"ahler metrics are toric, with common torus
action, which we call ``ambitoric''. More precisely, we suppose that
\begin{bulletlist}
\item there is a $2$-dimensional subspace $\tor$ of vector fields on $M$,
linearly independent on a dense open set, whose elements are hamiltonian and
Poisson-commuting Killing vector fields with respect to both $(g_+, \omega_+)$
and $(g_-, \omega_-)$.\footnote{If $\omega$ is a symplectic form, hamiltonian
vector fields $K_1=\grad_{\omega} f_1$ and $K_2=\grad_{\omega} f_2$
\emph{Poisson-commute} iff the Poisson bracket $\{f_1,f_2\}$ with respect to
$\omega$ is zero. This holds iff $\omega(K_1,K_2)=0$.}
\end{bulletlist}

The theory of hamiltonian $2$-forms in four dimensions~\cite{ACG} implies that
any orthotoric K\"ahler metric and certain K\"ahler metrics of Calabi type are
ambitoric. Such metrics provide interesting examples of extremal K\"ahler
surfaces~\cite{Cal1,Christina1,CPTV,Hwang-Simanca,Hwang-Singer,ACG,ACGT,Eveline}.
Here we give a local classification of ambitoric structures in general, and an
explicit description of the extremal K\"ahler metrics thus unifying and
generalizing these works.

Our examples include riemannian analogues of Pleba\'nski--Demia\'nski
metrics~\cite{Pleb-Dem}; the latter are Einstein--Maxwell spacetimes of Petrov
type D, which have been extensively studied~\cite{GP}, and classified by
R.~Debever, N.~Kamran and R.~G. McLenaghan~\cite{DKM}. In riemannian geometry,
the type D condition means that both half-Weyl tensors $W^\pm$ are degenerate,
i.e., at any point of $M$ at least two of the three eigenvalues of $W^\pm$
coincide (where $W^+$ and $W^-$ are viewed as symmetric tracefree operators
acting on the three-dimensional spaces of selfdual and antiselfdual $2$-forms
respectively). Einstein metrics $g$ with degenerate half-Weyl tensors have
been classified when $W^+=0$ or $W^-=0$~\cite{AG2}---otherwise, the riemannian
Goldberg--Sachs theorem~\cite{PB,Boyer,Nurowski,AG1} and the work of
A. Derdzi\'nski~\cite{De} imply that $g$ is ambik\"ahler, with compatible
K\"ahler metrics $g_\pm = \smash{|W^\pm|_g^{2/3}}g$; conversely
$g=s_\pm^{-2}g_\pm$, where $s_\pm$ are the scalar curvatures of $g_\pm$. From
the $J_\pm$-invariance of the Ricci tensor of $g$, it follows that
$\grad_{\omega_\pm} s_\pm$ are commuting Killing vector fields for $g_\pm$,
which means that $g_{\pm}$ are both extremal K\"ahler metrics. A little more
work yields the following result.

\begin{thm}\label{thm:AHE-refined}  Let $(M,g)$ be an oriented Einstein
$4$-manifold with degenerate half-Weyl tensors $W^\pm$.  Then $g$ admits
compatible ambitoric extremal metrics $(g_\pm,J_\pm,\omega_\pm,\tor)$ near any
point in a dense open subset of $M$. Conversely, an ambik\"ahler structure is
conformally Einstein on a dense open subset if and only if its Bach tensor
vanishes.
\end{thm}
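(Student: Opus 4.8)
The plan is to prove the two directions separately, in both cases building on the ambik\"ahler description recalled just before the statement. For the forward direction, assume (as there) that both $W^\pm$ are nonzero on a dense open set, the cases $W^+=0$ or $W^-=0$ being covered by \cite{AG2}; the Goldberg--Sachs theorem together with \cite{De} then yields the ambik\"ahler pair $g_\pm=|W^\pm|_g^{2/3}g$ with $g=s_+^{-2}g_+=s_-^{-2}g_-$ and each $g_\pm$ extremal. The heart of the matter is to assemble the two separate extremal Killing fields into one commuting torus acting on \emph{both} structures. Writing $K_\pm:=\grad_{\omega_\pm}s_\pm$, the crucial observation is that $K_+$ is Killing for $g$ itself: it is Killing for $g_+$ by Calabi's characterization of extremality, and it preserves its own Hamiltonian, $K_+(s_+)=\omega_+(K_+,K_+)=0$, whence $\cL_{K_+}(s_+^{-2}g_+)=0$. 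Since $g$ is Einstein, any $g$-Killing field preserves the full curvature, in particular $|W^-|_g$ and the eigendata of $W^-$ that determine $J_-$; thus $K_+$ is Killing for $g_-$ and preserves $J_-$, $\omega_-$ and $s_-$, so that $K_+(s_-)=0$. By the symmetric argument $K_-$ is Killing for $g$ and $g_+$ and satisfies $K_-(s_+)=0$.

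Consequently $\tor:=\langle K_+,K_-\rangle$ is a space of Killing fields for both metrics. Hamiltonicity with respect to both symplectic forms is immediate: $K_\pm$ is Hamiltonian for $\omega_\pm$ by construction, and Hamiltonian for $\omega_\mp$ because it is Killing and preserves $\omega_\mp$, so $\iota_{K_\pm}\omega_\mp$ is closed. Poisson-commutativity follows from the preservation statements, since $\omega_+(K_+,K_-)=\d s_+(K_-)=K_-(s_+)=0$ and likewise $\omega_-(K_-,K_+)=K_+(s_-)=0$; the same preservation gives $\cL_{K_+}K_-=[K_+,K_-]=0$. Finally $K_+$ and $K_-$ are linearly independent on the dense open set where they fail to be proportional, and there one obtains the compatible ambitoric extremal structure $(g_\pm,J_\pm,\omega_\pm,\tor)$. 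The degenerate strata, where one of $s_\pm$ is locally constant (so one of the $K_\pm$ degenerates), fall under the self-dual case or the Calabi-type families already understood.

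For the converse, the conformal invariance of the Bach tensor in dimension four, together with its vanishing on Einstein metrics, gives the ``only if'': if the common conformal class contains an Einstein metric on a dense open set then $B\equiv0$ there, hence on all of $M$. For the ``if'' I would invoke Derdzi\'nski's computation \cite{De} of the Bach tensor of a K\"ahler surface. Because $W^+$ of a K\"ahler metric is algebraically determined by its scalar curvature and K\"ahler form, the Bach tensor of $g_+$ becomes a universal expression in $s_+$ and its covariant derivatives, and its vanishing is equivalent, on the open set $\{s_+\ne0\}$, to $s_+^{-2}g_+$ being Einstein. When $s_+\not\equiv0$ this set is already dense and open, giving the claim; when $s_+\equiv0$ one has $W^+=0$ and applies the same argument to $g_-$ on $\{s_-\ne0\}$, the remaining locus $\{s_+=0\}\cap\{s_-=0\}$ forcing $W^+=W^-=0$ and hence conformal flatness.

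I expect the main obstacle to be this ``if'' direction: setting up Derdzi\'nski's explicit Bach-tensor formula for a K\"ahler surface and checking that its vanishing is precisely the Einstein equation for $s_\pm^{-2}g_\pm$ is a genuine curvature computation, and some bookkeeping across the zero loci of $s_+$ and $s_-$ (using both orientations) is needed to produce a single conformally Einstein structure on a dense open subset. By contrast, the only delicate point in the forward direction---that a symmetry of one K\"ahler structure is automatically a symmetry of the other---is dispatched cleanly by routing it through the Einstein metric $g$, as above.
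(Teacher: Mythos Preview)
Your approach coincides with the paper's, and the converse is essentially correct; the one step you gloss over---that $\{s_+\ne 0\}$ is dense when $s_+\not\equiv 0$---is handled in the paper by observing that Bach-flatness forces $g_+$ to be extremal, so $J_+\grad_{g_+}s_+$ is holomorphic and unique continuation applies.

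In the forward direction there is a genuine gap in your case analysis. You correctly route both $K_\pm$ through the Einstein metric $g$ to show each is Killing for both K\"ahler structures and that $\omega_\pm(K_+,K_-)=0$; this is exactly the paper's argument (Proposition~\ref{p:generic-ein}). But you then assert that $K_+$ and $K_-$ are linearly independent on a dense open set, identifying the degenerate locus only with the strata where one of $s_\pm$ is locally constant. You are missing the case in which both $K_\pm$ are nonzero but \emph{pointwise proportional} on an open set: there your $\tor=\langle K_+,K_-\rangle$ is one-dimensional and you do not yet have an ambitoric structure. The paper treats this via Proposition~\ref{p:diagonal-ambi} (case~(iii)): on such an open set the ambik\"ahler structure is of Calabi type with respect to the common Killing direction, and the extremal hypothesis then forces the quotient Riemann surface to have constant Gauss curvature (Proposition~\ref{p:calabi-type}), which supplies the missing second Killing field. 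Your closing sentence gestures at Calabi-type degenerations but, as written, does not cover this case since neither $s_+$ nor $s_-$ need be locally constant there.

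A smaller point: for the half-conformally-flat case, \cite{AG2} classifies self-dual Einstein hermitian metrics but does not by itself yield an ambitoric structure; the paper closes this by noting that the conformal K\"ahler metric is then Bochner-flat and invoking \cite{Bryant,ACG} to see it is orthotoric or of Calabi type over a constant-curvature base, hence ambitoric.
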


This suggests classifying such Einstein metrics within the broader context of
\emph{extremal ambik\"ahler metrics} or, equivalently, ambik\"ahler metrics
for which the Bach tensor is \emph{diagonal}, i.e., both $J_+$ and $J_-$
invariant.  We also discuss riemannian metrics of ``Pleba\'nski--Demia\'nski
type'', for which the tracefree Ricci tensor satisfies $\rc{g}_0(X,Y) = c\,
g(\omega_+(X),\omega_-(Y))$ for some constant $c$.  In particular $\rc{g}$ is
{diagonal}.  These two curvature generalizations also give rise to ambitoric
structures.

\begin{thm}\label{thm:AT-rough} An ambik\"ahler structure
$(g_\pm, J_{\pm},\omega_{\pm})$, not locally a K\"ahler product, nor of Calabi
type, nor conformal to a $\pm$-selfdual Ricci-flat metric, is locally\textup:
\begin{bulletlist}
\item ambitoric if and only if there is a compatible metric $g$ with $\rc{g}$
diagonal\textup; further, $g$ has Pleba\'nski--Demia\'nski type if and only if
it has constant scalar curvature\textup;
\item extremal and ambitoric if and only if the Bach tensor of $c$ is
  diagonal.
\end{bulletlist}
\end{thm}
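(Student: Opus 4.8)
The plan is to express all four conditions in the statement—ambitoric, diagonal $\rc{g}$, Plebański--Demiański type, and diagonal Bach tensor—as explicit equations in a common local normal form for ambikähler structures, and then to match them. First I would record the general local description of an ambikähler structure away from the three excluded degeneracies. The two complex structures, together with the conformal class, determine a pair of functions $x,y$ (for instance the eigenvalues of the $g_+$-symmetric operator $-\tfrac12(J_+J_-+J_-J_+)$, encoding the conformal factor and the relative angle of $J_\pm$) which are functionally independent precisely when the structure is not a Kähler product (both eigenvalues constant), not of Calabi type (one eigenvalue constant), and not conformal to a $\pm$-selfdual Ricci-flat metric (the remaining integrability degeneracy). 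On the open set where $x,y$ are coordinates, the conformal class, the two Kähler forms $\omega_\pm$ and the momentum one-forms can all be written in terms of $x,y$ and the two ``angular'' one-forms dual to the directions in which $J_\pm$ act, the residual freedom being a pair of structure functions. The structure is toric exactly when these structure functions separate, i.e.\ decouple into a function of $x$ alone and a function of $y$ alone—this is the source of the one-variable functions $A,B$ and the quadratic $q$—and this separation is equivalent to the two angular vector fields being Killing and Poisson-commuting, hence to ambitoricity.

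With this normal form in hand, the first bullet becomes a computation. For the forward direction I would exhibit the compatible metric $g$ explicitly (the natural barycentric rescaling in the conformal class) and verify, using the Kähler identities for $g_\pm$, that $\rc{g}$ commutes with both $J_+$ and $J_-$, i.e.\ is diagonal. For the converse I would impose $J_\pm$-invariance of $\rc{g}$ directly on the general ambikähler form: writing the $J_\pm$-anti-invariant part of $\rc{g}$ in the $(x,y)$-frame shows that its vanishing forces exactly the separation of the structure functions, hence the commuting Killing fields and ambitoricity. The refinement is then immediate, since both Plebański--Demiański type, $\rc{g}_0(X,Y)=c\,g(\omega_+(X),\omega_-(Y))$, and constant scalar curvature reduce in the separated variables to the same constraint linking the one-variable functions, so the two are equivalent.

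For the second bullet the forward implication is conceptual: since the Bach tensor is a conformal invariant common to $g_+$ and $g_-$, Derdziński's description of the Bach tensor of a Kähler surface makes its $J_+$- (resp.\ $J_-$-) invariance equivalent to $g_+$ (resp.\ $g_-$) being extremal, so an extremal ambitoric structure has diagonal Bach. For the converse I would again pass to the normal form: a diagonal Bach tensor makes both $g_\pm$ extremal, so that $\grad_{\omega_\pm}\s{\pm}$ are holomorphic Killing fields, and combining this with the first bullet forces the common torus $\tor$; in the normal form this is matched by $A$ and $B$ becoming quartic polynomials. The hardest step throughout is the converse of the first bullet—deducing the separability (integrability) of the ambikähler structure from the single tensorial requirement that $\rc{g}$ be diagonal—because this is where a pointwise curvature identity must be upgraded to the existence, on a whole chart, of two commuting Killing symmetries; it is precisely the explicit normal form, and the way the $J_\pm$-anti-invariant Ricci components isolate the cross-derivatives of the structure functions, that makes this tractable.
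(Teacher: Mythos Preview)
Your plan has two concrete errors and misses the key mechanism the paper uses.

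First, in an ambihermitian structure the complex structures $J_+$ and $J_-$ \emph{commute} (this is exactly what makes $TM=T_+M\oplus T_-M$ split into $\pm 1$-eigenbundles of $-J_+J_-$). Hence your proposed operator $-\tfrac12(J_+J_-+J_-J_+)=-J_+J_-$ is an involution with constant eigenvalues $\pm 1$, and cannot produce functionally varying coordinates $x,y$. You may be thinking of bihermitian geometry (same orientation), where the angle between $J_+$ and $J_-$ does vary; here it does not. The paper's $x,y$ arise only \emph{after} one has the torus action: they parametrize the images in $\Proj(\tor)$ of the eigenlines of $-J_+J_-$ inside the orbit tangent space, and make sense precisely because the structure is already toric.

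Second, the barycentric metric $g_0$ does \emph{not} in general have diagonal Ricci tensor. In the regular ambitoric normal form the $\tor$-invariant compatible metrics with diagonal Ricci are exactly $g=q(x,y)^2 p(x,y)^{-2}g_+$ for $p\in S^2_{0,q}W^*$ (Proposition~\ref{p:diagonal-ricci}), and $g_0=q(x,y)(x-y)^{-1}g_+$ is not of this form. So your forward direction for the first bullet, as written, fails.

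More fundamentally, the paper does not build a ``general ambik\"ahler normal form'' and then read off separability from the vanishing of cross-derivatives. Its route is conceptual: if $g=\varphi_\pm^{-2}g_\pm$ has $J_\pm$-invariant Ricci tensor, then $Z_\pm:=\grad_{\omega_\pm}\varphi_\pm$ are Killing (Proposition~\ref{p:J-inv-ric}), and one argues directly that $Z_\pm$ commute, are $\omega_\pm$-isotropic, and preserve both $J_+$ and $J_-$; Proposition~\ref{p:diagonal-ambi} then sorts the possibilities according to the linear (in)dependence of $Z_+$ and $Z_-$, with the Calabi/product cases excluded by hypothesis. The one genuinely delicate point is showing $\cL_{Z_+}J_-=0=\cL_{Z_-}J_+$ in the borderline selfdual Einstein case, which the paper handles via Pontecorvo's twistor $2$-form description and a Bochner argument---this is where the ``not conformal to a $\pm$-selfdual Ricci-flat metric'' hypothesis is actually consumed. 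Your proposal neither isolates this case nor explains how a normal-form computation would get around it. The converse (ambitoric $\Rightarrow$ existence of $g$ with diagonal Ricci) and the Pleba\'nski--Demia\'nski refinement are then read off from the explicit classification (Propositions~\ref{p:diagonal-ricci} and~\ref{p:einstein-maxwell}), not from the barycentric metric.

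Your second bullet is closer to the mark: the paper also uses Derdzi\'nski's identification of $J$-invariance of the Bach tensor with extremality of the K\"ahler metric, and then feeds the resulting diagonal-Ricci metric $g=s_+^{-2}g_+$ back into Proposition~\ref{p:diagonal-ambi}. But note that this reduction to the first bullet again relies on the Killing-field mechanism above, not on a separability computation.
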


Thus motivated, we study ambitoric structures in general and show that in a
neighbourhood of any point, they are either of Calabi type (hence classified
by well-known results), or ``regular''. Our explicit local classification in
the regular case (Theorem~\ref{thm:ambitoric}) relies on subtle underlying
geometry which we attempt to elucidate, although some features remain
mysterious. For practical purposes, however, the classification reduces
curvature conditions (PDEs) on ambitoric structures to systems of functional
ODEs. We explore this in greater detail in section~\ref{s:curvature}, where we
compute the Ricci forms and scalar curvatures for an arbitrary regular
ambitoric pair $(g_+, g_-)$ of K\"ahler metrics. This leads to an explicit
classification of the extremal and conformally Einstein examples
(Theorem~\ref{thm:main}). We also identify the metrics of
Pleba\'nski--Demia\'nski type among ambitoric structures
(Theorem~\ref{thm:einstein-maxwell})---their relation to Killing tensors is
discussed in Appendix~\ref{Killing}. We summarize the main results from
Theorems~\ref{thm:ambitoric}--\ref{thm:einstein-maxwell} loosely as follows.

\begin{result} Let $(g_\pm,J_\pm,\omega_\pm,\tor)$ be a regular ambitoric
structure. Then\textup:
\begin{bulletlist}
\item there is a quadratic polynomial $q$ and functions $A$ and $B$ of one
variable such that the ambitoric structure is given
by~\eqref{g0-xy}--\eqref{omegaminus-xy} \textup(and these are regular
ambitoric\textup)\textup;
\item $(g_+,J_+)$ is an extremal K\"ahler metric $\Leftrightarrow$ $(g_-,J_-)$
is an extremal K\"ahler metric $\Leftrightarrow$ $A$ and $B$ are quartic
polynomials constrained by three specific linear conditions\textup;
\item $g_\pm$ are conformally Einstein \textup(i.e., Bach-flat\textup) if and
only if they are extremal, with an additional quadratic relation on the
coefficients of $A$ and $B$\textup;
\item $g_\pm$ are conformal to a constant scalar curvature metric of
Pleba\'nski--Demia\'nski type if and only if $A$ and $B$ are quartic
polynomials constrained by three specific linear conditions \textup(different,
in general, from the extremality conditions\textup).
\end{bulletlist}
\end{result}
\begin{cor} Let $(M,g)$ be an Einstein $4$-manifold for which the half-Weyl
tensors $W^+$ and $W^-$ are everywhere degenerate. Then on a dense open subset
of $M$, the metric $g$ is locally homothetic to one of the following\textup:
\begin{bulletlist}
\item a real space form\textup;
\item a product of two Riemann surfaces with equal constant Gauss
curvatures\textup;
\item an Einstein metric of the form $s_+^{-2}g_+$, where $g_+$ is a Bach-flat
K\"ahler metric with nonvanishing scalar curvature $s_+$, described in
Proposition~\textup{\ref{p:calabi-type}} or Theorem~\textup{\ref{thm:main}}.
\end{bulletlist}
\end{cor}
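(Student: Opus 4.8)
The plan is to stratify $M$ according to the vanishing loci of $W^+$ and $W^-$ and to reduce each stratum to a classification already available. Since the eigenvalue multiplicities of $W^\pm$ (in particular, whether each half-Weyl tensor vanishes) are locally constant off a closed subset, I would first pass to the dense open subset $M_0\subset M$ on which the ranks of $W^+$ and $W^-$ are locally constant; every component of $M_0$ then falls into one of the cases determined by whether $W^+$ and/or $W^-$ vanishes identically. On the locus where $W^+\neq0$, degeneracy of $W^+$ means exactly two of its eigenvalues coincide, so the analytic hypotheses of Goldberg--Sachs and of \cite{De} are met there.

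On a component where $W^+\equiv0\equiv W^-$ the full Weyl tensor vanishes, so $g$ is conformally flat; being also Einstein, it has constant sectional curvature and is a real space form, giving the first case. On a component where exactly one of $W^\pm$ vanishes---say $W^-\equiv0$ while $W^+\neq0$---the metric $g$ is selfdual, Einstein, with two-fold degenerate selfdual Weyl tensor; these are precisely the selfdual Einstein hermitian metrics classified in \cite{AG2} and independently by R.~Bryant and by the first and third authors. Here the riemannian Goldberg--Sachs theorem \cite{PB,Boyer,Nurowski,AG1} together with \cite{De} produces a K\"ahler metric $g_+=|W^+|_g^{2/3}g$ with $g=s_+^{-2}g_+$; since $W^+\neq0$ forces $s_+\neq0$ (the selfdual Weyl tensor of a K\"ahler surface is a multiple of its scalar curvature) and since Bach-flatness is conformally invariant, $g_+$ is a Bach-flat K\"ahler metric with nonvanishing scalar curvature, and one checks that the classification of \cite{AG2} places it among the metrics of Proposition~\ref{p:calabi-type} or Theorem~\ref{thm:main}. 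This yields the third case, the residual selfdual Ricci-flat subcase reducing to a space form.

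The main case is a component on which both $W^+$ and $W^-$ are nowhere zero. There $g$ is ambik\"ahler with compatible K\"ahler metrics $g_\pm=|W^\pm|_g^{2/3}g$ and $g=s_\pm^{-2}g_\pm$, and Theorem~\ref{thm:AHE-refined} upgrades this, near any point of a dense open subset, to a compatible \emph{ambitoric} extremal structure $(g_\pm,J_\pm,\omega_\pm,\tor)$. Being conformally Einstein, it is Bach-flat, so I would apply the local classification (the Main Theorem, via Theorem~\ref{thm:AT-rough} and Theorem~\ref{thm:main}): a Bach-flat ambitoric structure is either a K\"ahler product, of Calabi type, conformal to a $\pm$-selfdual Ricci-flat metric, or regular. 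A K\"ahler product that is Einstein must be a product of two Riemann surfaces of equal constant Gauss curvature (the second case); the selfdual Ricci-flat alternative was already absorbed into the selfdual strata; and the remaining possibilities are exactly the Bach-flat K\"ahler metrics of Proposition~\ref{p:calabi-type} (Calabi type) and Theorem~\ref{thm:main} (regular), with $s_+\neq0$ because $W^+\neq0$, giving the third case.

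The principal obstacle is the bookkeeping at the boundaries between strata, and confirming that each degenerate or exceptional possibility in the ambitoric trichotomy is matched with one of the three listed conclusions rather than producing a genuinely new family: concretely, verifying that $s_+$ (equivalently $W^+$) does not vanish on the generic stratum so that $g=s_+^{-2}g_+$ is well defined, that the Einstein condition forces equality of the Gauss curvatures in the product case, and that the conformal-to-selfdual-Ricci-flat case contributes nothing beyond the space forms. Once the strata are organized in this way, each reduces to an already-established classification, so no new computation is required beyond these consistency checks.
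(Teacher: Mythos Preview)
Your stratification by the vanishing of $W^\pm$ followed by reduction to the ambitoric classification is precisely the route the paper takes (implicitly, via the proof of Theorem~\ref{thm:AHE-refined} together with Proposition~\ref{p:generic-ein}, whose case~(i) supplies the product of Riemann surfaces with equal constant Gauss curvatures). Two minor corrections: the dichotomy for ambitoric structures---Calabi type versus regular---comes from Theorem~\ref{thm:ambitoric} and the corollary preceding Definition~\ref{regular}, not from the exclusions in the hypothesis of Theorem~\ref{thm:AT-rough}, which you have listed as if they were cases; and in the half-conformally-flat stratum, the placement of $g_+$ into Proposition~\ref{p:calabi-type} or Theorem~\ref{thm:main} goes through the Bochner-flat K\"ahler classification~\cite{Bryant,ACG} (orthotoric or Calabi type), rather than directly through~\cite{AG2}.
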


In the second part of this work we shall obtain global consequences of these
local classification results. In particular, we shall resolve the existence
problem for extremal K\"ahler metrics on toric $4$-orbifolds with $b_2=2$.

\smallbreak

The first author was supported by an NSERC Discovery Grant and is grateful to
the Institute of Mathematics and Informatics of the Bulgarian Academy of
Sciences where a part of this project was realized. The second author thanks
the Leverhulme Trust and the William Gordon Seggie Brown Trust for a
fellowship when this project was conceived in 2001, and to the EPSRC for a
subsequent Advanced Research Fellowship. The authors are grateful to Liana
David and the Centro Georgi, Pisa, and to Banff International Research Station
for opportunities to meet in 2006 and 2009, when much of this work was carried
out. They thank Maciej Dunajski, Niky Kamran, Claude LeBrun and Arman
Taghavi-Chabert for very useful discussions and comments.

\section{Conformal hermitian geometry}

\subsection{Conformal hermitian structures}

Let $M$ be a $4$-dimensional manifold. A \emph{hermitian metric} on $M$ is
defined by a pair $(g,J)$ consisting of a riemannian metric $g\in
C^\infty(M,S^2T^*M)$ and an integrable almost complex structure $J\in
C^\infty(M,\mathrm{End}(TM))$, which are \emph{compatible} in the sense that
$g(J\cdot, J\cdot) = g(\cdot, \cdot)$.

The \emph{fundamental $2$-form} or \emph{K\"ahler form} $\omega^g\in
\Omega^2(M)$ of $(g,J)$ is defined by $\omega^g (\cdot, \cdot):=g(J\cdot,
\cdot)$; it is a $J$-invariant $2$-form of square-norm $2$.  The volume form
$v_g=\tfrac12\omega^g\wedge\omega^g$ induces an orientation on $M$ (the
complex orientation of $J$) for which $\omega^g$ is a section of the bundle
$\Wedge^+M$ of \emph{selfdual} $2$-forms; the bundle $\Wedge^-M$ of
\emph{antiselfdual} $2$-forms is then identified with the bundle of
$J$-invariant $2$-forms orthogonal to $\omega^g$.

For any metric $\tilde g=f^{-2}g$ conformal to $g$ (where $f$ is a positive
function on $M$), the pair $(\tilde g, J)$ is also hermitian. The \emph{Lee
form} $\theta^g\in \Omega^1(M)$ of $(g,J)$ is defined by
\begin{equation*}
\d\omega^g  = -2\theta^g \wedge \omega^g,
\end{equation*}
or equivalently $\theta^g = -\frac12 J \delta^g \omega^g,$ where $\delta^g$
is the co-differential with respect to the Levi-Civita connection $D^g$ of
$g$. Since $J$ is integrable, $\d\omega^g$ measures the deviation of $(g,J)$
from being a \emph{K\"ahler structure} (for which $J$ and $\omega^g$ are
parallel with respect to $D^g$). Thus a hermitian metric $g$ is K\"ahler iff
$\theta^g=0$. Indeed
\begin{equation}\label{DJ}
D^g_X \omega^g = J\theta^g\wedge X^\flat + \theta^g \wedge JX^\flat,
\end{equation}
where $X^\flat:=g(X,\cdot)$ denotes the $1$-form dual to the vector field $X$
(see e.g., \cite{AG1}).

If $\tilde g = f^{-2} g$, the corresponding Lee forms are linked by
$\theta^{\tilde g} = \theta^g + \d\log f$; it follows that there is a K\"ahler
metric conformal to $g$ iff $\theta^g$ is exact; locally, this is true iff
$\d\theta^g=0$, and $g$ is then uniquely determined up to homothety.

\begin{rem}
A conformally invariant (and well known) interpretation of the Lee form may be
obtained from the observation that a conformal class of riemannian metrics
determines and is determined by an oriented line subbundle of $S^2T^*M$ whose
positive sections are the riemannian metrics in the conformal class. Writing
this line subbundle as $\Ln^2:=\Ln\otimes \Ln$ (with $\Ln$ also oriented), it
is thus equivalently a bundle metric $c$ on $\Ln\otimes TM$ and the volume
form of this bundle metric identifies $\Ln^4$ with $\Wedge^4T^*M$. A metric in
the conformal class may be written $g=\ell^{-2}c$ for a positive section
$\ell$ of the line bundle $L=\Ln^*$; such an $\ell$ is called a \emph{length
scale}.

Any connection on $TM$ induces a connection on $L=(\Wedge^4TM)^{1/4}$; for
example, the Levi-Civita connection $D^g$ of $g=\ell^{-2}c$ induces the unique
connection (also denoted $D^g$) on $L$ with $D^g\ell =0$. A connection $D$ on
$TM$ is said to be \emph{conformal} if $Dc=0$.  It is well known (see
e.g.~\cite{CP}) that taking the induced connection on $L$ is an affine
bijection from the affine space of torsion-free conformal connections on $TM$
(the \emph{Weyl connections}) to the affine space of connections on $L$
(modelled on $\Omega^1(M)$).

If $J$ is hermitian with respect to $c$, the connection $D^g+\theta^g$ on $L$
is independent of the choice of metric $g=\ell^{-2}c$ in the conformal class.
Equation~\eqref{DJ} then has the interpretation that $D^J$ is the unique
torsion-free conformal connection with $D^J J=0$, while $\d\theta^g$ is the
curvature of the corresponding connection on $L$.
\end{rem}

In view of this remark, we will find it more natural in this paper to view a
hermitian structure as a pair $(c,J)$ where $c$ is a \emph{conformal metric}
as above, and $J$ is a complex structure which is orthogonal with respect to
$c$ (i.e., $c(J\cdot,J\cdot)=c(\cdot,\cdot)$). We refer to $(M,c,J)$ as a
hermitian complex surface. A \emph{compatible} hermitian metric is then given
by a metric $g=\ell^{-2}c$ in the corresponding conformal class.

\subsection{Conformal curvature in hermitian geometry}

If $(M,c)$ is an oriented conformal $4$-manifold, then the curvature of $c$,
measured by the \emph{Weyl tensor} $W\in \Omega^2(M,\mathrm{End}(TM))$,
decomposes into a sum of \emph{half-Weyl tensors} $W=W^+ + W^-$ called the
\emph{selfdual} and \emph{antiselfdual} Weyl tensors, which have the property
that for $g=\ell^{-2}c$, $(V\wedge W,X\wedge Y)\mapsto g(W^\pm_{V,W}X,Y)$ is a
section $W^\pm_g$ of $S^2_0(\Wedge^\pm M)\subset \Wedge^2T^*M\otimes
\Wedge^2T^*M$, where $S^2_0$ denotes the symmetric tracefree square. A
half-Weyl tensor $W^\pm$ is said to be \emph{degenerate} iff $W^\pm_g$ is a
pointwise multiple of $(\omega^\pm\otimes\omega^\pm)_0$ for a section
$\omega^\pm$ of $\Wedge^\pm M$, where $(\cdot)_0$ denotes the tracefree
part---equivalently, the corresponding endomorphism of $\Wedge^\pm M$ has
degenerate spectrum.

If $(M,c,J)$ is hermitian, with the complex orientation, then (with respect to
any compatible metric $g=\ell^{-2}c$) the selfdual Weyl tensor has the form
\begin{equation*}
W^+_g = \tfrac 18 \kappa^g \,(\omega^g\otimes\omega^g)_0 + J(\d\theta^g)_+\odot
\omega^g,
\end{equation*}
for a function $\kappa^g$, where $J(\d\theta^g)_+(X,Y)=(\d\theta^g)_+(JX,Y)$,
$(\d\theta^g)_+$ denotes the selfdual part, and $\odot$ denotes symmetric
product.

\begin{prop}\label{p:W-degen} If $(c,J)$ admits a compatible K\"ahler metric,
or more generally~\cite{AG1} a compatible metric $g=\ell^{-2}c$ with
$J$-invariant Ricci tensor $\rc{g}$, then $W^+$ is degenerate.
\end{prop}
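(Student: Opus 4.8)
The plan is to read off degeneracy directly from the displayed formula $W^+_g = \tfrac18\kappa^g(\omega^g\otimes\omega^g)_0 + J(\d\theta^g)_+\odot\omega^g$, working with the compatible metric $g$ furnished by the hypothesis (degeneracy of $W^+$ is a property of the conformal class, so any one compatible metric suffices). The first summand is already a pointwise multiple of $(\omega^g\otimes\omega^g)_0$: its associated endomorphism of $\Wedge^+M$ equals $\tfrac43$ on $\R\omega^g$ and $-\tfrac23$ on the orthogonal complement, so by itself it has degenerate spectrum. Everything therefore hinges on the off-diagonal term. As a warm-up I would dispose of the K\"ahler case, where $\theta^g=0$ forces $\d\theta^g=0$, the off-diagonal term vanishes identically, and $W^+_g=\tfrac18\kappa^g(\omega^g\otimes\omega^g)_0$ is manifestly degenerate.

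Next I would decompose $(\d\theta^g)_+$, a section of $\Wedge^+M$, into its $J$-invariant part (a function times $\omega^g$) and its anti-$J$-invariant part $\psi$ lying in the rank-two, real $(0,2)+(2,0)$ summand of $\Wedge^+M$. A short check shows that the operation $\alpha\mapsto\alpha(J\cdot,\cdot)$ sends the $J$-invariant part to a symmetric tensor (a multiple of $g$), which contributes nothing in $S^2_0(\Wedge^+M)$, and sends $\psi$ to another anti-$J$-invariant $2$-form $J\psi$ orthogonal to $\omega^g$; thus the off-diagonal term reduces to $(J\psi)\odot\omega^g$. As an endomorphism of $\Wedge^+M$ this couples $\R\omega^g$ to the line spanned by $J\psi$ and annihilates the remaining direction. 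Hence $W^+_g$ is degenerate as soon as $\psi=0$: in that case $\omega^g$ is an eigenform and the two eigenvalues on its orthogonal complement both equal $-\tfrac1{12}\kappa^g$.

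The heart of the matter is therefore the implication: $\rc{g}$ is $J$-invariant $\Rightarrow$ $\psi=(\d\theta^g)_+^{0,2}=0$. I would establish this from the hermitian curvature identity of \cite{AG1} that relates the anti-$J$-invariant, selfdual part $\psi$ of $\d\theta^g$ to the anti-$J$-invariant part of $\rc{g}$. The reason such a clean, purely linear relation exists is worth isolating, and is the step I would check most carefully: differentiating \eqref{DJ} to express the curvature acting on $\omega^g$ produces, besides the linear term in $\d\theta^g$, only quadratic expressions built from $\theta^g$ and $J\theta^g$, of the shape $J\theta^g\wedge\theta^g$ and $|\theta^g|^2\omega^g$, and one checks directly that all of these are $J$-invariant, hence contribute nothing to the anti-$J$-invariant component. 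Thus $\psi$ is controlled by $\d\theta^g$ alone, and the (first) Bianchi identity matches it, up to a universal constant, to the anti-$J$-invariant part of $\rc{g}$.

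Feeding $\psi=0$ back into the formula leaves $W^+_g=\tfrac18\kappa^g(\omega^g\otimes\omega^g)_0$, a pointwise multiple of $(\omega^g\otimes\omega^g)_0$, so $W^+$ is degenerate by definition, proving the proposition (the K\"ahler hypothesis being the special case $\theta^g=0$ already handled). I expect the one genuinely substantial point to be the curvature identity $\psi\propto$ (anti-$J$-invariant part of $\rc{g}$): the cancellation of the quadratic $\theta^g$-terms in the relevant component is exactly what makes the relation hold on the nose, and everything else is structural linear algebra on $\Wedge^+M$.
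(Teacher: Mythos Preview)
The paper does not actually prove this proposition: it is stated immediately after the displayed formula for $W^+_g$ and attributed to \cite{AG1}. Your proposal is a faithful reconstruction of the argument behind that citation, and it is correct.

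One simplification: the decomposition of $(\d\theta^g)_+$ into a $J$-invariant piece $a\,\omega^g$ and an anti-$J$-invariant piece $\psi$ is unnecessary, because the $J$-invariant piece is automatically zero. Indeed, $\theta^g\wedge\omega^g = -\tfrac12\,\d\omega^g$, so
\[
\d\theta^g\wedge\omega^g \;=\; \d(\theta^g\wedge\omega^g) + \theta^g\wedge\d\omega^g
\;=\; -\tfrac12\,\d^2\omega^g - 2\,\theta^g\wedge\theta^g\wedge\omega^g \;=\; 0,
\]
whence $\langle(\d\theta^g)_+,\omega^g\rangle_g=0$ pointwise. Thus $(\d\theta^g)_+=\psi$ lies entirely in the anti-$J$-invariant part of $\Wedge^+M$, the displayed formula already reads $W^+_g=\tfrac18\kappa^g(\omega^g\otimes\omega^g)_0 + J(\d\theta^g)_+\odot\omega^g$ with the second factor in $(\omega^g)^\perp\subset\Wedge^+M$, and no projection argument is needed. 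The substantive step---that the anti-$J$-invariant part of $\rc{g}$ corresponds, via $J$, to a nonzero multiple of $(\d\theta^g)_+$---is precisely what is established in \cite{AG1}, and your outline of how to extract it from \eqref{DJ} (quadratic $\theta^g$-terms such as $J\theta^g\wedge\theta^g$ and $|\theta^g|^2\omega^g$ being $J$-invariant) is on target.
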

This is a riemannian analogue of the Goldberg--Sachs theorem in
relativity~\cite{GS,PR}. For Einstein metrics, more information is
available~\cite{PB,Nurowski,De,AG1,LeBrun1}.
\begin{prop}\label{p:gs} For an oriented conformal $4$-manifold $(M,c)$ with a
compatible Einstein metric $g=\ell^{-2}c$, the following three conditions are
equivalent\textup:
\begin{bulletlist}
\item the half-Weyl tensor $W^+$ of $c$ is degenerate\textup;
\item every point of $(M,c)$ has a neighbourhood with a hermitian complex
structure $J$\textup;
\item every point of $(M,c)$ has a neighbourhood on which either $W^+$ is
identically zero or there is a complex structure $J$ for which $\hat
g=\smash{|W^+|^{2/3}_g} g$ is a K\"ahler metric.
\end{bulletlist}
\end{prop}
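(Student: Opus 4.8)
The plan is to prove the cycle (iii)~$\Rightarrow$~(ii)~$\Rightarrow$~(i)~$\Rightarrow$~(iii), of which only the final implication carries real content. For (iii)~$\Rightarrow$~(ii), note that one of the two alternatives in (iii) holds near any given point. If $W^+\equiv 0$ on a neighbourhood, then local $c$-orthogonal complex structures exist by the Atiyah--Hitchin--Singer integrability theorem for antiselfdual conformal structures. If instead there is a complex structure $J$ for which $\hat g=|W^+|^{2/3}_g g$ is K\"ahler, then $J$ is orthogonal for the conformal class of $\hat g$, which is that of $c$; thus $(c,J)$ is hermitian. For (ii)~$\Rightarrow$~(i) it suffices to apply Proposition~\ref{p:W-degen}: as $g$ is Einstein, $\rc{g}=\tfrac14\s{g}\,g$ is $J$-invariant, so $W^+$ is degenerate.

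So the substance is (i)~$\Rightarrow$~(iii). Fix a point; if $W^+$ vanishes identically nearby we are in the first alternative, so I restrict to the open set $U=\{W^+\neq 0\}$. On $U$ degeneracy means that the tracefree symmetric operator $W^+_g$ on $\Wedge^+M$ has eigenvalues $-\lambda,-\lambda,2\lambda$ with $\lambda>0$; its $2\lambda$-eigenline is spanned by a selfdual $2$-form which, normalised to square-norm $2$, defines (locally, up to sign) an almost complex structure $J$, so that $W^+_g=\tfrac18\kappa^g(\omega^g\otimes\omega^g)_0$ with $\kappa^g$ a fixed multiple of $\lambda$. It remains to show that $J$ is integrable and that the precise rescaling $\hat g=|W^+|^{2/3}_g g$ is K\"ahler for $J$.

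Both steps exploit the fact that an Einstein metric has harmonic Weyl tensor: the contracted second Bianchi identity together with $\rc{g}=\tfrac14\s{g}g$ gives $\delta^g W^+=0$. Integrability of $J$ is the Riemannian Goldberg--Sachs theorem~\cite{PB,Boyer,Nurowski,AG1}: substituting the degenerate form of $W^+_g$ into $\delta^g W^+=0$ forces the almost complex structure $J$ to be integrable, making $(c,J)$ hermitian, after which the formula for $W^+_g$ displayed before Proposition~\ref{p:W-degen} applies. Comparing that formula with the degenerate form shows $(\d\theta^g)_+=0$, and the residual content of $\delta^g W^+=0$ reduces to a first-order identity exhibiting $\theta^g$ as a constant multiple of $\d\log|W^+|_g$; the constant is exactly the one for which, taking $f=|W^+|_g^{-1/3}$ so that $\hat g=f^{-2}g=|W^+|^{2/3}_g g$, the conformal law $\theta^{\hat g}=\theta^g+\d\log f$ yields $\theta^{\hat g}=0$. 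Hence $\hat g$ is K\"ahler, and the exponent $2/3$ is precisely what this constant forces (Derdzi\'nski's theorem~\cite{De}). The main obstacle is the integrability step: producing the almost complex structure from the degenerate spectrum is routine, but proving it integrable genuinely requires the harmonicity $\delta^g W^+=0$ and the Goldberg--Sachs argument, whereas identifying the K\"ahler conformal factor is then only a matter of pinning down the one constant above.
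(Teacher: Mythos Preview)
Your approach is essentially the same as the paper's: both rest on the Riemannian Goldberg--Sachs theorem for integrability and on Derdzi\'nski's theorem for the K\"ahler rescaling. The paper is terser---it simply quotes (i)~$\Leftrightarrow$~(ii) as Goldberg--Sachs and (i)~$\Rightarrow$~(iii) as Derdzi\'nski---whereas you run the cycle (iii)~$\Rightarrow$~(ii)~$\Rightarrow$~(i)~$\Rightarrow$~(iii), obtaining (ii)~$\Rightarrow$~(i) from Proposition~\ref{p:W-degen} applied to the Einstein metric. That is a perfectly legitimate variant.

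There is, however, one genuine omission in your (i)~$\Rightarrow$~(iii). You write ``Fix a point; if $W^+$ vanishes identically nearby we are in the first alternative, so I restrict to the open set $U=\{W^+\neq 0\}$.'' This dichotomy is not automatic: a priori there could be a point $p$ with $W^+(p)=0$ but $W^+$ not identically zero on any neighbourhood of $p$. At such a $p$ neither alternative in (iii) would apply, since $|W^+|^{2/3}_g g$ degenerates there. The paper's proof closes this gap by explicitly invoking the other half of Derdzi\'nski's result: for an Einstein metric with degenerate $W^+$, on each connected component $W^+$ either vanishes identically or has \emph{no} zero. This uses $\delta^g W^+=0$ together with the special algebraic form of a degenerate half-Weyl tensor, and is what rules out boundary points of the zero set. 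You cite Derdzi\'nski only for the exponent $2/3$; you should also invoke his dichotomy before restricting to $U$.
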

\begin{proof} The equivalence of the first two conditions is the riemannian
Goldberg--Sachs theorem~\cite{PB,Boyer,Nurowski,AG1}. Derdzi\'nski~\cite{De}
shows: if a half-Weyl tensor $W^\pm$ is degenerate, then on each connected
component of $M$ it either vanishes identically or has no zero (hence has two
distinct eigenvalues, one simple and one of multiplicity two); in the latter
case $\smash{|W^+|^{2/3}_g} g$ is a K\"ahler metric.  If $W^+$ is identically
zero on an open set $U$, there exist hermitian complex structures on a
neighbourhood of any point in $U$.
\end{proof}

\subsection{The Bach tensor}\label{s:bach}

The \emph{Bach tensor} $B$ of a $4$-dimensional conformal metric is a co-closed
tracefree section $B$ of $L^{-2}\otimes S^2T^*M$ which is the gradient of the
$L_2$-norm $\int_M |W|_c^2$ of the Weyl tensor under compactly supported
variations of the conformal metric $c$. For any compatible riemannian metric
$g=\ell^{-2}c$, $B^g=\ell^2B$ is a symmetric bilinear form on $TM$ defined by
the well-known expressions~\cite{Besse,ACG}
\begin{equation} \label{bach-general}
B^g = \delta^g \delta^g W + \tfrac12 W *_g \rc{g}_0
= 2\delta^g \delta^g W^\pm + W^\pm *_g \rc{g}_0,
\end{equation}
where we use the action of Weyl tensors $W$ (or $W^\pm$) on symmetric bilinear
forms $b$ given by $(W *_g b)(X,Y) = \sum_{i=1}^4 b(W_{X, e_i} Y, e_i)$ where
$\{e_i\}$ is a $g$-orthonormal frame.  Here $\rc{g}_0=\rc{g}-\frac14\s{g}\, g$
is the tracefree part of the Ricci tensor; the trace part does not
contribute. It immediately follows from~\eqref{bach-general} that if $W^+$ or
$W^-$ is identically zero then $c$ is \emph{Bach-flat} (i.e., $B$ is
identically zero).

The conformal invariance of $B$ implies that $B^{f^{-2}g}= f^2B^g$, while the
second Bianchi identity implies $\delta^gW=-\frac12 d^{D^g}(\rc{g}-\frac16
\s{g}\,g)$ (as $T^*M$-valued $2$-forms). Thus $c$ is also Bach-flat if it has
a compatible Einstein metric.

If $J$ is a complex structure compatible with the chosen orientation and $\hat
g$ is K\"ahler with respect to $J$, then $W^+ = \frac{1}{8}\s{\hat
g}(\omega^{\hat g} \otimes\omega^{\hat g})_0$, and the Bach tensor is easily
computed by using~\eqref{bach-general}: if $B^{\hat g,+}$ and $B^{\hat g,-}$
denote the $J$-invariant and $J$-anti-invariant parts of $B^{\hat g}$,
respectively, then (see \cite{De})
\begin{equation*}
B^{\hat g, +} = \tfrac16 (2D^+ \d\s{\hat g} + \rc{\hat g}\, \s{\hat g})_0,
\qquad
B^{\hat g, -} = - \tfrac16 D^- \d\s{\hat g},
\end{equation*}
where, for any real function $f$, $D^+ \d f$, resp. $D^- \d f$, denotes the
$J$-invariant part, resp. the $J$-anti-invariant part, of the Hessian $D^{\hat
g} \d f$ of $f$ with respect to $\hat g$, and $b_0$ denotes the tracefree part
of a bilinear form $b$. Hence the following hold~\cite{De,LeBrun1,AG1}.

\begin{prop}\label{p:bach} Let $(\hat g, J)$ be K\"ahler and let
$g= \smash{\s{\hat g}^{-2}} \, \hat g$ \textup(defined wherever $\s{\hat g}$
is nonzero\textup). Then\textup:
\begin{bulletlist}
\item $(\hat g, J)$ is \emph{extremal} \textup(i.e., $J\grad_{\hat g} \s{\hat
g}$ is a Killing vector field\textup) iff $B^{\hat g}$ is
$J$-invariant\textup;
\item $\delta^g W^+=0$ wherever $g$ is defined, and hence $B^g=W^+ *_g
\rc{g}_0$, i.e., $B^{\hat g}=\frac12 \rc{g}_0\, \s{\hat g}$\textup;
\item $g$ is an Einstein metric, wherever it is defined, iff $B^{\hat g}$ is
identically zero there.
\end{bulletlist}
\end{prop}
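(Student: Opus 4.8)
The plan is to read off everything from the formulas for the $J$-invariant and $J$-anti-invariant parts of $B^{\hat g}$ displayed just above the proposition, since all three bullets are essentially direct consequences. For the first bullet, I would start from
\[
B^{\hat g,-}=-\tfrac16 D^-\d\s{\hat g},
\]
and observe that $B^{\hat g}$ is $J$-invariant precisely when this anti-invariant part vanishes, i.e.\ when $D^-\d\s{\hat g}=0$. The key identity to invoke is the standard fact that, for a K\"ahler metric, the $J$-anti-invariant part of the Hessian of a function $f$ measures exactly the failure of $J\grad_{\hat g}f$ to be Killing; more precisely $D^-\d f=0$ is equivalent to $J\grad_{\hat g}f$ being a (holomorphic) Killing field. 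Applying this with $f=\s{\hat g}$ gives the extremality criterion, recovering Calabi's characterization. I expect this to be the cleanest of the three parts.

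For the second bullet, I would use the formula $W^+=\tfrac18\s{\hat g}(\omega^{\hat g}\otimes\omega^{\hat g})_0$ valid for K\"ahler $\hat g$, and substitute the conformal rescaling $g=\s{\hat g}^{-2}\hat g$. The claim $\delta^g W^+=0$ should follow from the conformal behaviour of $\delta W$ together with the fact that $W^+$ for $g$ is a pointwise multiple of $(\omega^g\otimes\omega^g)_0$ with the multiple being constant in the appropriate conformal weight (this is the degeneracy from Proposition~\ref{p:W-degen}, refined by the choice of scale $\s{\hat g}^{-1}$, which is precisely the scale making $W^+$ divergence-free). Then the first equality in~\eqref{bach-general}, specialized to the plus sign, collapses to $B^g=W^+*_g\rc{g}_0$ once $\delta^g\delta^g W^+=0$. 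Finally I would translate back using the conformal rule $B^{f^{-2}g}=f^2B^g$ with $f=\s{\hat g}^{-1}$, together with the explicit action of $W^+$ (a multiple of $\omega^g\otimes\omega^g$) on $\rc{g}_0$, to arrive at $B^{\hat g}=\tfrac12\rc{g}_0\,\s{\hat g}$.

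The third bullet is then immediate: $B^{\hat g}=\tfrac12\rc{g}_0\,\s{\hat g}$ vanishes (where $g$ is defined, i.e.\ where $\s{\hat g}\neq0$) if and only if $\rc{g}_0=0$, which is exactly the Einstein condition on $g$. So this part requires no further work beyond the second bullet, apart from noting that $\s{\hat g}\neq0$ on the domain of definition of $g$.

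The main obstacle will be the bookkeeping in the second bullet: getting the conformal weights and the numerical constant $\tfrac12$ correct when passing between $B^g$ and $B^{\hat g}$, and verifying carefully that $\delta^g W^+=0$ for the specific scale $g=\s{\hat g}^{-2}\hat g$ rather than an arbitrary conformal representative. This is the one place where the precise form of the degeneracy of $W^+$ and the conformal covariance of the divergence operator must be combined, and where a sign or factor error is most likely to creep in. Once $\delta^g W^+=0$ is secured, the reduction of~\eqref{bach-general} and the evaluation of $W^+*_g\rc{g}_0$ are routine, and the first and third bullets follow with little extra effort.
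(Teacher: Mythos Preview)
Your proposal is correct and matches the paper's treatment: the paper does not give a separate proof of this proposition, but simply records it as a consequence of the displayed formulas for $B^{\hat g,\pm}$, with citations to Derdzi\'nski, LeBrun, and Apostolov--Gauduchon. Your sketch fleshes out exactly this derivation, and your identification of the second bullet (specifically, verifying $\delta^g W^+=0$ for the particular scale $g=\s{\hat g}^{-2}\hat g$) as the only place requiring care is apt---this is precisely the Derdzi\'nski computation being cited.
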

Thus away from zeros of $\s{\hat g}$, $\hat g$ is extremal iff $\rc{g}$ is
$J$-invariant; this generalizes.
\begin{prop}\label{p:J-inv-ric} Let $(\hat g, J)$ be K\"ahler and suppose
$g=\varphi^{-2}\hat g$ has $J$-invariant Ricci tensor. Then $J\grad_{\hat
g}\varphi$ is a Killing vector field with respect to both $g$ and $\hat g$.
\end{prop}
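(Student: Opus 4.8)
The plan is to reduce the hypothesis ``$\rc{g}$ is $J$-invariant'' to a single pointwise linear condition on the Hessian of $\varphi$, and then extract the Killing property from the Kähler identities. First I would expand the conformal transformation law for the Ricci tensor. Writing $g=\varphi^{-2}\hat g=e^{2\sigma}\hat g$ with $\sigma=-\log\varphi$, the dimension-four formula reads $\rc{g}=\rc{\hat g}-2\bigl(D^{\hat g}\d\sigma-\d\sigma\otimes\d\sigma\bigr)+\lambda\,\hat g$ for a function $\lambda$ collecting the trace terms. Passing to $J$-anti-invariant parts, both the pure-trace term $\lambda\,\hat g$ and $\rc{\hat g}$ drop out, the former because $\hat g$ is $J$-invariant, the latter because $\hat g$ is Kähler, leaving $(\rc{g})^-=-2\bigl((D^{\hat g}\d\sigma)^--(\d\sigma\otimes\d\sigma)^-\bigr)$, where $(\cdot)^-$ denotes the $J$-anti-invariant part. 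Substituting $\d\sigma=-\varphi^{-1}\d\varphi$, so that $D^{\hat g}\d\sigma=\varphi^{-2}\,\d\varphi\otimes\d\varphi-\varphi^{-1}D^{\hat g}\d\varphi$ and $\d\sigma\otimes\d\sigma=\varphi^{-2}\,\d\varphi\otimes\d\varphi$, the quadratic pieces cancel exactly and one is left with the clean identity
\[
(\rc{g})^- = \tfrac{2}{\varphi}\,D^-\d\varphi,
\]
where $D^-\d\varphi$ is the $J$-anti-invariant part of the Hessian $D^{\hat g}\d\varphi$. Hence the assumption that $\rc{g}$ is $J$-invariant is equivalent to $D^-\d\varphi=0$, i.e.\ the Hessian of $\varphi$ with respect to $\hat g$ is $J$-invariant. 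This cancellation is the crux of the argument and the step I expect to require the most care to set up correctly.

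Next I would show that $J$-invariance of the Hessian $D^{\hat g}\d\varphi$ forces $\xi:=J\grad_{\hat g}\varphi$ to be $\hat g$-Killing. Since $(\hat g,J)$ is Kähler, $J$ is parallel, so $D^{\hat g}_X\xi=J\,D^{\hat g}_X\grad_{\hat g}\varphi$, and therefore $\hat g(D^{\hat g}_X\xi,Y)=-(D^{\hat g}\d\varphi)(X,JY)$. The symmetric part of $X\mapsto D^{\hat g}_X\xi$, whose vanishing is precisely the Killing equation, is then $-\bigl((D^{\hat g}\d\varphi)(X,JY)+(D^{\hat g}\d\varphi)(Y,JX)\bigr)$. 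Using symmetry of the Hessian together with its $J$-invariance $(D^{\hat g}\d\varphi)(JX,JY)=(D^{\hat g}\d\varphi)(X,Y)$, these two terms are negatives of one another, so they cancel and $\mathcal{L}_\xi\hat g=0$.

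Finally I would promote this to $g$. Because $\hat g(V,JV)=0$ for every vector field $V$ by $J$-orthogonality, we obtain $\xi\cdot\varphi=\hat g(\grad_{\hat g}\varphi,J\grad_{\hat g}\varphi)=0$, so $\xi$ preserves $\varphi$. Consequently $\mathcal{L}_\xi g=\mathcal{L}_\xi(\varphi^{-2}\hat g)=\varphi^{-2}\mathcal{L}_\xi\hat g-2\varphi^{-3}(\xi\cdot\varphi)\,\hat g=0$, so $\xi=J\grad_{\hat g}\varphi$ is Killing for $g$ as well. The only genuinely computational point is the conformal Ricci transformation in the first paragraph; the remaining two steps are short consequences of the Kähler condition and of conformal invariance.
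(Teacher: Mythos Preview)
Your argument is correct and follows the same route as the paper: you establish the identity $(\rc{g})^- = 2\varphi^{-1}D^-\d\varphi$ (the paper records the equivalent tracefree formulation $\rc{g}_0=\rc{\hat g}_0 + 2\varphi^{-1}(D^{\hat g}\d\varphi)_0$), deduce that the Hessian of $\varphi$ is $J$-invariant, and then read off the Killing property. The only difference is that the paper leaves the last two steps implicit as standard K\"ahler facts, whereas you spell them out.
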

This follows by computing that $\rc{g}_0=\rc{\hat g}_0 + 2\varphi^{-1}(D^{\hat
g}\d\varphi)_0$.

\subsection{The Einstein--Maxwell condition}

Let $\omega_+$ and $\omega_-$ be closed (hence harmonic) selfdual and
antiselfdual $2$-forms (respectively) on an oriented riemannian $4$-manifold
$(M,g)$. Then the Einstein--Maxwell condition in general relativity has a
riemannian analogue in which the traceless Ricci tensor $\rc{g}_0$ satisfies
\begin{equation}\label{EM}
\rc{g}_0(X,Y) = c\, g(\omega_+(X), \omega_-(Y)) 
\end{equation}
for constant $c$~\cite{LeBrun3}. If $c=0$, $g$ is Einstein, while in general,
the right hand side is divergence-free, and so~\eqref{EM} implies
$\delta^g\rc{g}_0=0$, or equivalently, by the contracted Bianchi identity, $g$
is a CSC metric. A converse is available when $g$ is conformal to a K\"ahler
metric $(\hat g,J)$ with K\"ahler form $\omega^{\hat g}=\omega_+$
(cf.~\cite{LeBrun3} for the case $g=\hat g$).

\begin{prop}\label{p:einstein-maxwell} Let $(M,g,J)$ be a hermitian
$4$-manifold with $g$ conformal to a K\"ahler metric $(\hat g,\omega^{\hat
g})$.  Then $g$ satisfies the Einstein--Maxwell equation \eqref{EM}, for some
$\omega_\pm$ with $\d\omega_-=0$ and $\omega_+= \omega^{\hat g}$, iff $g$ is a
CSC metric with $J$-invariant Ricci tensor.
\end{prop}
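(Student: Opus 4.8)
The plan is to treat the two implications separately: the forward direction follows quickly from the curvature identities already assembled, while the entire substance of the converse is a single closedness statement, which the constant-scalar-curvature hypothesis is designed to supply.

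\emph{Einstein--Maxwell implies CSC with $J$-invariant Ricci.} Suppose \eqref{EM} holds with $\omega_+=\omega^{\hat g}$ and $\d\omega_-=0$. As observed just before the statement, the right-hand side of \eqref{EM} is divergence-free, so $\delta^g\rc{g}_0=0$ and the contracted Bianchi identity forces $\s{g}$ to be constant. For the $J$-invariance, recall from Section~1.1 that $\Wedge^-M$ consists of the $J$-invariant $2$-forms orthogonal to $\omega^g$; hence both $\omega_+=\omega^{\hat g}$ and $\omega_-$, viewed as skew endomorphisms via $g$, commute with $J$. Their product is then symmetric and commutes with $J$, so $(X,Y)\mapsto g(\omega_+(X),\omega_-(Y))$ is a symmetric $J$-invariant bilinear form; by \eqref{EM} the same holds for $\rc{g}_0$, and therefore for $\rc{g}$.

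\emph{Construction of $\omega_-$ in the converse.} Now assume $g=\varphi^{-2}\hat g$ is CSC with $J$-invariant $\rc{g}$. Write $\rc{g}_0=g(R_0\cdot,\cdot)$ with $R_0$ the tracefree symmetric $J$-commuting Ricci endomorphism, and note that $\omega_+=\omega^{\hat g}$ corresponds to the $g$-endomorphism $\varphi^2 J$. Since $\varphi^2 J$ is invertible, the algebraic equation $\rc{g}_0(X,Y)=g(\omega_+(X),\omega_-(Y))$ has a unique solution among skew $J$-commuting endomorphisms, namely the $2$-form
\[
\omega_-=\varphi^{-2}\,\rc{g}_0(J\cdot,\cdot).
\]
Because $R_0$ is tracefree and commutes with $J$, this $\omega_-$ is $J$-invariant and orthogonal to $\omega^g$, i.e.\ a section of $\Wedge^-M$; by construction \eqref{EM} holds with $c=1$. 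It therefore remains only to prove $\d\omega_-=0$, and this is where $\s{g}=\mathrm{const}$ must enter.

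\emph{Closedness of $\omega_-$, and the main obstacle.} Since $\omega_-$ is antiselfdual, $*_g\omega_-=-\omega_-$, whence $\delta^g\omega_-=*_g\d\omega_-$ and $\d\omega_-=0\Leftrightarrow\delta^g\omega_-=0$. I would compute $\delta^g\omega_-$ directly, using that CSC gives $\delta^g\rc{g}_0=0$ and that the Lee form of $(g,J)$ is $\theta^g=\d\log\varphi$ (because $\hat g=\varphi^2 g$ is K\"ahler, so $\theta^{\hat g}=\theta^g-\d\log\varphi=0$). Expressing $D^gJ$ through $\theta^g$ via \eqref{DJ}, the codifferential $\delta^g\bigl(\rc{g}_0(J\cdot,\cdot)\bigr)$ equals $\pm(\delta^g\rc{g}_0)\circ J$ up to correction terms involving $\theta^g$ and $\rc{g}_0$; the leading piece vanishes, and the weight $\varphi^{-2}$ is chosen precisely so that the extra term $-\iota_{\grad^g\varphi^{-2}}\,\rc{g}_0(J\cdot,\cdot)$ arising from $\delta^g(\varphi^{-2}\,\cdot)$ cancels these corrections, since $\grad^g\varphi^{-2}=-2\varphi^{-2}\grad^g\log\varphi$ is $-2\varphi^{-2}$ times the $g$-dual of $\theta^g$. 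Equivalently, one may pass to $\hat g$ (where $J$ is parallel) and, using $\rc{g}_0=\rc{\hat g}_0+2\varphi^{-1}(D^{\hat g}\d\varphi)_0$ from Proposition~\ref{p:J-inv-ric}, rewrite $\omega_-$ as a combination, with function coefficients, of the closed forms $\rho^{\hat g}(X,Y)=\rc{\hat g}(JX,Y)$, $\d\d^c\varphi$, and $\omega^{\hat g}$; the exterior derivative should then collapse to a multiple of $\d\s{g}\wedge\omega^{\hat g}$. The main obstacle is exactly this last reduction: organizing the conformal change-of-Ricci formula together with the Bianchi identity so that every term not proportional to $\d\s{g}$ cancels, giving $\d\omega_-=0\Leftrightarrow\d\s{g}=0$. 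As a consistency check, when $\varphi\equiv1$ (so $g=\hat g$ is K\"ahler) the statement reduces to closedness of the primitive Ricci form $\rho^{\hat g}-\tfrac14\s{\hat g}\,\omega^{\hat g}$, which holds iff $\hat g$ has constant scalar curvature, recovering the case $g=\hat g$ of \cite{LeBrun3}.
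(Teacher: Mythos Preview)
Your forward direction is correct and essentially the paper's argument. The gap is in the converse: you have the right candidate $\omega_-=\varphi^{-2}\,\rc{g}_0(J\cdot,\cdot)$ and you correctly reduce the question to $\delta^g\omega_-=0$, but you do not actually carry out the cancellation. Saying that ``the weight $\varphi^{-2}$ is chosen precisely so that \ldots\ cancels these corrections'' and that ``the exterior derivative should then collapse to a multiple of $\d\s{g}\wedge\omega^{\hat g}$'' are hopes, not computations, and you explicitly label this step ``the main obstacle''. As written, the converse is unproved.

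The paper dispatches this in one stroke by working in $\hat g$ rather than $g$, where two simplifications occur simultaneously. First, $\omega^{\hat g}$ is $D^{\hat g}$-parallel, so the $\hat g$-codifferential of $\omega_-=\omega^{\hat g}(f^4\,\mathrm{Ric}^g_0(\cdot),\cdot)$ (here $\hat g=f^{-2}g$, i.e.\ $f=\varphi^{-1}$) reduces to the $\hat g$-divergence of the symmetric traceless tensor $f^4\,\mathrm{Ric}^g_0$; no Lee-form correction terms appear. Second, the divergence on trace-free symmetric $2$-tensors of conformal weight $-4$ is conformally invariant, so $\delta^{\hat g}(f^4\,\mathrm{Ric}^g_0)=f^6\,\delta^g\mathrm{Ric}^g_0$. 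Hence $\omega_-$ is co-closed (equivalently closed, being antiselfdual) iff $\delta^g\rc{g}_0=0$, i.e.\ iff $g$ is CSC. This single weight argument replaces your projected term-by-term cancellation and in fact handles both implications at once. Your direct $\delta^g$-computation would eventually succeed, but the conformal-invariance trick is the missing idea that makes the step immediate.
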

\begin{proof} Clearly, \eqref{EM} implies that $\rc{g}$ is $J$-invariant.
Writing $\hat g = f^{-2} g$, \eqref{EM} with $\omega_+=\omega^{\hat g}$, is
then equivalent to $\omega_+(f^4\,\mathrm{Ric}^g_0(\cdot),\cdot)$ being a
constant multiple of $\omega_-$, where
$\rc{g}_0(X,Y)=g(\mathrm{Ric}^g_0(X),Y)$.  Thus we require that $\omega^{\hat
g}(f^4\,\mathrm{Ric}^g_0(\cdot),\cdot)$ is closed, or equivalently co-closed.
However, the conformal invariance of the divergence on symmetric traceless
tensors of weight $-4$ implies that $\delta^{\hat g}(f^4 \,\mathrm{Ric}^g_0) =
f^6\,\delta^g\mathrm{Ric}^g_0$.  Hence (since $\omega^{\hat g}$ is $D^{\hat
g}$-parallel)~\eqref{EM} holds iff $\rc{g}_0$ is $J$-invariant and
divergence-free.
\end{proof}

\section{Ambik\"ahler $4$-manifolds and Einstein metrics}

\subsection{Ambihermitian and ambik\"ahler structures}

\begin{defn} Let $M$ be a $4$-manifold. An \emph{ambihermitian structure} is
a triple $(c,J_+,J_-)$ consisting of a conformal metric $c$ and two
$c$-orthogonal complex structures $J_\pm$ such that $J_+$ and $J_-$ induce
opposite orientations on $M$.\footnote{The prefix \emph{ambi-} means ``on both
sides'', often left and right: ambihermitian structures have complex
structures of either handedness (orientation); they should be contrasted (and
not confused) with \emph{bihermitian structures} where $J_\pm$ induce the
\emph{same} orientation on $M$.}

A compatible metric $g=\ell^{-2}c$ is called an \emph{ambihermitian metric} on
$(M,J_+,J_-)$ and we denote by $\omega_\pm^g$ (resp.~$\theta_\pm^g$) the
fundamental $2$-forms (resp.~the Lee forms) of the hermitian metrics
$(g,J_\pm)$. A symmetric tensor $S\in C^\infty(M,S^2T^*M)$ is \emph{diagonal}
if it is both $J_+$ and $J_-$ invariant.
\end{defn}
The following elementary and well-known observation will be used throughout.
\begin{lemma}\label{doubly-almost-complex}
Let $M$ be a $4$-manifold endowed with a pair $(J_+,J_-)$ of almost complex
structures inducing different orientations on $M$. Then $M$ admits a conformal
metric $c$ for which both $J_+$ and $J_-$ are orthogonal iff $J_+$ and $J_-$
commute.  In this case, the tangent bundle $TM$ splits as a $c$-orthogonal
direct sum
\begin{equation*}
TM = T_+M \oplus T_-M
\end{equation*}
of $J_\pm$-invariant rank $2$ subbundles $T_\pm M$ defined as the $\pm
1$-eigenbundles of $-J_+J_-$.  \textup(Thus a tangent vector $X$ belongs to
$T_{\pm}M$ iff $J_+X=\pm J_- X$.\textup)
\end{lemma}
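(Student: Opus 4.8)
The plan is to reduce the entire statement to pointwise linear algebra on the fibres $T_pM$, since $c$-orthogonality of $J_\pm$, commutativity of $J_+$ and $J_-$, and the asserted splitting are all fibrewise conditions (orthogonality being scale-invariant, so the choice of representative of the class $c_p$ is irrelevant). Thus I fix a point and work in an oriented $4$-dimensional inner product space $V=T_pM$ carrying two orthogonal complex structures $J_\pm$ of opposite orientation. There are then three things to establish: the forward implication (orthogonality plus opposite orientations forces $J_+J_-=J_-J_+$), the backward implication (if $J_+,J_-$ commute then a compatible conformal metric $c$ exists), and, assuming commutativity, the eigenbundle decomposition.

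For the forward implication, which I expect to be the crux, I would exploit the special structure of dimension four. An orthogonal $J$ with $J^2=-\Id$ satisfies $J^{-1}=-J$, and orthogonality then forces $J$ to be skew-symmetric with respect to $c$, so $J\in\mathfrak{so}(V)$; its fundamental form $\omega=c(J\cdot,\cdot)$ is self-dual or anti-self-dual according to whether $J$ induces the ambient orientation or its opposite. Orienting $V$ by the complex orientation of $J_+$, the form $\omega_+$ is self-dual while $\omega_-$ is anti-self-dual. Under the identification $\mathfrak{so}(V)\cong\Wedge^2V^*$, $A\mapsto c(A\cdot,\cdot)$, the splitting $\Wedge^2V^*=\Wedge^+\oplus\Wedge^-$ corresponds to $\mathfrak{so}(V)=\mathfrak{su}(2)_+\oplus\mathfrak{su}(2)_-$ into commuting ideals, and $J_+\in\mathfrak{su}(2)_+$, $J_-\in\mathfrak{su}(2)_-$; hence $[J_+,J_-]=0$, i.e. $J_+J_-=J_-J_+$. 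The delicate point is precisely the orientation bookkeeping that places $J_+$ and $J_-$ in different factors; this is where the opposite-orientation hypothesis enters, and if the orientations agreed both would lie in the same $\mathfrak{su}(2)$ and could fail to commute.

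For the backward implication I would average. When $J_+J_-=J_-J_+$ the endomorphisms $\{\pm\Id,\pm J_+,\pm J_-,\pm J_+J_-\}$ form a finite abelian group $G$ of order at most $8$, so for any auxiliary Riemannian metric $h$ the sum $g(X,Y)=\sum_{A\in G}h(AX,AY)$ is positive definite and $G$-invariant; in particular $g(J_\pm\cdot,J_\pm\cdot)=g$, so the conformal class $c$ of $g$ works---and commutativity is exactly what keeps $G$ finite. Finally, under commutativity, $(-J_+J_-)^2=J_+J_-J_+J_-=J_+^2J_-^2=\Id$ shows $-J_+J_-$ is an involution, so $V=T_+\oplus T_-$ splits into its $\pm1$-eigenspaces, which one checks satisfy $X\in T_\pm$ iff $J_+X=\pm J_-X$. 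Each $T_\pm$ is readily checked $J_+$-invariant (and hence, since $J_-=\pm J_+$ on $T_\pm$, also $J_-$-invariant), so is even-dimensional, while opposite orientations rule out $-J_+J_-=\pm\Id$ (which would give $J_-=\pm J_+$, of the same orientation as $J_+$), leaving both summands of rank $2$. Their $c$-orthogonality is a one-line check: for $X\in T_+$, $Y\in T_-$ one has $c(X,Y)=c(J_+X,J_+Y)=c(J_-X,-J_-Y)=-c(X,Y)$, so $c(X,Y)=0$. Transporting these fibrewise conclusions over $M$ gives the bundle statement, the sole real difficulty being the four-dimensional self-dual/anti-self-dual input used in the forward direction.
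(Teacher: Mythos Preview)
Your proof is correct. The paper itself does not prove this lemma, presenting it instead as ``elementary and well-known'' and stating it without argument; your write-up supplies exactly the kind of pointwise linear-algebra verification one would expect. The forward implication via the splitting $\mathfrak{so}(4)\cong\mathfrak{su}(2)_+\oplus\mathfrak{su}(2)_-$ into commuting ideals is the cleanest way to see why opposite orientations force commutativity, and your averaging argument for the converse, together with the eigenspace analysis of the involution $-J_+J_-$, is entirely standard and sound.
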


It follows that an ambihermitian metric $g$ is equivalently given by a pair of
commuting complex structures on $M$ and hermitian metrics on each of the
complex line subbundles $T_+M$ and $T_-M$. Also any diagonal symmetric tensor
$S$ may be written $S(X,Y)=f\, g(X,Y)+h\, g(J_+J_-X,Y)$ for functions $f,h$.

\begin{defn}\label{d:ambikahler}
An ambihermitian conformal $4$-manifold $(M,c,J_+, J_-)$ is called
\emph{ambik\"ahler} if it admits ambihermitian metrics $g_+$ and $g_-$ such
that $(g_+,J_+)$ and $(g_-, J_-)$ are K\"ahler metrics.
\end{defn}
With slight abuse of notation, we denote henceforth by $\omega_+$ and
$\omega_-$ the corresponding (symplectic) K\"ahler forms, thus omitting the
upper indices indicating the corresponding K\"ahler metrics $g_+$ and
$g_-$. Similarly we set $v_{\pm}=\tfrac12\omega_{\pm}\wedge\omega_{\pm}$.

\subsection{Type $D$ Einstein metrics and Bach-flat ambik\"ahler structures}
\label{sbachflat}

Proposition~\ref{p:W-degen} shows that ambik\"ahler structures have degenerate
half-Weyl tensors. A converse is available for $4$-dimensional Einstein
metrics with degenerate half-Weyl tensors $W^\pm$ (riemannian analogues of
Petrov type D vacuum spacetimes).

If $W^\pm$ both vanish, then $g$ has constant curvature, i.e., is locally
isometric to $S^4, {\R}^4$ or $H^4$, hence locally ambik\"ahler.  If instead
$g$ is half conformally-flat but not conformally-flat, we can assume
(reversing orientation if necessary) that $W^-=0$, $W^+\neq 0$.  Then, $W^+$
is degenerate iff $g$ is an selfdual Einstein hermitian metric (see \cite{AG2}
for a classification). In either case, the underlying conformal structure of
the Einstein metric is ambik\"ahler with respect to some hermitian structures
$J_\pm$ (see also the proof of Theorem~\ref{thm:AHE-refined} below). In the
case that $W^+$ and $W^-$ are both nonvanishing and degenerate, we may apply
Proposition~\ref{p:gs} to obtain a canonically defined ambik\"ahler
structure. The following proposition summarizes the situation.

\begin{prop}\label{p:AHE-rough}
For an oriented conformal $4$-manifold $(M,c)$ with a compatible Einstein
metric $g=\ell^{-2}c$, the following three conditions are equivalent\textup:
\begin{bulletlist}
\item both half-Weyl tensors $W^+$ and $W^-$ are degenerate\textup;
\item about each point of $M$ there exists a pair of complex
structures $J_+$ and $J_-$ such that $(c,J_+,J_-)$ is ambihermitian\textup;
\item about each point $M$ there exists a pair of complex
structures $J_+$ and $J_-$ such that $(c,J_+,J_-)$ is ambik\"ahler.
\end{bulletlist}
If $M$ is simply connected and $W^\pm$ are both nonzero, then the
compatible ambik\"ahler structure $(J_+,J_-)$ is unique \textup(up to signs
of $J_\pm$\textup) and globally defined.
\end{prop}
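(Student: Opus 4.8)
The plan is to reduce everything to Proposition~\ref{p:gs} applied to the two orientations of $(M,c)$: reversing orientation interchanges $W^+$ with $W^-$ and sends a compatible hermitian structure of one handedness to one of the opposite handedness. Since $g=\ell^{-2}c$ is Einstein, Proposition~\ref{p:gs} is available for each orientation, and its three bullets become, respectively, statements about $W^\pm$ being degenerate, about the local existence of a $c$-orthogonal complex structure $J_\pm$ inducing the $\pm$-orientation, and about the local existence of $J_\pm$ for which $g_\pm:=|W^\pm|_g^{2/3}\,g$ is K\"ahler (wherever $W^\pm\neq0$).

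For (i)$\Leftrightarrow$(ii) I would argue as follows. If both $W^\pm$ are degenerate, Proposition~\ref{p:gs} produces, near any point, a $c$-orthogonal $J_+$ inducing the given orientation and a $c$-orthogonal $J_-$ inducing the opposite one; being simultaneously orthogonal for the same conformal metric $c$, they commute by Lemma~\ref{doubly-almost-complex}, so $(c,J_+,J_-)$ is ambihermitian. Conversely, an ambihermitian structure supplies local hermitian complex structures of each handedness, whence Proposition~\ref{p:gs} (second bullet $\Rightarrow$ first) forces both $W^\pm$ to be degenerate. The implication (iii)$\Rightarrow$(ii) is immediate from Definition~\ref{d:ambikahler}, so the crux is (i)$\Rightarrow$(iii); note that together these give the chain (iii)$\Rightarrow$(ii)$\Rightarrow$(i)$\Rightarrow$(iii).

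For (i)$\Rightarrow$(iii) I would split according to the vanishing of $W^\pm$, as in the discussion preceding the proposition. If $W^+=W^-=0$ then $g$ has constant curvature and is locally $S^4,\R^4$ or $H^4$, each carrying local ambik\"ahler structures; if exactly one half-Weyl tensor vanishes, say $W^-=0\neq W^+$, then $g$ is selfdual Einstein hermitian and the classification of \cite{AG2} exhibits the ambik\"ahler structure. In the generic case $W^\pm\neq0$, the third bullet of Proposition~\ref{p:gs} (for each orientation) yields $J_\pm$ making $g_\pm=|W^\pm|_g^{2/3}g$ K\"ahler; these are conformal to $c$, induce opposite orientations, and commute by Lemma~\ref{doubly-almost-complex}, so $(c,J_+,J_-,g_\pm)$ is ambik\"ahler.

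The uniqueness and globality assertion concerns precisely this generic case. The K\"ahler metrics $g_\pm=|W^\pm|_g^{2/3}g$ are manifestly canonical and, since $W^\pm$ vanish nowhere, globally defined. By Derdzi\'nski's result \cite{De} used in Proposition~\ref{p:gs}, a nonvanishing degenerate $W^\pm_g$ has one simple eigenvalue whose eigenline in $\Wedge^\pm M$ determines the K\"ahler form $\omega_\pm$, and hence $J_\pm$, up to scale and sign; normalizing the scale via $g_\pm$ pins $J_\pm$ down up to sign. Any compatible ambik\"ahler $J_\pm$ must have its K\"ahler form proportional to this eigenline, since a conformal K\"ahler metric forces $W^\pm$ into the form $\tfrac18 s(\omega\otimes\omega)_0$; this gives local uniqueness up to sign, and simple connectedness trivializes the resulting $\Z/2$-ambiguity, permitting a single global choice of signs. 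I expect the main obstacle to be the half-conformally-flat case: when one $W^\pm$ vanishes there is no canonical eigenline and the ambik\"ahler structure is neither unique nor determined by curvature, so this case must be handled by hand (constant-curvature models and the classification in \cite{AG2}) rather than by the uniform eigenline argument covering the generic situation.
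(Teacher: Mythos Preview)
Your proposal is correct and follows essentially the same approach as the paper. The paper presents this proposition as a summary of the discussion immediately preceding it (there is no separate proof environment): it uses Proposition~\ref{p:W-degen} for the implication from ambik\"ahler to degenerate half-Weyl tensors, and for the converse performs the same case split on the vanishing of $W^\pm$ that you do, invoking Proposition~\ref{p:gs} in the generic case and deferring the half-conformally-flat case to \cite{AG2} and the proof of Theorem~\ref{thm:AHE-refined}. Your treatment is slightly more explicit in separating out condition (ii) and in spelling out the uniqueness via the simple eigenline of $W^\pm$, but the substance is the same.
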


We now characterize Einstein metrics among ambik\"ahler structures.

\begin{prop}\label{p:AK-bach-flat} Let $(M, c, J_+,J_-)$ be a connected
ambik\"ahler $4$-manifold. Then $c$ is Bach-flat iff there is a compatible
Einstein metric $g=\ell^{-2}c$ defined on a dense open subset of $M$.
\end{prop}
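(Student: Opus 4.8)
The plan is to run everything through Proposition~\ref{p:bach}, which already ties the Bach tensor of a K\"ahler metric directly to the Einstein property of a canonical conformal rescaling. Write $s_\pm$ for the scalar curvatures of the compatible K\"ahler metrics $g_\pm=\ell_\pm^{-2}c$, and recall from Section~\ref{s:bach} that conformal invariance of the Bach tensor gives $B^{g_\pm}=\ell_\pm^2\,B$; hence $c$ is Bach-flat if and only if $B^{g_+}$ (equivalently $B^{g_-}$) vanishes identically.

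For the easy implication, suppose a compatible Einstein metric $g=\ell^{-2}c$ exists on a dense open subset $U\subseteq M$. As observed in Section~\ref{s:bach}, the second Bianchi identity forces $B^g=0$ on $U$, so that $B=0$ on $U$. Since $B$ is a smooth section of $L^{-2}\otimes S^2T^*M$ defined over all of $M$ and $U$ is dense, continuity gives $B\equiv 0$, i.e.\ $c$ is Bach-flat.

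For the converse, assume $c$ is Bach-flat, so $B^{g_+}\equiv 0$. Proposition~\ref{p:bach}(iii) then says that $g:=s_+^{-2}g_+$ is Einstein wherever it is defined, namely on the open set $U_+:=\{s_+\neq 0\}$; the entire content of the converse therefore reduces to showing that $U_+$ can be arranged to be dense. The key point is that $B^{g_+}=0$ is in particular $J_+$-invariant, so Proposition~\ref{p:bach}(i) makes $(g_+,J_+)$ extremal; thus $J_+\grad_{g_+}s_+$ is Killing and $\grad_{g_+}s_+$ is real-holomorphic, i.e.\ the real part of a holomorphic vector field. If $s_+$ vanished on a nonempty open set, that holomorphic vector field would vanish there, hence identically since $M$ is connected, forcing $s_+$ to be a constant equal to $0$. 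Consequently either $U_+$ is dense, or $s_+\equiv 0$; in the latter case $W^+=\tfrac18 s_+(\omega_+\otimes\omega_+)_0=0$ and we repeat the argument verbatim with $g_-$ in place of $g_+$.

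The only remaining possibility is $s_+\equiv 0\equiv s_-$, whence $W^+=W^-=0$ and $c$ is conformally flat; then $c$ is locally conformal to a flat (hence Einstein) metric, which supplies the required compatible Einstein metric on a dense open subset. I expect the density/non-degeneracy step to be the main obstacle: the Einstein rescaling $s_+^{-2}g_+$ is only available off the zero set of $s_+$, and one must exclude that this zero set has interior. Extremality, obtained for free from Bach-flatness via Proposition~\ref{p:bach}(i), is precisely what turns this into the identity principle for holomorphic objects on the connected manifold $M$, while the conformally flat case must be disposed of separately as above.
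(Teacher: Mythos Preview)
Your proof is correct and follows essentially the same approach as the paper's own proof: both directions use Proposition~\ref{p:bach} in the same way, and the key step---ruling out that $s_\pm$ vanishes on a set with nonempty interior via unique continuation for the holomorphic gradient arising from extremality---is exactly the argument the paper gives. The conformally flat fallback case is also handled the same way.
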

\begin{proof} If $c$ is Bach-flat then by Proposition~\ref{p:bach} both of the
K\"ahler metrics $(g_+,J_+)$ and $(g_-,J_-)$ are extremal, so their scalar
curvatures, $s_+$ and $s_-$ have holomorphic gradients. By the unique
continuation principle, each of $s_\pm$ is either nonvanishing on an open
dense subset of $M$ or is identically zero. Hence if neither of the conformal
Einstein metrics $s_+^{-2}g_+$ and $s_-^{-2}g_-$ are defined on a dense open
subset of $M$, $s_\pm$ are both identically zero, which implies $W^\pm=0$;
then $c$ is a flat conformal structure and there are compatible Einstein
metrics on any simply connected open subset of $M$.

Conversely if there is compatible Einstein metric on a dense open subset,
then, as already noted, $B$ vanishes identically there, hence everywhere by
continuity.
\end{proof}

The following lemma provides a practical way to apply this characterization.

\begin{lemma}\label{l:bach-flat} Let $(M,c,J_+,J_-)$ be a connected
ambik\"ahler conformal $4$-manifold which is not conformally-flat and for
which the corresponding K\"ahler metrics $g_+$, $g_-$ are extremal, but not
homothetic. Then $c$ is Bach-flat iff the scalar curvatures $s_\pm$ of $g_\pm$
are related by
\begin{equation}\label{bflat}
C_+ s_- = C_- \Bigl(\frac{-v_-}{v_+}\Bigr)^{1/4}s_+,
\end{equation}  
where $C_\pm$ are constants not both zero and $v_\pm$ are the volume forms of
$(g_\pm,J_\pm)$.
\end{lemma}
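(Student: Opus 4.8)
The plan is to reduce Bach-flatness of $c$ to a single scalar relation between $s_+$, $s_-$ and the conformal factor relating $g_+$ and $g_-$, and to recognise \eqref{bflat} as the statement that the two conformal Einstein candidates $\bar g_\pm:=s_\pm^{-2}g_\pm$ are homothetic. Writing $g_-=\lambda^2 g_+$ and comparing the oppositely oriented volume forms gives $\lambda=(-v_-/v_+)^{1/4}$, so \eqref{bflat} is exactly the assertion that $\psi:=\lambda\,s_+/s_-$ is locally constant, equivalently $\bar g_+$ and $\bar g_-$ agree up to a constant. Since $g_+$ and $g_-$ are extremal, Proposition~\ref{p:bach} gives that $B^{g_\pm}$ is $J_\pm$-invariant, with $B^{g_+}=\tfrac12 s_+\,\rc{\bar g_+}_0$ and $B^{g_-}=\tfrac12 s_-\,\rc{\bar g_-}_0$; combined with the conformal invariance $B^{g_+}=\lambda^2 B^{g_-}$ this yields the pointwise identity $s_+\,\rc{\bar g_+}_0=\lambda^2 s_-\,\rc{\bar g_-}_0$ wherever $s_\pm\neq 0$. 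Because the $g_\pm$ are extremal, $\grad_{g_\pm}s_\pm$ are holomorphic, so by unique continuation each $s_\pm$ either vanishes identically or is nonzero on a dense open set; if, say, $s_+\equiv 0$ then $B\equiv 0$ by the expression for $B^{\hat g}$ recorded before Proposition~\ref{p:bach} and \eqref{bflat} holds with $C_+=0$, so I may assume both $s_\pm$ are nonzero on a dense open set $U$ and argue there.

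Next I would combine the identity above with the conformal transformation law of the tracefree Ricci tensor from the computation in Proposition~\ref{p:J-inv-ric}, namely $\rc{\bar g_+}_0=\rc{\bar g_-}_0+2\psi^{-1}(D^{\bar g_-}\d\psi)_0$ for $\bar g_+=\psi^{-2}\bar g_-$. Eliminating $\rc{\bar g_+}_0$ produces the key identity
\begin{equation*}
2\,s_+\,\psi^{-1}\,(D^{\bar g_-}\d\psi)_0=(\lambda^2 s_--s_+)\,\rc{\bar g_-}_0
\end{equation*}
on $U$, from which both implications follow in almost symmetric fashion.

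For the "if" direction I assume \eqref{bflat}, so $\psi$ is constant and the left-hand side vanishes. Here $\lambda^2 s_--s_+\not\equiv 0$: otherwise $\lambda^2 s_-=s_+$ together with $\lambda s_+=\psi\,s_-$ (with $\psi$ constant) forces $\lambda$ to be constant, making $g_+$ and $g_-$ homothetic, contrary to hypothesis. Hence $\rc{\bar g_-}_0$ vanishes on the dense subset of $U$ where $\lambda^2 s_-\neq s_+$, and so on all of $U$ by continuity; thus $\bar g_-$ is Einstein and $B\equiv 0$ by Proposition~\ref{p:bach} (consistently with Proposition~\ref{p:AK-bach-flat}). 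This direction is purely algebraic and uses the non-homotheticity of $g_\pm$ decisively.

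For the "only if" direction I assume $c$ is Bach-flat, so $B\equiv 0$ forces $\rc{\bar g_\pm}_0\equiv 0$ on $U$; in particular $\bar g_-$ is Einstein and the key identity collapses to $(D^{\bar g_-}\d\psi)_0=0$, i.e.\ $\psi$ is concircular for the Einstein metric $\bar g_-$. It remains to show $\psi$ is constant, which gives \eqref{bflat}, and this is the main obstacle. The plan is to invoke the classical structure theory of concircular functions: a nonconstant solution of $(D^{\bar g_-}\d\psi)_0=0$ realises a neighbourhood as a warped product $\d t^2+f(t)^2 h$ with $\psi=\psi(t)$, and the Einstein condition forces the three-dimensional fibre $(N,h)$ to be Einstein, hence of constant curvature; but a warped product over a one-dimensional base with a constant-curvature fibre is conformally flat, contradicting the standing hypothesis that $c$ is not conformally flat. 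Thus $\psi$ must be constant, as required. The only delicate bookkeeping is the orientation-reversing volume factor $\lambda$ and the vanishing-locus argument for $\rc{\bar g_-}_0$; the one genuinely non-formal input is the conformal flatness of Einstein warped products over space-form fibres.
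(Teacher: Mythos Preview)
Your argument is correct and reaches the same conclusion, but both directions proceed quite differently from the paper's proof.

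For the ``only if'' direction, the paper simply invokes the uniqueness (up to homothety) of a conformal Einstein representative on a non-conformally-flat manifold: since $s_+^{-2}g_+$ and $s_-^{-2}g_-$ are both Einstein, they must be homothetic, giving \eqref{bflat} in one line. You instead reprove this uniqueness via the concircular/warped-product route (Brinkmann--Tashiro), which is perfectly valid but considerably heavier; the payoff is that your argument is self-contained rather than appealing to a folklore result.

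For the ``if'' direction, the approaches diverge more interestingly. The paper works with the single metric $g=s_+^{-2}g_+=s_-^{-2}g_-$ and exploits the ambik\"ahler structure explicitly: since $\rc{g}_0$ is diagonal it equals $\kappa\,(J_+J_-)_g$, and the identity $B^g=W^\pm*_g\rc{g}_0$ together with the explicit form of $W^\pm$ yields $\kappa\,s_+^3=\kappa\,s_-^3$, forcing $\kappa=0$. Your route is purely conformal: you combine $B^{g_+}=\tfrac12 s_+\rc{\bar g_+}_0$, the conformal weight of $B$, and the transformation law for $\rc{}_0$ under $\bar g_+=\psi^{-2}\bar g_-$ to obtain $(\lambda^2 s_--s_+)\rc{\bar g_-}_0=0$ directly. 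This avoids the $W^\pm*_g$ computation entirely and uses nothing specific to the ambik\"ahler setting beyond Proposition~\ref{p:bach}, which is arguably cleaner.

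One small point: your density claim ``the dense subset of $U$ where $\lambda^2 s_-\neq s_+$'' needs justification---you have only shown this function is not identically zero, not that its zero set has empty interior. The paper makes an analogous move (``since they have holomorphic gradients, they are not equal on a dense open set''), so you are in good company, but strictly speaking both arguments tacitly use real-analyticity of extremal K\"ahler metrics (or an equivalent unique continuation principle). You might note this explicitly.
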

\begin{proof} If $s_+$ or $s_-$ is identically zero, $(M,c)$ is
half-conformally-flat and (with $C_+$ or $C_-$ zero) the result is
trivial. Otherwise, if $c$ is Bach-flat, $s_+^{-2} g_+$ and $s_-^{-2} g_-$ are
Einstein metrics defined on open sets with dense intersection, so they must be
homothetic, since $(M,c)$ is not conformally-flat. Thus
\begin{equation} \label{homothetic}
s_+^{-2}  g_+ = C \, s_-^{- 2} g_-,
\end{equation}
for a positive real number $C$, and~\eqref{bflat} holds with $(C_-/C_+)^2=C$.

Conversely, with $s_\pm\neq 0$,~\eqref{bflat} implies~\eqref{homothetic}, and
we may choose $g_\pm$ so that $g:=s_+^{-2} g_+ = s_-^{-2} g_-$ (i.e.,
$C=1$). By Proposition~\ref{p:bach}, $\delta^g W^+=0=\delta^g W^-$ and
\begin{equation} \label{BS}
B^g = W^{\pm} *_g \rc{g}_0.
\end{equation}
Moreover, since $(g_+, J_+)$ and $(g_-, J_-)$ are both extremal by assumption,
$\rc{g}_0$ is diagonal, hence a pointwise multiple $\kappa$ of $(J_+J_-)_g:=
g(J_+J_- \cdot,\cdot)$.  Relation (\ref{BS}) can then be rewritten as
\begin{equation*}
B^g = \kappa\, W^\pm *_g (J_+ J_-)_g
= \kappa\, W^\pm *_{g_\pm} (J_+ J_-)_{g_\pm}
= \tfrac16 \kappa\, s_\pm (J_+ J_-)_{g_\pm}
= \tfrac16\kappa\, s_\pm^3(J_+ J_-)_g.
\end{equation*}
We deduce that $\kappa \, s_+^3 = \kappa \, s_-^3$. Since $g_+$ and $g_-$ are
not homothetic, $s_+$ and $s_-$ are not identical; since they have holomorphic
gradients, they are then not equal on a dense open set. Thus $\kappa=0$, $g$
is Einstein and $B^g=0$.
\end{proof}

\section{Ambitoric geometry}

Ambitoric geometry concerns ambik\"ahler structures for which both K\"ahler
metrics are toric with respect to a common $T^2$-action; the pointwise
geometry is the following.

\begin{defn}\label{d:ambitoric}
An ambik\"ahler $4$-manifold $(M,c,J_+, J_-)$ is said to be \emph{ambitoric}
iff it is equipped with a $2$-dimensional family $\tor$ of vector fields
which are linearly independent on a dense open set, and are Poisson-commuting
hamiltonian Killing vector fields with respect to both K\"ahler structures
$(g_\pm,J_\pm,\omega_{\pm})$.
\end{defn}
Hamiltonian vector fields $K=\grad_{\omega} f$ and $\tilde K=\grad_\omega
\tilde f$ Poisson commute (i.e., $\{f,\tilde f\}=0$) iff they are
\emph{isotropic} in the sense that $\omega(K,\tilde K)=0$; it then follows
that $K$ and $\tilde K$ commute (i.e., $[K,\tilde K]=0$). Thus $\tor$ is an
abelian Lie algebra under Lie bracket of vector fields.

We further motivate the definition by examples in the following subsections.

\subsection{Orthotoric K\"ahler surfaces are ambitoric}\label{s:orthotoric}

\begin{defn}\label{d:orthotoric}\cite{ACG}
A K\"ahler surface $(M,g,J)$ is \emph{orthotoric} if it admits two independent
hamiltonian Killing vector fields, $K_1$ and $K_2$, with Poisson-commuting
momenta $x+y$ and $xy$, respectively, where $x$ and $y$ are smooth functions
with $\d x$ and $\d y$ orthogonal.
\end{defn}
The following result is an immediate corollary to~\cite[Props.~8 \&
9]{ACG}.

\begin{prop}\label{p:orthotoric} Any orthotoric K\"ahler surface
$(M,g_+, J_+, K_1, K_2)$ admits a canonical opposite hermitian structure $J_-$
\textup(up to sign\textup) with respect to which $M$ is ambitoric with
$\tor=\spn{\{K_1,K_2\}}$.
\end{prop}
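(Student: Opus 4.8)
The plan is to construct $J_-$ explicitly from the orthotoric data and reduce the analytic content to the normal form of~\cite{ACG}. First I would extract the algebraic consequence of the Poisson-commutativity hypothesis. Expanding $\{x+y,\,xy\}=0$ by the Leibniz rule gives $\{x,xy\}+\{y,xy\}=x\{x,y\}-y\{x,y\}=(x-y)\{x,y\}$, so $\{x,y\}=0$ on the dense open set where $x\neq y$. Since $\{x,y\}=\omega_+(J_+\grad x,J_+\grad y)=g_+(J_+\grad x,\grad y)$, this says $J_+\grad x\perp\grad y$; combined with $\d x\perp\d y$, with $J_+$-invariance, and with the automatic relations $g_+(v,J_+v)=0$ and $g_+(J_+u,v)=-g_+(u,J_+v)$, all six pairings among $\grad x,\,J_+\grad x,\,\grad y,\,J_+\grad y$ vanish. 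Hence, on the dense open set where $\d x,\d y$ are nonzero and $x\neq y$, the tangent bundle splits $g_+$-orthogonally as $D_x\oplus D_y$ with $J_+$-invariant summands $D_x=\spn{\{\grad x,J_+\grad x\}}$ and $D_y=\spn{\{\grad y,J_+\grad y\}}$.

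Next I would define $J_-$ to agree with $J_+$ on $D_x$ and with $-J_+$ on $D_y$. By construction $J_-^2=-\Id$, $J_-$ is $g_+$-orthogonal, and it commutes with $J_+$; moreover reversing the complex structure on the single complex line $D_y$ reverses the induced orientation, so $J_-$ is oppositely oriented to $J_+$. Thus $(g_+,J_+,J_-)$ is ambihermitian in the sense of Lemma~\ref{doubly-almost-complex}, with $T_+M=D_x$ and $T_-M=D_y$. The two evident sign choices---interchanging the roles of $D_x$ and $D_y$, or replacing $J_-$ by $-J_-$---account for the phrase ``up to sign''.

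The analytic heart, and the one genuinely nontrivial step, is to show that $J_-$ is integrable and admits a compatible K\"ahler metric $g_-$ conformal to $g_+$. Here I would appeal directly to the explicit local description of orthotoric K\"ahler surfaces in~\cite[Props.~8 \& 9]{ACG}: in the coordinates $(x,y)$ together with angular coordinates dual to $K_1,K_2$, the metric $g_+$ takes a standard shape from which one reads off that the $J_-$ just constructed is integrable, that the Lee form $\theta_-^{g_+}$ of $(g_+,J_-)$ is exact, and that the required conformal factor is a function of $x-y$. The rescaled metric $g_-$ is then K\"ahler, with K\"ahler form $\omega_-=g_-(J_-\cdot,\cdot)$. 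This is precisely what those two propositions package, so the present statement follows as an immediate corollary; verifying integrability and closedness by hand would otherwise be the main obstacle.

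Finally I would confirm the ambitoric conditions for $\tor=\spn{\{K_1,K_2\}}$. From $K_i(x+y)=K_i(xy)=0$ one solves, using $x\neq y$, to get $K_i(x)=K_i(y)=0$, so $K_1,K_2$ fix every function of $(x,y)$, in particular the conformal factor. Being hamiltonian Killing for $(g_+,J_+)$ they also preserve $g_+$ and $J_+$, hence $D_x,D_y$, and therefore $J_-$; so they are Killing for $g_-$ and preserve $\omega_-$, whence they are locally hamiltonian for $\omega_-$. Poisson-commutativity with respect to $\omega_-$ reduces to $\omega_-(K_1,K_2)=g_-(J_-K_1,K_2)=0$: writing $K_1=J_+\grad(x+y)$ and using that $J_-$ acts as $J_+$ on $D_x$ and as $-J_+$ on $D_y$ gives $J_-K_1=\grad y-\grad x$, and since $K_2=J_+\grad(xy)=y\,J_+\grad x+x\,J_+\grad y$ lies in $D_x\oplus D_y$ while $\grad y-\grad x$ pairs to zero against $J_+\grad x$ and $J_+\grad y$ by the orthogonality of $D_x$ and $D_y$, every term vanishes. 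Hence $(g_\pm,J_\pm,\omega_\pm,\tor)$ is ambitoric, as claimed.
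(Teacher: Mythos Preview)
Your proposal is correct and follows the same approach as the paper: both defer the analytic core (integrability of $J_-$ and the existence of the conformal K\"ahler metric $g_-$) to \cite[Props.~8 \& 9]{ACG}, which is exactly what the paper does---indeed, the paper gives no proof beyond the sentence ``an immediate corollary to~\cite[Props.~8 \& 9]{ACG}''. You simply unpack the surrounding details that the paper leaves implicit: the orthogonal splitting $D_x\oplus D_y$, the explicit definition of $J_-$, and the verification that $K_1,K_2$ are isotropic hamiltonian Killing fields for $(g_-,\omega_-)$.
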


\subsection{Ambitoric K\"ahler surfaces of Calabi type}\label{s:calabi-type}

\begin{defn}\label{d:calabi-type}\cite{ACG}
A K\"ahler surface $(M,g_+,J_+)$ is said to be of \emph{Calabi type} if it
admits a nonvanishing hamiltonian Killing vector field $K$ such that the
negative almost-hermitian pair $(g_+,J_-)$---with $J_-$ equal to $J_+$ on the
distribution spanned by $K$ and $J_+K$, but $-J_+$ on the orthogonal
distribution---is conformally K\"ahler.
\end{defn}
Thus, any K\"ahler surface of Calabi type is canonically ambik\"ahler. An
explicit formula for K\"ahler metrics of Calabi type, using the LeBrun normal
form~\cite{LeBrun0} for a K\"ahler metric with a hamiltonian Killing vector
field, is obtained in \cite[Prop.~13]{ACG}: $(g_+, J_+, \omega_+)$ is given
locally by
\begin{equation}\label{calabi-type}
\begin{split}
g_+ &= (az-b)g_{\Sigma} + w(z)\d z^2 + w(z)^{-1}(\d t + \alpha)^2, \\
\omega_+ &= (az-b)\omega_{\Sigma} +  \d z\wedge (\d t + \alpha),  \quad
\d\alpha = a \omega_{\Sigma},
\end{split}
\end{equation}
where $z$ is the momentum of the Killing vector field, $t$ is a function on
$M$ with $\d t(K)=1$, $w(z)$ is function of one variable, $g_{\Sigma}$ is a
metric on a $2$-manifold $\Sigma$ with area form $\omega_{\Sigma}$, $\alpha$
is a $1$-form on $\Sigma$ and $a,b$ are constant.

The second conformal K\"ahler structure is then given by
\begin{equation*}
\begin{split}
g_-&=(az-b)^{-2}g_+,\\
\omega_- &= (az-b)^{-1} \omega_{\Sigma} - (az-b)^{-2} \d z\wedge (\d t+\alpha).
\end{split}
\end{equation*}

Note that the $\big(\Sigma, (az-b)\omega_{\Sigma}, (az-b)g_{\Sigma}\big)$ is
identified with the \emph{K\"ahler quotient} of $(M,g_+,\omega_+)$ at the value
$z$ of the momentum. We conclude as follows.

\begin{prop}\label{p:calabi} An ambik\"ahler structure of Calabi type is
ambitoric---with respect to Killing vector fields $K_1,K_2$ with $K\in
\spn{\{K_1,K_2\}}$---iff $(\Sigma, g_{\Sigma}, \omega_{\Sigma})$ admits a
hamiltonian Killing vector field.
\end{prop}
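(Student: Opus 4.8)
The plan is to read everything off the explicit Calabi normal form \eqref{calabi-type} together with its conformal partner, reducing the assertion to a correspondence between the ``extra'' generator of $\tor$ and a Hamiltonian Killing field on $\Sigma$. First I would record that the fibre field $K$ (normalized by $\d t(K)=1$) is \emph{already} a Hamiltonian Killing vector field for both structures: every coefficient in \eqref{calabi-type} and in the formulas for $(g_-,\omega_-)$ depends only on $z$ and on the $\Sigma$-variables, so $K$ preserves $g_\pm$, while $\iota_K\omega_+=-\d z$ and $\iota_K\omega_-=(az-b)^{-2}\d z$ are (locally) exact. Hence being ambitoric with $K\in\tor$ is equivalent to the existence of a second Killing field $K_2$, independent of $K$ on a dense open set, that is Hamiltonian for both $\omega_\pm$ and Poisson-commutes with $K$; I claim these $K_2$ correspond exactly to Hamiltonian Killing fields of $(\Sigma,g_\Sigma,\omega_\Sigma)$.

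For the forward implication, given a Killing field $X_\Sigma$ of $g_\Sigma$ with $\iota_{X_\Sigma}\omega_\Sigma=\d h$, I would take its $(z,t)$-independent lift to $M$ and set
\[
K_2:=X_\Sigma-\bigl(\alpha(X_\Sigma)+a h\bigr)K,
\]
the vertical correction being chosen exactly so that $\iota_{K_2}(\d t+\alpha)=-ah$. Since $K_2z=0$ and the correction is a function on $\Sigma$, the only term of $\mathcal{L}_{K_2}g_+$ that is not manifestly zero is the one coming from $(\d t+\alpha)^2$, so the whole verification rests on showing $\mathcal{L}_{K_2}(\d t+\alpha)=0$. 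This is the crux: by Cartan's formula, using $\d(\d t+\alpha)=a\omega_\Sigma$ and $\iota_{K_2}(\d t+\alpha)=-ah$, one gets $\mathcal{L}_{K_2}(\d t+\alpha)=\d(-ah)+a\,\iota_{X_\Sigma}\omega_\Sigma=-a\,\d h+a\,\d h=0$, and the cancellation works \emph{precisely because} $\iota_{X_\Sigma}\omega_\Sigma$ is exact. Thus $K_2$ is Killing for $g_+$, hence for $g_-=(az-b)^{-2}g_+$ (the conformal factor being $z$-dependent and $K_2$-invariant); contracting into $\omega_\pm$ yields $\iota_{K_2}\omega_\pm=\d\bigl[(az-b)^{\pm1}h\bigr]$, so $K_2$ is Hamiltonian for both, with $K$-invariant momenta, whence $\omega_\pm(K,K_2)=0$. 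Independence of $K_2$ and $K$ holds wherever $X_\Sigma\neq0$, a dense open set, so $M$ is ambitoric with $\tor=\spn{\{K,K_2\}}$. I expect this Lie-derivative identity --- equivalently, the statement that the obstruction to lifting a surface Killing field to a connection-preserving Killing field upstairs is exactly the Hamiltonian condition --- to be the main point of the argument.

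For the converse, assume $M$ is ambitoric with $K\in\tor$ and choose $K_2\in\tor$ completing $K$ to a basis. Since all elements of $\tor$ Poisson-commute, $\omega_+(K,K_2)=0$; as $\iota_K\omega_+=-\d z$ this reads $K_2z=0$, so $K_2$ preserves the momentum $z$, while $[K,K_2]=0$ makes $K_2$ invariant under the flow of $K$. Therefore $K_2$ descends to the K\"ahler quotient at the level $z$, which by the K\"ahler-quotient identification stated above is $\bigl(\Sigma,(az-b)g_\Sigma,(az-b)\omega_\Sigma\bigr)$, as a Hamiltonian Killing field $\bar X$; as $(az-b)$ is constant along each level, $\bar X$ is Killing for $g_\Sigma$ and (being a surface Killing field, hence area-preserving) Hamiltonian for $\omega_\Sigma$. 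Finally $\bar X\not\equiv0$, since were $K_2$ everywhere vertical it would be pointwise proportional to $K$, contradicting the ambitoric independence on a dense set. This produces the required Hamiltonian Killing field on $\Sigma$ and completes the equivalence.
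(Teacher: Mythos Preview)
Your argument is correct and follows the same idea as the paper, namely the identification of $(\Sigma,(az-b)g_\Sigma,(az-b)\omega_\Sigma)$ with the K\"ahler quotient of $(M,g_+,\omega_+)$ at momentum level $z$; the paper simply states the proposition immediately after this observation without further proof, whereas you supply the details on both sides, including the explicit lift $K_2=X_\Sigma-(\alpha(X_\Sigma)+ah)K$ and the verification that $\mathcal L_{K_2}(\d t+\alpha)=0$, $\iota_{K_2}\omega_\pm=\d\bigl[(az-b)^{\pm1}h\bigr]$.
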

We shall refer to ambitoric $4$-manifolds arising locally from
Proposition~\ref{p:calabi} as \emph{ambitoric K\"ahler surfaces of Calabi
type}.  A more precise description is as follows.
\begin{defn}\label{d:ambitoric-calabi-type} An ambitoric $4$-manifold
$(M,c, J_+,J_-)$ is said to be of Calabi type if the corresponding
$2$-dimensional family of vector fields contains one, say $K$, with respect to
which the K\"ahler metric $(g_+, J_+)$ (equivalently, $(g_-,J_-)$) is of
Calabi type on the dense open set where $K$ is nonvanishing; without loss, we
can then assume that $J_+= J_-$ on $\spn{\{ K,J_+K \}} $.
\end{defn}
Note that this definition includes the case of a local K\"ahler product of two
Riemann surfaces each admitting a nontrivial Killing vector field (when we
have $a=0$ in \eqref{calabi-type}). In the non-product case we can assume
without loss $a=1, b=0$; hence
\begin{equation}\label{calabi-type-non-product}
\begin{split}
g_+ &= zg_{\Sigma} + \frac{z}{V(z)}\d z^2 + \frac{V(z)}{z}(\d t + \alpha)^2,\\
\omega_+ &= z\omega_{\Sigma} + \d z\wedge (\d t + \alpha), 
\qquad \d\alpha = \omega_{\Sigma},
\end{split}
\end{equation}
while the other K\"ahler metric $(g_-= z^{-2}g_+, J_-)$ is also of Calabi type
with respect to $K=\partial/\partial t$, with momentum ${\bar z}= z^{-1}$ and
${\bar V}(\bar z)= {\bar z}^4V(1/{\bar z})=V(z)/z^4$.

The form \eqref{calabi-type-non-product} of a non-product K\"ahler metric of
Calabi type is well adapted to curvature computations. For this paper, we need
the following local result.

\begin{prop}\label{p:calabi-type} Let $(M,g_+, J_+)$ be a non-product
K\"ahler surface of Calabi type with respect to $K$. Denote by $J_-$ the
corresponding negative hermitian structure and by $g_-$ the conformal K\"ahler
metric with respect to $J_-$.
\begin{bulletlist}
\item $(g_+,J_+)$ is extremal iff $(g_-,J_-)$ is extremal and this happens
precisely when $(\Sigma, g_{\Sigma})$ in \eqref{calabi-type-non-product} is of
constant Gauss curvature $k$ and $V(z)=a_0z^4 + a_1z^3 + kz^2 + a_3z +a_4$. In
particular, $(c,J_+,J_-)$ is locally ambitoric.
\item The conformal structure is Bach-flat iff, in addition,
$4a_0a_4 - a_1a_3=0$.
\item $(g_+,J_+)$ is CSC iff it is extremal with $a_0=0$, and
  K\"ahler--Einstein iff also $a_3=0$.
\end{bulletlist}
\end{prop}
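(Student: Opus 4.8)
The plan is to read off all three statements from the explicit normal form~\eqref{calabi-type-non-product} by computing the scalar curvature and the Ricci form, and then feeding these into the abstract criteria already established: extremality via $J_+\grad_{g_+}s_+$ being a Killing field (equivalently, by Proposition~\ref{p:bach}, via the Bach tensor), Bach-flatness via Lemma~\ref{l:bach-flat}, and the Einstein condition via proportionality of the Ricci form to $\omega_+$. Writing $\vartheta=\d t+\alpha$ and taking a $g_\Sigma$-orthonormal coframe $e^1,e^2$ on $\Sigma$, one obtains a $g_+$-orthonormal coframe $E^1=\sqrt z\,e^1$, $E^2=\sqrt z\,e^2$, $E^3=\sqrt{z/V}\,\d z$, $E^4=\sqrt{V/z}\,\vartheta$, in which $\omega_+=E^1\wedge E^2+E^3\wedge E^4$ and $\d z\wedge\vartheta=E^3\wedge E^4$. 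A direct frame computation then gives the scalar curvature and Ricci form
\[ s_+=\frac{2\kappa_\Sigma-V''(z)}{z},\qquad \rho_+=a(z)\,\omega_\Sigma+a'(z)\,\d z\wedge\vartheta,\quad a(z)=\kappa_\Sigma-\frac{V'(z)}{2z}, \]
where $\kappa_\Sigma$ is the Gauss curvature of $g_\Sigma$.

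For~(i): since $J_+\grad_{g_+}z=K$ and $K^\flat=g_+(K,\cdot)=(V/z)\,\vartheta$, one finds (when $s_+$ depends on $z$ alone) that $J_+\grad_{g_+}s_+=s_+'(z)\,K$ and $\cL_{s_+'(z)K}\,g_+=2\,s_+''(z)\,\d z\odot K^\flat$, so extremality then reduces to $s_+''(z)=0$, i.e. to $s_+$ being affine in $z$. Before this, one must rule out $\Sigma$-dependence: the $\Sigma$-component of $J_+\grad_{g_+}s_+$ is proportional to $z^{-2}J_\Sigma\grad_{g_\Sigma}\kappa_\Sigma$, whose $z$-dependence is incompatible with being Killing unless $\kappa_\Sigma$ is a constant $k$. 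Granting this, $s_+=(2k-V''(z))/z$ is affine in $z$ exactly when $V''=2k-c_0z-c_1z^2$, which integrates to the asserted quartic $V=a_0z^4+a_1z^3+kz^2+a_3z+a_4$ with $z^2$-coefficient $k$. Since $k$ is constant, $\Sigma$ is a space form, hence carries a Killing field, and Proposition~\ref{p:calabi} shows the structure is ambitoric. Finally $g_-$ is again Calabi type with momentum $\bar z=1/z$ and profile $\bar V(\bar z)=V(z)/z^4=a_4\bar z^4+a_3\bar z^3+k\bar z^2+a_1\bar z+a_0$ over the same base of Gauss curvature $k$; as this is a quartic with $\bar z^2$-coefficient $k$ iff $V$ is, the extremality criterion is symmetric under $g_+\leftrightarrow g_-$.

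For~(ii) and~(iii): with $V$ as above one gets $s_+=-6(a_1+2a_0z)$ and, dually, $s_-=-6(a_3z+2a_4)/z$, while $g_-=z^{-2}g_+$ gives $(-v_-/v_+)^{1/4}=z^{-1}$. Substituting into~\eqref{bflat} and cancelling $z^{-1}$ reduces Bach-flatness to $C_+(a_3z+2a_4)=C_-(a_1+2a_0z)$ for constants $C_\pm$ not both zero, which is solvable iff $(2a_0,a_1)$ and $(a_3,2a_4)$ are proportional, i.e. iff $4a_0a_4-a_1a_3=0$; this proves~(ii), once the homothetic and conformally-flat cases excluded in Lemma~\ref{l:bach-flat} are disposed of (homothety cannot occur, as $z^{-2}$ is non-constant). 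For~(iii), $s_+=-6(a_1+2a_0z)$ is constant iff $a_0=0$, giving the CSC statement. Since an Einstein metric is CSC, for Kähler--Einstein we may assume $a_0=0$ and then impose $\rho_+=\tfrac14 s_+\,\omega_+$; comparing the $\omega_\Sigma$- and $\d z\wedge\vartheta$-parts this reads $a(z)=\tfrac14 s_+z$, and with $a(z)=k-V'(z)/(2z)$ and $a_0=0$ the surviving $1/z$-term vanishes precisely when $a_3=0$.

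The routine but unavoidable labour is the frame computation of $s_+$ and $\rho_+$ on~\eqref{calabi-type-non-product}. The one genuinely delicate step is the extremality analysis in~(i), where I must justify that no Killing field can carry the $z$-dependent $\Sigma$-component that a non-constant $\kappa_\Sigma$ would force, thereby pinning $\kappa_\Sigma$ to a constant before reducing to $s_+''=0$; the analogous subtlety in~(iii) is computing the Ricci form with the correct integration constant, so that the Einstein condition lands exactly on $a_3=0$.
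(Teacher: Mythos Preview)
Your approach differs substantially from the paper's. The paper's own proof is essentially a citation: it asserts the result is well-known when one assumes \emph{a priori} that $s_+$ is a Killing potential for a multiple of $K$ (referring to \cite[Prop.~14]{ACG}), and then invokes \cite[Prop.~5]{ACGT-survey} for the fact that this extra assumption is automatic once $g_+$ is extremal. You instead work everything out explicitly from the normal form~\eqref{calabi-type-non-product}, which is more self-contained and makes the numerology of (ii) and (iii) transparent.

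Your computations for (ii) and (iii) are correct: the formulae $s_+=-6(a_1+2a_0z)$, $s_-=-6(a_3z+2a_4)/z$, and the application of Lemma~\ref{l:bach-flat} give exactly $4a_0a_4-a_1a_3=0$; the CSC and K\"ahler--Einstein reductions also check out. (You should also note that the conformally-flat case excluded by Lemma~\ref{l:bach-flat} forces $s_\pm\equiv 0$, hence $a_0=a_1=a_3=a_4=0$, so the condition holds trivially there.)

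The one point where your argument is incomplete is precisely the step the paper outsources to \cite{ACGT-survey}: showing that extremality forces $\kappa_\Sigma$ to be constant. Your claim that the $\Sigma$-component $2z^{-2}J_\Sigma\grad_{g_\Sigma}\kappa_\Sigma$ of $J_+\grad_{g_+}s_+$ ``has $z$-dependence incompatible with being Killing'' is plausible but not established: once you compute $\cL_Z g_+$ in full, the $z^{-2}$-scaled $\Sigma$-piece and the $K$-piece (whose coefficient $\partial_z s_+$ itself depends on $\kappa_\Sigma$) generate several cross-terms of types $\d z\odot Y^{\flat_\Sigma}$ and $\vartheta\odot \d_\Sigma\kappa_\Sigma$ that must be shown not to cancel. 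This can be carried through, but it is real work, not a one-line observation; alternatively you could argue via the structure of the isometry algebra of a bundle-type metric, or simply cite \cite[Prop.~5]{ACGT-survey} at this one step while keeping the rest of your explicit approach. A minor notational point: your formula $\rho_+=a(z)\omega_\Sigma+a'(z)\,\d z\wedge\vartheta$ with $a(z)=\kappa_\Sigma-V'(z)/(2z)$ is only literally a function of $z$ once $\kappa_\Sigma$ is constant; before that, ``$a'(z)$'' should be read as $\partial_z a$.
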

\begin{proof} The result is well-known under the extra assumption that the
scalar curvature $s_+$ of the extremal K\"ahler metric $g_+$ is a Killing
potential for a multiple of $K$ (see e.g., \cite[Prop.~14]{ACG}). However, one
can show~\cite[Prop.~5]{ACGT-survey} that the later assumption is, in fact,
necessary for $g_+$ to be extremal.
\end{proof}

\subsection{Ambik\"ahler metrics with diagonal Ricci tensor}

If an ambihermitian metric $(g,J_+,J_-)$ has diagonal Ricci tensor $\rc{g}$,
then by Proposition~\ref{p:W-degen}, $W^\pm$ are degenerate, and hence the Lee
forms $\theta^g_\pm$ of $J_\pm$ have the property that $\d\theta^g_+$ is
antiselfdual, while $\d\theta^g_-$ is selfdual. Let us suppose that
$\d\theta^g_\pm=0$, so that $(g,J_+,J_-)$ is locally ambik\"ahler.  (We have
seen that this holds if $g$ is Einstein, but it is also automatic if $M$ is
compact, or if $\theta^g_+ + \theta^g_-$ is closed.)

On an open set where the K\"ahler metrics $g_\pm=\varphi_{\pm}^2g$---with
K\"ahler forms $\omega_\pm=g_\pm(J_\pm\cdot,\cdot)$---are defined,
Proposition~\ref{p:J-inv-ric} implies that $\varphi_\pm$ are Killing
potentials with respect to $(g_\pm,J_\pm)$ respectively. The corresponding
hamiltonian Killing vector fields $Z_\pm=\grad_{\omega_\pm} \varphi_\pm$ are
also Killing vector fields of $g$, since they preserve $\varphi_{\pm}$
respectively. Hence they also preserve $\rc{g}$, $W^+$ and $W^-$. We shall
further suppose that $Z_+$ preserves $J_-$, which is automatic unless $g$ is
selfdual Einstein, and that $Z_-$ preserves $J_+$, which is similarly
automatic unless $g$ is antiselfdual Einstein.

\begin{prop}\label{p:diagonal-ambi}
Let $(g_\pm,J_\pm,\omega_\pm)$ be ambik\"ahler, and suppose
$g=\varphi_\pm^{-2}g_\pm$ is a compatible metric with diagonal Ricci
tensor such that $Z_\pm=\grad_{\omega_\pm}\varphi_\pm$ preserve both $J_+$ and
$J_-$. Then precisely one of the following cases occurs\textup:
\begin{numlist}
\item $Z_+$ and $Z_-$ are both identically zero and then $(M,c,J_+,J_-)$ is a
locally a K\"ahler product of Riemann surfaces\textup;
\item $Z_+\otimes Z_-$ is identically zero, but $Z_+$ and $Z_-$ are not both
identically zero, and then $(M,c,J_+,J_-)$ is either orthotoric or of Calabi
type\textup;
\item $Z_+\wedge Z_-$ is identically zero, but $Z_+\otimes Z_-$ is not, and
then $(M,c,J_+,J_-)$ is either ambitoric or of Calabi type\textup;
\item $Z_+\wedge Z_-$ is not identically zero, and then $(M,c,J_+,J_-)$ is
ambitoric.
\end{numlist}
In particular $(M,c,J_+,J_-)$ is either a local product, of Calabi type, or
ambitoric.
\end{prop}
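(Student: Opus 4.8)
The plan is to first note that the four cases are mutually exclusive and jointly exhaustive: they form a chain of successively weaker degeneracies of the pair $(Z_+,Z_-)$ (both vanish; the tensor product vanishes; the wedge vanishes; the wedge is nonzero), so exactly one of them holds and it remains only to identify the geometry in each. Throughout I would use the facts recorded just before the statement: $Z_\pm=\grad_{\omega_\pm}\varphi_\pm$ are Killing for $g$ and for $g_\pm$, preserve both $J_+$ and $J_-$, and hence preserve the $g$-orthogonal splitting $TM=T_+M\oplus T_-M$ of Lemma~\ref{doubly-almost-complex} together with all curvature data of $g$. Since $\varphi_+$ is a Killing potential, $Z_+\varphi_+=\omega_+(Z_+,Z_+)=0$, and symmetrically $Z_-\varphi_-=0$.

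The technical heart is to show that each $Z_\pm$ is in fact hamiltonian Killing for \emph{both} symplectic forms and that the two fields Poisson-commute. Because $Z_+$ preserves $g$ and $J_-$ it preserves the Lee form $\theta^g_-=-\d\log\varphi_-$ of $(g,J_-)$, whence $Z_+\varphi_-=c_-\varphi_-$ for a local constant $c_-$; thus $\cL_{Z_+}\omega_-=-2c_-\omega_-$, so that $\d\,\iota_{Z_+}\omega_-=-2c_-\omega_-$, while $\omega_+(Z_+,Z_-)=Z_-\varphi_+=c_+\varphi_+$ and $\omega_-(Z_-,Z_+)=Z_+\varphi_-=c_-\varphi_-$, with $c_+$ defined symmetrically. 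Hence both the hamiltonian property of $Z_+$ with respect to $\omega_-$ (local exactness of $\iota_{Z_+}\omega_-$) and the Poisson-commutation of $Z_+,Z_-$ with respect to $\omega_\pm$ reduce to the single assertion $c_+=c_-=0$, i.e.\ that $Z_\mp$ preserves $\varphi_\pm$. I expect this to be the main obstacle. To dispose of it I would argue that $c_-\neq0$ would make $Z_+$ a nontrivial $J_-$-holomorphic homothety of the K\"ahler surface $(g_-,J_-)$ (so that $D^{g_-}\d u=\mu\,g_-$ for the real potential $u$ of its gradient part), forcing $g_-$ to be a K\"ahler cone; but $Z_+$ is simultaneously an isometry of the conformal, non-homothetic metric $g=\varphi_+^{-2}g_+$ with diagonal Ricci, and — outside the selfdual/antiselfdual-Einstein situations that the hypotheses on $Z_\pm$ exclude — this is incompatible with the ambik\"ahler structure.

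With $c_\pm=0$ in hand, $Z_+$ and $Z_-$ are commuting, Poisson-commuting, hamiltonian Killing fields for both $(\omega_+,J_+)$ and $(\omega_-,J_-)$, and the case analysis is then short. In case~(iv), where $Z_+\wedge Z_-\not\equiv0$, the fields are independent on a dense open set and $\tor=\spn{\{Z_+,Z_-\}}$ exhibits $(M,c,J_+,J_-)$ as ambitoric by Definition~\ref{d:ambitoric}. In case~(i), $Z_\pm\equiv0$ forces $\varphi_\pm$ constant, so $g$ is simultaneously $J_+$- and $J_-$-K\"ahler; then $J_\pm$, hence $-J_+J_-$, are $D^g$-parallel, the splitting $TM=T_+M\oplus T_-M$ is parallel, and de~Rham yields a local K\"ahler product of two Riemann surfaces.

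For the intermediate cases I would invoke the $1$-jet determinacy of Killing fields: a nontrivial Killing field has nowhere-dense zero set, so by a Baire argument $Z_+\otimes Z_-\equiv0$ forces exactly one of $Z_\pm$ to vanish identically. Thus in case~(ii), say $Z_-\equiv0$, the metric $g$ is globally $J_-$-K\"ahler while carrying the opposite complex structure $J_+$ with diagonal Ricci and the hamiltonian Killing field $Z_+$; appealing to the classification of hamiltonian $2$-forms in four dimensions~\cite{ACG}, such a structure is orthotoric or of Calabi type, and $Z_+$ lies in the associated torus (Proposition~\ref{p:orthotoric}) or is the Calabi field (Proposition~\ref{p:calabi}). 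In case~(iii) both $\varphi_\pm$ are nonconstant but $Z_+$ and $Z_-$ are pointwise proportional; tracking the eigenbundle in which $\grad_{g_\pm}\varphi_\pm$ lies identifies the common direction with the Calabi field of~\eqref{calabi-type-non-product}, giving Calabi type, unless a second independent symmetry survives, in which case the structure is again ambitoric. The disjunction in the statement is exactly this list of outcomes, and the concluding ``in particular'' is their union.
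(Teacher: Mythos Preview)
Your outline has the right overall architecture, but there are two genuine gaps.

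First, your argument for $c_\pm=0$ is the weak point you yourself flag, and the cone heuristic does not close it. Locally, a K\"ahler metric can perfectly well admit a $J$-holomorphic homothety, and ``incompatible with the ambik\"ahler structure'' is not a proof; nor do the hypotheses exclude the (anti)selfdual Einstein situations in the way you suggest. The paper's argument here is purely algebraic and you should use it: since $Z_\mp$ preserves $J_\pm$ and $g$, it preserves $\theta^g_\pm=\d\log\varphi_\pm$, hence $\theta^g_\pm(Z_\mp)=c_\pm$ is constant; and since $Z_\pm=\grad_{\omega_\pm}\varphi_\pm$, a short computation with $\cL_{Z_\mp}$ applied to $\iota_{Z_\pm}\omega_\pm=-\d\varphi_\pm$ gives $[Z_\mp,Z_\pm]=-c_\pm Z_\pm$. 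Adding the two instances yields $c_+Z_++c_-Z_-=0$, whence $[Z_+,Z_-]=0$ and $c_\pm Z_\pm=0$; but $Z_\pm=0$ forces $\theta^g_\pm=0$ and hence $c_\pm=0$, so in all cases $c_+=c_-=0$. This immediately gives $\d\varphi_\pm(Z_\mp)=0$ and $\omega_\pm(Z_+,Z_-)=0$, i.e.\ the Poisson-commutation you need.

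Second, and more seriously, your treatment of case~(iii) is not a proof: when $Z_+$ and $Z_-$ are pointwise proportional (and nonzero on a dense set), you have essentially \emph{one} hamiltonian Killing field $Z$, and you must \emph{produce} a second one to conclude ambitoric. ``Unless a second independent symmetry survives'' begs the question. The paper manufactures the second field from a Killing tensor: after normalising $Z:=Z_+=Z_-$, relation~\eqref{s-relation} shows that $h\,g$ with $h=1/(\varphi_+\varphi_-)$ is barycentric and that $S=f\,\Id+h\,J_+J_-$, with $2f=\varphi_+^{-2}+\varphi_-^{-2}$, is Killing for $g$ (this is exactly Corollary~\ref{corvesti}); one then checks $\cL_ZS=0$, $D^gZ^\flat$ is $J_\pm$-invariant so $[D^g_\cdot Z,S]=0$, and hence $SZ$ is again Killing and $\omega_\pm$-hamiltonian. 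Then $\spn{\{Z,SZ\}}$ gives the ambitoric structure wherever $Z$ and $SZ$ are independent, while dependence means $J_+J_-Z\in\spn{Z}$, i.e.\ Calabi type. Your case~(ii) is closer to correct, but note that the appeal to \cite{ACG} requires exhibiting a hamiltonian $2$-form, not merely a Killing field: with $g=g_+$ and $\lambda=-1/\varphi_-$, the closed $J_+J_-\d\lambda=\tfrac12\d\sigma$ yields the hamiltonian $2$-form $\tfrac32\sigma\,\omega_++\lambda^3\omega_-$, after which Theorems~1 and~3 of \cite{ACG} give orthotoric or Calabi type.
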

\begin{proof} We first note that $Z_+$ and $Z_-$ preserve both Lee forms
$\theta_\pm^g=\varphi_\pm^{-1}\d\varphi_\pm$, and hence
$\theta_\pm^g(Z_\mp)Z_\pm+[Z_\mp,Z_\pm]=0$, with $\theta_\pm^g(Z_\mp)=c_\pm$
constant. Hence $c_+ Z_+ + c_- Z_- = 0$, so $[Z_+,Z_-]=0$ and $c_\pm Z_\pm=0$,
which forces $c_\pm=0$ (since $Z_\pm=0$ implies $\theta^g_\pm=0$).  We now
have $\d\varphi_\pm(Z_\mp)=0$, so $\omega_\pm(Z_+,Z_-)=0$.

By connectedness and unique continuation for holomorphic vector fields,
conditions (i)--(iv) are mutually exclusive and the open condition in each
case holds on a dense open set. Case (i) is trivial: here $g=g_+=g_-$ is
K\"ahler and $J_+J_-$ is a $D^g$-parallel product structure.

In case (ii) either $Z_+$ or $Z_-$ is zero on each component of the dense open
set where they are not both zero. Suppose, without loss that $Z_+=0$ so that
$g=g_+$ and $Z_-=J_-\grad_{g_-}\varphi_-=J_-\grad_g\lambda$ with
$\lambda=-1/\varphi_-$. However, since $Z_-$ also preserves $\omega_+$,
$J_+J_-\d\lambda$ is closed, hence locally equal to $\frac12 \d\sigma$ for a
smooth function $\sigma$. According to \cite[Remark~2]{ACG}, the $2$-form
$\varphi := \frac{3}{2}\sigma \omega_+ + \lambda^3 \omega_-$ is hamiltonian
with respect to the K\"ahler metric $(g_+, J_+)$; by~\cite[Theorems~1 \&
3]{ACG}, this means that $g= g_+$ is either orthotoric (on a dense open
subset of $M$), or is of Calabi type.

In case (iii) $Z_+$ and $Z_-$ are linearly dependent, but are both
nonvanishing on a dense open set. Hence, we may assume, up to rescaling on
each component of this dense open set, that $Z:=Z_+=Z_-$. This is equivalent
to
\begin{equation}\label{s-relation}
J_+ \Bigl(\frac{\d\varphi_+}{\varphi_+^2}\Bigr) = J_-
\Bigr(\frac{\d\varphi_-}{\varphi_-^2}\Bigr),
\end{equation}
and hence also
\begin{equation*}
2J_\pm \d\Bigl(\frac{1}{\varphi_+\varphi_-}\Bigr)=J_{\mp}
\d\Bigl(\frac{1}{\varphi_+^2} +\frac{1}{\varphi_-^2}\Bigr).
\end{equation*}
Since $h\,g$, with $h=1/\varphi_+\varphi_-$, is the barycentre of $g_+$ and
$g_-$, it follows (cf.~\cite{Jelonek3b} and Appendix~\ref{A:ckJ}) that the
symmetric tensor $g(S\cdot,\cdot)$, where $S= f \Id + h J_+J_-$ and
$2f=1/\varphi_+^2 + 1/\varphi_-^2$, is a Killing tensor with respect to
$g$. Clearly $\cL_Z S = 0$, and it follows from~\eqref{s-relation} that $D^g
Z^\flat$ is both $J_+$ and $J_-$ invariant. Thus $X\mapsto D^g_X Z$ commutes
with $S$ and $D^g_ZS=0$. Straightforward computations now show that $SZ$ is a
Killing field with respect to $g$, and hamiltonian with respect to
$\omega_\pm$.

Moreover, $Z$ and $SZ$ commute and span a isotropic subspace with respect to
$\omega_\pm$, so define an ambitoric structure on the open set where they are
linearly independent.  Clearly $Z$ and $SZ$ are linearly dependent only where
$J_+J_-Z$ is proportional to $Z$, in which case $g_\pm$ is of Calabi type.

Case (iv) follows by definition.
\end{proof}

\smallbreak
\noindent {\it Proof of Theorem}~\ref{thm:AT-rough}. For the first part, if
$g= \varphi_{\pm}^{-2} g_{\pm}$ has diagonal Ricci tensor,
Proposition~\ref{p:diagonal-ambi} implies the existence of an ambitoric
structure once we show that $\cL_{Z_+} J_- = 0 = \cL_{Z_-}J_+$ where
$Z_\pm=\grad_{\omega_\pm} \varphi_\pm= -J_{\pm} \grad_{g}\varphi_{\pm}^{-1}$
are the corresponding Killing vector fields of $g$. As already observed, this
is automatic unless $g$ is Einstein and (anti)selfdual. By assumption and
without loss of generality, we may suppose $g$ is a selfdual Einstein metric
with nonzero scalar curvature $\s g$ which is not antiselfdual. As $W^+$ does
not vanish identically, it determines $J_+$ up to sign, and so $\cL_{Z_-}
J_+=0$. Since $Z_+ = -J_+ \grad_g |W^+|^{-1/3}_g$ it follows that $[Z_-,
Z_+]=0$.  In order to show $\cL_{Z_+} J_-=0$, we recall that negative K\"ahler
metrics $g_-$ in the conformal class are in a bijection with antiselfdual
twistor $2$-forms $\psi$ (see~\cite{Pontecorvo} and Appendix~\ref{Killing}),
the latter being defined by the property that there is a $1$-form $\alpha$
such that $D^g_X \psi = (\alpha \wedge X^{\flat})^-$ for any vector field $X$,
where $(\cdot )^-$ denotes the antiselfdual part. Specifically, in our case,
$\psi= \varphi_-^{-1} \omega_-$ and $\alpha= 2Z_-^{\flat}$. Since $\cL_{Z_+}
Z_-^{\flat} =0$, $\cL_{Z_+} \psi$ is a parallel antiselfdual $2$-form. As $g$
is selfdual with nonzero scalar curvature, the Bochner formula shows there are
no non-trivial parallel antiselfdual $2$-forms; hence $\cL_{Z_+} \psi=0$ and
so $\cL_{Z_+} J_-=0$.

In the other direction, we shall see later in
Proposition~\ref{p:diagonal-ricci} that any regular ambitoric structure admits
compatible metrics with diagonal Ricci tensor. The characterization of the
Pleba\'nski-Demia\'nski case now follows from
Proposition~\ref{p:einstein-maxwell}.

For the second part, Proposition~\ref{p:bach} implies that an ambik\"ahler
structure $(g_{\pm},\omega_{\pm},J_{\pm})$ has diagonal Bach tensor iff both
K\"ahler metrics are extremal. The assumption on the conformal class ensures
that it is not conformally flat and hence the corresponding scalar curvatures
$s_{\pm}$ do not both vanish identically, so that, using
Proposition~\ref{p:bach} again, the metric $g=s_+^{-2}g_+$ say is well-defined
with diagonal Ricci tensor on a dense open subset of $M$.  By
Proposition~\ref{p:diagonal-ambi} (noting that $Z_{\pm} =J_{\pm}
\grad_{g_{\pm}} s_{\pm}$ are well-defined on $M$) we conclude that
$(g_{\pm},\omega_{\pm},J_{\pm})$ is ambitoric. \qed

\subsection{Ambihermitian Einstein $4$-manifolds are locally ambitoric}
\label{s:Eambihermitian}

Proposition~\ref{p:AHE-rough} implies that any Einstein metric with degenerate
half Weyl tensors---in particular, any ambihermitian Einstein metric---is
ambik\"ahler and Bach-flat. Conversely, Bach-flat ambik\"ahler metrics
$(g_\pm,J_\pm)$ are conformal to an Einstein metric $g$ on a dense open set by
Proposition~\ref{p:AK-bach-flat}

In the generic case that $W^\pm$ are both nonzero, the ambik\"ahler metrics
conformal to $g$ are $g_\pm=\smash{|W^\pm|_g^{2/3}} g$, and the Einstein
metric is recovered up to homothety as $g=s_{\pm}^{-2}g_\pm$, where $s_\pm$ is
the scalar curvature of $g_\pm$. We have already noted that the vector fields
$Z_\pm:= J_\pm \grad_{g_{\pm}}s_\pm$ are Killing with respect to $g_\pm$
(respectively) and hence also $g$. More is true.

\begin{prop}\label{p:generic-ein} Let $(M,c,J_+,J_-)$ be a Bach-flat
ambik\"ahler manifold such that the K\"ahler metrics $g_\pm$ have nonvanishing
scalar curvatures $s_\pm$. Then the vector fields $Z_\pm=J_\pm
\grad_{g_{\pm}}s_\pm$ are each Killing with respect to both $g_+$ and $g_-$,
holomorphic with respect to both $J_+$ and $J_-$, and isotropic with respect
to both $\omega_+$ and $\omega_-$ \textup(i.e., $\omega_\pm(Z_+, Z_-)=
0$\textup{);} in particular $Z_+$ and $Z_-$ commute.

Furthermore $(M,c,J_+,J_-)$ is ambitoric in a neighbourhood of any point in a
dense open subset, and on a neighbourhood of any point where $Z_+$ and $Z_-$
are linearly independent, we may take $\tor=\spn{\{Z_+, Z_-\}}$.
\end{prop}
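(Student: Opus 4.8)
The plan is to reduce the Bach-flat hypothesis to extremality, build a common Einstein representative, and then exploit its conformal rigidity to transfer the plus-data to the minus side. Since $c$ is Bach-flat we have $B^{g_\pm}=0$, and as the zero tensor is trivially $J_\pm$-invariant, Proposition~\ref{p:bach} shows $(g_+,J_+)$ and $(g_-,J_-)$ are both extremal; by definition this already gives that $Z_+$ (resp.\ $Z_-$) is Killing for $g_+$ (resp.\ $g_-$) and holomorphic for $J_+$ (resp.\ $J_-$). For the common Einstein metric I would apply Proposition~\ref{p:bach} with $\hat g=g_+$: it yields $B^{g_+}=\tfrac12\rc{g}_0\,s_+$ for $g:=s_+^{-2}g_+$, so $B^{g_+}=0$ together with $s_+\neq0$ forces $\rc{g}_0=0$, i.e.\ $g$ is Einstein; likewise $s_-^{-2}g_-$ is Einstein. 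Since $s_\pm\neq0$ we have $W^\pm\neq0$, so $c$ is not conformally flat and the two Einstein metrics are homothetic; after a constant rescaling of $g_-$ we may assume $g=s_+^{-2}g_+=s_-^{-2}g_-$. Writing $\varphi_\pm=s_\pm$ and using that $\rc{g}$ is $J_\pm$-invariant (indeed $g$ is Einstein), Proposition~\ref{p:J-inv-ric} then shows each $Z_\pm$ is Killing for $g$ as well as for $g_\pm$.

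Next I would run the rigidity argument already used in the proof of Theorem~\ref{thm:AT-rough}. Being Killing for the Einstein metric $g$, the field $Z_+$ has a flow by orientation-preserving isometries of $g$, which therefore preserves the conformal invariant $W^-$; as $s_-\neq0$ this $W^-$ is nonzero and degenerate, hence it determines both $J_-$ (up to a sign fixed by connectedness of the flow) and the K\"ahler metric $g_-=|W^-|^{2/3}_g g$ (Proposition~\ref{p:gs}, applied to $W^-$). Thus $\cL_{Z_+}J_-=0$, and since $Z_+$ preserves $g$ and $W^-$ it preserves $g_-$, hence also $\omega_-$ and $s_-$; in particular $Z_+$ is Killing for $g_-$ and $Z_+(s_-)=0$. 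Symmetrically $Z_-$ is Killing for $g_+$, holomorphic for $J_+$, and $Z_-(s_+)=0$. Isotropy is then immediate from $\omega_\pm(X,Y)=g_\pm(J_\pm X,Y)$ and $J_\pm^2=-\Id$: one computes $\omega_+(Z_+,Z_-)=-g_+(\grad_{g_+}s_+,Z_-)=-Z_-(s_+)=0$ and, symmetrically, $\omega_-(Z_+,Z_-)=g_-(\grad_{g_-}s_-,Z_+)=Z_+(s_-)=0$. Since each $Z_\pm$ is hamiltonian for $\omega_\pm$ (with momentum $\pm s_\pm$) and they are isotropic, the discussion following Definition~\ref{d:ambitoric} gives that they Poisson-commute, whence $[Z_+,Z_-]=0$.

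For the final clause, observe that each $Z_\pm$ is now a holomorphic Killing field for \emph{both} K\"ahler structures, and so is hamiltonian for each of $\omega_+,\omega_-$ (globally for the one defining it, locally for the other). Hence wherever $Z_+\wedge Z_-\neq0$, the space $\tor=\spn{\{Z_+,Z_-\}}$ consists of Poisson-commuting hamiltonian Killing fields for both $(g_\pm,J_\pm,\omega_\pm)$, i.e.\ the structure is ambitoric with $\tor=\spn{\{Z_+,Z_-\}}$. By unique continuation for holomorphic vector fields, either $Z_+\wedge Z_-\neq0$ on a dense open set---and we are done---or $Z_+\wedge Z_-\equiv0$, in which case I would invoke Proposition~\ref{p:diagonal-ambi}, whose hypotheses were verified above ($g=\varphi_\pm^{-2}g_\pm$ has diagonal Ricci and $Z_\pm$ preserve both $J_+$ and $J_-$). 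This leaves its cases (i)--(iii); the product and Calabi-type possibilities are upgraded to ambitoric using extremality, since Proposition~\ref{p:calabi-type} then forces the quotient surface $\Sigma$---or each product factor---to have constant Gauss curvature, hence to carry a local hamiltonian Killing field, which makes the structure ambitoric by Proposition~\ref{p:calabi}.

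I expect the second paragraph to be the crux. Once $Z_+$ is known to be Killing for $g_-$ and holomorphic for $J_-$, everything downstream (isotropy, commutation, the torus action) is formal; the real content is the rigidity step that an infinitesimal isometry of the Einstein metric must preserve the canonically defined opposite complex structure. This requires $W^-\neq0$---guaranteed here by $s_-\neq0$---and a careful treatment of the residual sign ambiguity in ``$W^-$ determines $J_-$,'' which is exactly why the argument is confined to the region where the scalar curvatures are nonzero.
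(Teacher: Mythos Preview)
Your proof is correct and follows essentially the same route as the paper. The one notable difference is in the first step: you build the common Einstein metric $g=s_+^{-2}g_+=s_-^{-2}g_-$ and then argue that each $Z_\pm$, being Killing for $g$, preserves $W^\mp$ and hence the canonically determined pair $(J_\mp,g_\mp)$. The paper is slightly more direct here: it simply observes that $Z_\pm$, being Killing for $g_\pm$, are \emph{conformal} vector fields for $c$, and conformal vector fields automatically preserve $W^+$ and $W^-$ together with their simple eigenspaces---no Einstein representative is needed for this part. Your detour through the Einstein metric is not wasted, though, since you need it anyway to feed into Proposition~\ref{p:diagonal-ambi} for the second clause, and the paper uses it there too. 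The treatment of the residual Calabi-type and product cases via extremality and Proposition~\ref{p:calabi-type} matches the paper's case-by-case analysis of (i)--(iii).
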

\begin{proof}
$Z_+$ and $Z_-$ are conformal vector fields, so they preserve $W^\pm$ and its
unique simple eigenspaces. One readily concludes~\cite{AG1,De} that the Lie
derivatives of $g_+$, $g_-$, $J_+$, $J_-$ (and hence also $\omega_+$ and
$\omega_-$) all vanish. Consequently, $\cL_{Z_+}s_-=0=\cL_{Z_-}s_+$---or
equivalently $\omega_\pm(Z_+, Z_-)= 0$. This proves the first part.

Since we are now in the situation of Proposition~\ref{p:diagonal-ambi}, it
remains to show that $(M,c,J_+,J_-)$ is locally ambitoric even in cases where
Proposition~\ref{p:diagonal-ambi} only asserts that the structure has Calabi
type. In case (i) this is easy: $g=g_+=g_-$ is K\"ahler--Einstein with
$D^g$-parallel product structure, so is the local product of two Riemann
surfaces with constant Gauss curvatures.

In case (ii) $g=g_+$ is K\"ahler--Einstein, Proposition~\ref{p:calabi-type}
implies that the quotient Riemann surface $(\Sigma, g_{\Sigma})$ has constant
Gauss curvature.

In case (iii) $g_\pm$ are extremal, so we have either a local product of two
extremal Riemann surfaces or, in Proposition~\ref{p:calabi-type}, the quotient
Riemann surface $(\Sigma, g_{\Sigma})$ has constant Gauss curvature; it
follows that $g_+$ is locally ambitoric of Calabi type.
\end{proof}

\begin{rem} \label{rem:KE}
The case $Z_+=0$ above yields the following observation of independent
interest: let $(M,g,J,\omega)$ be a K\"ahler--Einstein $4$-manifold with
everywhere degenerate antiselfdual Weyl tensor $W^-$, and trivial first deRham
cohomology group. Then $(M,g,J,\omega)$ admits a globally defined hamiltonian
$2$-form in the sense of \cite{ACG} and, on a dense open subset $M^0$, the
metric is one of the following: a K\"ahler product metric of two Riemann
surfaces of equal constant Gauss curvatures, or a K\"ahler--Einstein metric of
Calabi type, described in Proposition~\ref{p:calabi-type}, or a
K\"ahler--Einstein ambitoric metric of parabolic type (see
section~\ref{summary}).
\end{rem}

\noindent {\it Proof of Theorem}~\ref{thm:AHE-refined}.
For the first part, if $W^+$ and $W^-$ identically vanish, we have a real
space form and $g$ is locally conformally-flat (and is obviously locally
ambitoric).

If $g$ is half-conformally-flat but not flat, then $g$ admits a canonically
defined hermitian structure $J=J_+$, i.e., $g$ is an Einstein, hermitian
self-dual metric (see \cite{AG2} for a classification). In particular, $g$ is
an Einstein metric conformal to a self-dual (or, equivalently, Bochner-flat)
K\"ahler metric $(g_+,J_+)$. We learn from \cite{Bryant,ACG} that such a
K\"ahler metric must be either orthotoric or of Calabi type over a Riemann
surface $(\Sigma, g_{\Sigma})$ of constant Gauss curvature. In both cases the
metric is locally ambitoric by the examples discussed in the previous
subsections.

In the generic case, the result follows from
Propositions~\ref{p:AHE-rough},~\ref{p:AK-bach-flat}
and~\ref{p:generic-ein}. 

The last part follows directly from Proposition~\ref{p:AK-bach-flat}. \qed

\section{Local classification of ambitoric structures} \label{s:loc-class}

To classify ambitoric structures on the dense open set where the (local) torus
action is free (cf.~\cite{Guillemin} for the toric case), let $(M,c,J_+,J_-)$
denote a connected, simply connected, ambihermitian $4$-manifold and
$\Kmap\colon\tor \to C^\infty(M,TM)$ a $2$-dimensional family of pointwise
linearly independent vector fields. Let $\eps\in \Wedge^2\tor^*$ be a fixed
area form.

\subsection{Holomorphic lagrangian torus actions}\label{s:hlag-torus}

We denote by $K_\lambda$ the image of $\lambda\in \tor$ under $\Kmap$, by
$\tor_M$ the rank $2$ subbundle of $TM$ spanned by these vector fields, and by
$\boldsymbol\theta\in \Omega^1(M,\tor)$ the $\tor$-valued $1$-form vanishing
on $\tor_M^\perp\subset TM$ with $\boldsymbol\theta(K_\lambda)=\lambda$.

We first impose the condition that $\Kmap$ is an infinitesimal
$J_\pm$-holomorphic and $\omega_\pm$-isotropic (hence lagrangian) torus
action. We temporarily omit the $\pm$ subscript, since we are studying the
complex structures separately. The lagrangian condition means that $\tor_M$ is
orthogonal and complementary to its image $J\tor_M$ under the complex
structure $J$; thus $J\tor_M=\tor_M^\perp$.  The remaining conditions
(including the integrability of $J$) imply that the vector fields
$\{K_\lambda:\lambda\in\tor\}$ and $\{JK_\lambda:\lambda\in\tor\}$ all commute
under Lie bracket, or equivalently that the dual $1$-forms $\boldsymbol\theta$
and $J\boldsymbol\theta$ are both closed. Thus we may write
$\boldsymbol\theta=\d\ang$ with $\d\d^c\ang=0$, where $\d^c\ang=J\d\ang$ and
the ``angular coordinate'' $\ang\colon M\to \tor$ is defined up to an additive
constant. Conversely, if $\d\d^c\ang=0$ then $\d\ang-\iI \d^c\ang$ generates a
closed differential ideal $\Omega^{(1,0)}$ for $J$ so that $J$ is integrable.

\subsection{Regular ambitoric structures}

We now combine this analysis for the complex structures $J_\pm$. It follows
that $J_+\tor_M$ and $J_-\tor_M$ coincide and that $\tor_M$ is preserved by
the involution $-J_+J_-$.  Since the eigenbundles (pointwise eigenspaces) of
$-J_+J_-$ are $J_\pm$-invariant, $\tor_M$ cannot be an eigenbundle and hence
decomposes into $+1$ and $-1$ eigenbundles $\ximap_M$ and $\etamap_M$: the
line bundles $\ximap_M$, $\etamap_M$, $J_+\ximap_M=J_-\ximap_M$ and
$J_+\etamap_M=J_-\etamap_M$ provide an orthogonal direct sum decomposition of
$TM$.

We denote the images of $\ximap_M$ and $\etamap_M$ under $\d\ang$ by $\ximap$
and $\etamap$ respectively. We thus obtain a smooth map
$(\ximap,\etamap)\colon M\to \Proj(\tor)\times
\Proj(\tor)\setminus\Delta(\tor)$ where $\Delta(\tor)$ is the diagonal.

The derivatives $\d\ximap\in\Omega^1(M,\ximap^*T\Proj(\tor))$ and
$\d\etamap\in\Omega^1(M,\etamap^*T\Proj(\tor))$ vanish on $\tor_M$ (since
$\ximap$ and $\etamap$ are $\tor$-invariant).  In fact, more is true: they
span orthogonal directions in $T^*M$. (Note that $\ximap^*T\Proj(\tor)\cong
\mathrm{Hom}(\ximap,\underline \tor/\ximap)$, with
$\underline\tor:=M\times\tor$, is a line bundle on $M$, and similarly for
$\etamap^*T\Proj(\tor)$.)

\begin{lemma}\label{l:regular} $\d\ximap$ vanishes on $J_\pm\etamap_M$ and
$\d\etamap$ vanishes on $J_\pm\ximap_M$; hence $0=\d\ximap\wedge
\d\etamap\in\Omega^2(M,\ximap^*T\Proj(\tor)\otimes\etamap^*T\Proj(\tor))$ only
on the subset of $M$ where $\d\ximap=0$ or $\d\etamap=0$.
\end{lemma}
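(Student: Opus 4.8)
The plan is to read off the vanishing from the two integrability conditions of Section~\ref{s:hlag-torus}, namely that both $J_+\boldsymbol\theta$ and $J_-\boldsymbol\theta$ are closed (the condition $\d\d^c\ang=0$ for $J=J_\pm$), by working in a coframe adapted to the four-fold splitting of $TM$. First I would pick local nonvanishing sections $\xi,\eta\colon M\to\tor$ of the tautological lines $\ximap,\etamap\subset\underline\tor$, so that $K_\xi$ spans $\ximap_M$ and $K_\eta$ spans $\etamap_M$. Let $\theta^1,\theta^2$ be the scalar $1$-forms dual to $K_\xi,K_\eta$ and vanishing on the remaining three summands, and set $\theta^3:=J_+\theta^1$, $\theta^4:=J_+\theta^2$, which are dual to $J_+\ximap_M$ and $J_+\etamap_M$. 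Since $\boldsymbol\theta$ vanishes on $\tor_M^\perp$ with $\boldsymbol\theta(K_\lambda)=\lambda$, we have $\boldsymbol\theta=\xi\,\theta^1+\eta\,\theta^2$, where $\ximap=[\xi]$ and $\etamap=[\eta]$.

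Next I would record how $J_\pm$ act on this coframe. From the eigenbundle relations for $-J_+J_-$ (so that $J_-=J_+$ on $T_+M=\ximap_M\oplus J_+\ximap_M$ and $J_-=-J_+$ on $T_-M=\etamap_M\oplus J_+\etamap_M$), a direct check gives $J_-\theta^1=J_+\theta^1=\theta^3$ and $J_+\theta^2=\theta^4=-J_-\theta^2$, whence
\[
J_+\boldsymbol\theta=\xi\,\theta^3+\eta\,\theta^4,\qquad
J_-\boldsymbol\theta=\xi\,\theta^3-\eta\,\theta^4.
\]
The crux of the argument is now to add and subtract these identities: since $J_+\boldsymbol\theta$ and $J_-\boldsymbol\theta$ are both closed, so are their half-sum $\xi\,\theta^3$ and half-difference $\eta\,\theta^4$, and the $\xi$- and $\eta$-parts decouple into
\[
\d(\xi\,\theta^3)=0,\qquad \d(\eta\,\theta^4)=0.
\]

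From here I would extract the statement about $\d\ximap$ by contraction. Let $\eta^*\in\tor^*$ be the pointwise covector dual to the basis $\{\xi,\eta\}$, so $\langle\eta^*,\xi\rangle=0$ and $\langle\eta^*,\eta\rangle=1$. Contracting the identically vanishing $\tor$-valued $2$-form $\d(\xi\,\theta^3)=\d\xi\wedge\theta^3+\xi\,\d\theta^3$ with the covector field $\eta^*$ annihilates the second term (as $\langle\eta^*,\xi\rangle=0$) and leaves $\langle\eta^*,\d\xi\rangle\wedge\theta^3=0$; this is legitimate precisely because the form already vanishes, so no derivative of $\eta^*$ intervenes. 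Under the identification $\ximap^*T\Proj(\tor)\cong\mathrm{Hom}(\ximap,\underline\tor/\ximap)$, the scalar $1$-form $\langle\eta^*,\d\xi\rangle$ represents $\d\ximap$ (up to the nonvanishing scale of $\xi$), so $\d\ximap\wedge\theta^3=0$. As $\theta^3$ is nowhere zero, this forces $\d\ximap$ to be a functional multiple of $\theta^3$; in particular it vanishes on $\ker\theta^3$, which contains $J_+\etamap_M=J_-\etamap_M$. The symmetric contraction of $\d(\eta\,\theta^4)=0$ with $\xi^*$ gives $\d\etamap\wedge\theta^4=0$, so $\d\etamap$ is a multiple of $\theta^4$ and vanishes on $J_\pm\ximap_M$.

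The wedge assertion is then immediate: $\d\ximap\wedge\d\etamap$ is a functional multiple of $\theta^3\wedge\theta^4$, which is nonzero because $\theta^3$ and $\theta^4$ are dual to the independent directions $J_+\ximap_M$ and $J_+\etamap_M$; hence $\d\ximap\wedge\d\etamap$ vanishes exactly where one of the two multiples does, i.e.\ where $\d\ximap=0$ or $\d\etamap=0$. I expect the only genuine obstacle to be the sign bookkeeping in computing the $J_\pm$-action on the coframe, together with the clean identification of $\langle\eta^*,\d\xi\rangle$ with $\d\ximap$; once the add/subtract decoupling is in hand, the rest is formal.
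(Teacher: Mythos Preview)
Your proposal is correct and follows essentially the same route as the paper: your closed forms $\xi\,\theta^3$ and $\eta\,\theta^4$ are precisely $\tfrac12(J_++J_-)\d\ang$ and $\tfrac12(J_+-J_-)\d\ang$, and your contraction with the dual covector $\eta^*$ corresponds to the paper's contraction with $\eps(u,\cdot)$ for a section $u$ of $\ximap$. The only difference is cosmetic---the paper works invariantly without introducing an explicit coframe $\theta^1,\ldots,\theta^4$---so the add/subtract decoupling you identify as the crux is exactly the paper's key step.
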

\begin{proof} The $1$-form $(J_++J_-)\d\ang$ is closed, vanishes on
$J_\pm\etamap_M$ and $\tor_M$, and takes values in $\ximap\subset
\underline\tor$ (it is nonzero on $J_\pm\ximap_M$). Hence for any section
$\xms$ of $\ximap\subset M\times \tor$,
\begin{equation*}
0=\d\,\bigl( \eps(\xms,(J_++J_-)\d\ang)\bigr)
=\eps(\d\xms\wedge (J_++J_-)\d\ang)
\end{equation*}
and so $(\d\xms\mod \ximap)\wedge (J_++J_-)\d\ang=0$. This implies that
$\d\ximap$ is a multiple $F$ of $\frac12(J_++J_-)\d\ang\in \Omega^1(M,\ximap)$.
Similarly $\d\etamap$ is a multiple $G$ of $\frac12(J_+-J_-)\d\ang$.
\end{proof}
\begin{cor} If $(M,g_\pm,J_\pm,\omega_\pm)$ is ambitoric with
$(\ximap,\etamap)$ as above, then there is a dense open set $M^0$ such that on
each connected component, the ambitoric structure is either of Calabi type, or
$\d\ximap\wedge \d\etamap$ is nonvanishing.
\end{cor}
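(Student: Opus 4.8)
The plan is to translate the algebraic dichotomy of Lemma~\ref{l:regular} into a geometric one, identifying the vanishing of $\d\ximap$ or $\d\etamap$ with the Calabi-type condition. By that lemma, on the dense open set where the torus action is free we have $\d\ximap=F\cdot\tfrac12(J_++J_-)\d\ang$ and $\d\etamap=G\cdot\tfrac12(J_+-J_-)\d\ang$ for smooth functions $F,G$, so $\d\ximap\wedge\d\etamap$ is nonvanishing precisely where $FG\neq0$. It therefore suffices to show that on any open set where $F\equiv0$ (resp.\ $G\equiv0$) the structure is of Calabi type, and then to assemble a dense open set on which $F$ and $G$ are each either nowhere zero or locally identically zero.

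First I would treat the geometric core. Suppose $F\equiv0$ on a connected open set $N$; then $\ximap$ is constant there, so $\ximap_M$ is spanned by $K:=\Kmap(\lambda)$ for a fixed $\lambda\in\tor$, a nowhere-zero Killing field on $N$ (the action being free). Since $\ximap_M$ is the $+1$-eigenbundle of $-J_+J_-$ we have $J_+K=J_-K$, and using that $J_+$ and $J_-$ commute (Lemma~\ref{doubly-almost-complex}) a direct check shows $J_-=J_+$ on $\spn{\{K,J_+K\}}=\ximap_M\oplus J_+\ximap_M$ and $J_-=-J_+$ on the orthogonal complement $\etamap_M\oplus J_+\etamap_M$. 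Thus $J_-$ coincides with the almost-complex structure of Definition~\ref{d:calabi-type}, and since $(g_-,J_-)$ is K\"ahler with $g_-$ conformal to $g_+$, the pair $(g_+,J_+)$ is of Calabi type with respect to $K$; hence $(c,J_+,J_-)$ is ambitoric of Calabi type in the sense of Definition~\ref{d:ambitoric-calabi-type}. The case $G\equiv0$ is identical after exchanging the roles of $\ximap$ and $\etamap$ (and the sign convention on $J_-$), while the case $F\equiv G\equiv0$ gives a local K\"ahler product, the degenerate Calabi type with $a=0$.

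To produce $M^0$, set $M_\xi:=\{F\neq0\}\cup\mathrm{int}\{F=0\}$ and $M_\eta:=\{G\neq0\}\cup\mathrm{int}\{G=0\}$. Each is open, and dense because its complement lies in the topological boundary $\partial\{F=0\}$ (resp.\ $\partial\{G=0\}$), which is nowhere dense. Within $M_\xi$ the open set $\{F\neq0\}$ is also closed, since its closure misses $\mathrm{int}\{F=0\}$; so $\{F\neq0\}$ and $\mathrm{int}\{F=0\}$ partition $M_\xi$ into clopen pieces, and $F$ is either nowhere zero or identically zero on each connected component of $M_\xi$, and likewise $G$ on components of $M_\eta$. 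Taking $M^0:=M_\xi\cap M_\eta$, a dense open set, any connected component lies in a single component of $M_\xi$ and of $M_\eta$, so exactly one of the four combinations of ``$F$ nowhere/identically zero'' and ``$G$ nowhere/identically zero'' holds on it. When $F$ and $G$ are both nowhere zero, $\d\ximap\wedge\d\etamap\neq0$; in the three remaining cases the previous paragraph gives Calabi type.

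I expect the main obstacle to be the geometric verification that the prescribed $J_-$ genuinely matches the Calabi-type almost-complex structure---keeping track of the sign conventions and checking that the decomposition $TM=\ximap_M\oplus\etamap_M\oplus J_+\ximap_M\oplus J_+\etamap_M$ interacts correctly---rather than the routine point-set topology used to construct $M^0$.
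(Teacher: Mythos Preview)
Your proof is correct and follows the same route as the paper, which disposes of the corollary in a single sentence: if $\ximap$ and $\etamap$ are functionally dependent on a connected open set, then one of them is a constant $[\lambda]\in\Proj(\tor)$ and the structure has Calabi type with respect to $K_\lambda$. You have simply unpacked both halves of that sentence---the point-set topology producing $M^0$, and the verification via the eigenbundle description that a constant $\ximap$ (or $\etamap$) forces $J_-$ to match the Calabi-type almost-complex structure of Definition~\ref{d:calabi-type}---which the paper leaves to the reader.
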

Indeed, if $\ximap$ and $\etamap$ are functionally dependent on an connected
open set $U$, then one of the two is a constant $[\lambda]\in \Proj(\tor)$ and
$U$ has Calabi type with respect to $K_\lambda$.
\begin{defn}\label{regular} If $\d\ximap\wedge \d\etamap$ vanishes nowhere, we
say $(M,c,J_+,J_-,\boldsymbol K)$ is \emph{regular}.
\end{defn}
In the regular case $\d\ximap=\frac12 F(\ximap)(J_++J_-)\d\ang$ and
$\d\etamap=\frac12 G(\etamap)(J_+-J_-)\d\ang$, where $F$, $G$ are local
sections of $\cO(3)$ over $\Proj(\tor)$; more precisely $F(\ximap)\colon M\to
\mathrm{Hom}(\ximap, \ximap^*T\Proj(\tor))$ and similarly for $G(\etamap)$,
but $T\Proj(\tor)\cong\cO(2)$ using $\eps$. We let $\ximap^\natural$ denote
the composite of $\ximap$ with the natural section of $\cO(1)\otimes\tor$ over
$\Proj(\tor)$, and similarly $\etamap^\natural$. We construct from these
$J_\pm$-related orthogonal $1$-forms
\begin{equation*}
\frac{\d\ximap}{F(\ximap)}, \qquad
\frac{\eps(\d\ang,\etamap^\natural)}{\eps(\ximap^\natural,\etamap^\natural)},
\qquad \frac{\d\etamap}{G(\etamap)},\qquad
\frac{\eps(\ximap^\natural,\d\ang)}{\eps(\ximap^\natural,\etamap^\natural)}
\end{equation*}
(with values in the line bundles $\ximap^*$ or $\etamap^*$) which may be used
to write any $\tor$-invariant metric $g$ in the conformal class as
\begin{equation*}
\frac{\d\ximap^2}{F(\ximap)U(\ximap,\etamap)} +
\frac{\d\etamap^2}{G(\etamap)V(\ximap,\etamap)} +
\frac{F(\ximap)}{U(\ximap,\etamap)}
\biggl(\frac{\eps(\d\ang,\etamap^\natural)}
{\eps(\ximap^\natural,\etamap^\natural)}\biggr)^2 +
\frac{G(\etamap)} {V(\ximap,\etamap)}
\biggl(\frac{\eps(\ximap^\natural,\d\ang)}
{\eps(\ximap^\natural,\etamap^\natural)}\biggr)^2.
\end{equation*}
Here $U$ and $V$ are local sections of $\cO(1,0)$ and $\cO(0,1)$ over
$\Proj(\tor)\times\Proj(\tor)\setminus\Delta(\tor)$.

More explicitly, in a neighbourhood of any point, a basis $\lambda=1,2$ for
$\tor$ may be chosen to provide an affine chart for $\Proj(\tor)$ so that
$K_\xi:=\xi K_1-K_2$ and $K_\eta:=\eta K_1-K_2$ are sections of $\ximap_M$ and
$\etamap_M$ respectively, where $\xi>\eta$ are functionally independent
coordinates on $M$. The components of $\ang\colon M\to \tor$ in this basis
complete a coordinate system $(\xi, \eta, t_1, t_2)$ with coordinate vector
fields
\begin{equation*}
\frac{\partial}{\partial \xi} = \frac{J_+K_\xi}{F(\xi)}, \qquad
\frac{\partial}{\partial\eta} = \frac{J_+K_\eta}{G(\eta)}, \qquad
\frac{\partial}{\partial t_1} = K_1, \qquad \frac{\partial}{\partial t_2} =
K_2.
\end{equation*}
Replacing $(J_+,J_-)$ with $(-J_+,-J_-)$ if necessary, we can assume without
loss that $F$ and $G$ (now functions of one variable) are both positive, and
thus obtain the following description of $\tor$-invariant ambihermitian
metrics in the conformal class.

\begin{lemma}\label{l:ambihermitian-toric}
An ambihermitian metric $(g,J_+,J_-)$ which is regular with respect to a
$2$-dimensional family of commuting, $J_\pm$-holomorphic lagrangian Killing
vector fields is given locally by
\begin{gather}\label{g-eqn}
g = \frac{\d\xi^2}{F(\xi)U(\xi,\eta)} + \frac{\d\eta^2}{G(\eta)V(\xi,\eta)}
+ \frac{F(\xi)(\d t_1 + \eta\, \d t_2)^2}{U(\xi,\eta)\,(\xi-\eta)^2}
+ \frac{G(\eta)(\d t_1 + \xi\, \d t_2)^2}{V(\xi,\eta)\,(\xi-\eta)^2},\\
\label{omega-eqn} \omega_\pm^g =
\frac{\d\xi \wedge (\d t_1 + \eta\,\d t_2)}{U (\xi, \eta)\, (\xi - \eta)} \pm
\frac{\d\eta \wedge (\d t_1 + \xi\,\d t_2)}{V (\xi, \eta)\, (\xi - \eta)},
\displaybreak[0]\\
\label{Jpm}
\d^c_+\xi = \d^c_- \xi = F (\xi)\frac{\d t_1 + \eta \, \d t_2}{\xi-\eta},\quad
\d^c_+\eta = -\d^c_-\eta = G (\eta)\frac{\d t_1 + \xi \, \d t_2}{\xi-\eta}
\end{gather} 
for some positive functions $U$ and $V$ of two variables, and some positive
functions $F$ and $G$ of one variable. \textup(Here and later,
$\d^c_{\pm}h=J_\pm \d h$ for any function $h$.\textup)
\end{lemma}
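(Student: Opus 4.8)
The plan is to verify the three displayed formulas directly in the coordinates $(\xi,\eta,t_1,t_2)$ already introduced, taking as given the coordinate vector fields $\partial_\xi = J_+K_\xi/F$, $\partial_\eta = J_+K_\eta/G$, $\partial_{t_1}=K_1$, $\partial_{t_2}=K_2$ and the resulting fact that $\d\xi,\d\eta,\d t_1,\d t_2$ is the dual coframe, with $\d t_i$ vanishing on $J_+\tor_M=\spn{\{\partial_\xi,\partial_\eta\}}$ and $\d\xi,\d\eta$ vanishing on $\tor_M=\spn{\{K_1,K_2\}}$.

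First I would pin down the action of $J_\pm$. Since $K_\xi\in\ximap_M$ and $K_\eta\in\etamap_M$ are the $\pm1$-eigenvectors of $-J_+J_-$, we have $J_-K_\xi=J_+K_\xi=F\partial_\xi$ and $J_-K_\eta=-J_+K_\eta=-G\partial_\eta$, whence, using $J_\pm^2=-\Id$, $J_\pm\partial_\xi=-K_\xi/F$ and $J_+\partial_\eta=-J_-\partial_\eta=-K_\eta/G$. Writing $K_1=(K_\xi-K_\eta)/(\xi-\eta)$ and $K_2=(\eta K_\xi-\xi K_\eta)/(\xi-\eta)$ then gives $J_\pm K_1$ and $J_\pm K_2$ as explicit combinations of $\partial_\xi,\partial_\eta$; dualising (evaluating $\d^c_\pm\xi=J_\pm\d\xi$ and $\d^c_\pm\eta=J_\pm\d\eta$ against the coframe) yields \eqref{Jpm}, the equality $\d^c_+\xi=\d^c_-\xi$ reflecting $J_+=J_-$ on $\ximap_M$ and the sign reversal $\d^c_+\eta=-\d^c_-\eta$ reflecting $J_+=-J_-$ on $\etamap_M$.

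Next I would read off the metric. The four line bundles $\ximap_M,\etamap_M,J_+\ximap_M=\spn{\{\partial_\xi\}},J_+\etamap_M=\spn{\{\partial_\eta\}}$ are mutually $c$-orthogonal, so any $\tor$-invariant $g$ in the conformal class is block-diagonal in the frame $\partial_\xi,\partial_\eta,K_\xi,K_\eta$; orthogonality of $J_\pm$ forces $g(\partial_\xi,\partial_\xi)=g(K_\xi,K_\xi)/F^2$ and $g(\partial_\eta,\partial_\eta)=g(K_\eta,K_\eta)/G^2$. Defining positive $\tor$-invariant functions $U,V$ by $g(K_\xi,K_\xi)=F/U$ and $g(K_\eta,K_\eta)=G/V$, and noting that $(\d t_1+\eta\,\d t_2)/(\xi-\eta)$ and $(\d t_1+\xi\,\d t_2)/(\eta-\xi)$ are precisely the $1$-forms dual to $K_\xi,K_\eta$, one assembles \eqref{g-eqn}; the resulting $\d\xi^2$-coefficient $1/(FU)$ is then automatic. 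Finally \eqref{omega-eqn} follows from $\omega_\pm=g(J_\pm\cdot,\cdot)$ by combining the previous two steps: the only nonzero pairings are $\omega_\pm(\partial_\xi,K_\xi)=-1/U$ and $\omega_\pm(\partial_\eta,K_\eta)=\mp1/V$, and the relative sign between the two terms of $\omega_+$ versus $\omega_-$ is again dictated by $J_-=\pm J_+$ on $\ximap_M,\etamap_M$.

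The routine but genuinely fiddly part is the sign and normalisation bookkeeping: fixing the orientation of the affine chart on $\Proj(\tor)$ so that $\d\xi(\partial_\xi)=+1$, the convention in $\d^c_\pm=J_\pm\d$, and the normalisation making $F,G>0$ (the earlier replacement of $(J_+,J_-)$ by $(-J_+,-J_-)$) must all be chosen coherently, so that the signs throughout \eqref{g-eqn}--\eqref{Jpm} come out simultaneously as stated. Once these conventions are fixed the three formulas are verified by the computations above, and positivity of $U,V$ expresses exactly that $g$ is a genuine positive-definite metric.
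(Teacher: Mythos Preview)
Your proposal is correct and follows essentially the same route as the paper. The paper arrives at the lemma by first writing the metric in an invariant form (with $F,G,U,V$ as sections of line bundles over $\Proj(\tor)$ and $\Proj(\tor)\times\Proj(\tor)$) and then specializing to an affine chart, whereas you work directly in the coordinate frame $(\partial_\xi,\partial_\eta,K_\xi,K_\eta)$; but the underlying computation---using the orthogonal splitting into the four $J_\pm$-invariant lines and the eigenspace relations $J_-=\pm J_+$ on $\ximap_M,\etamap_M$---is identical, and your caveat about sign and normalisation bookkeeping is exactly the point.
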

We now impose the condition that $(c,J_+)$ and $(c,J_-)$ admit
$\tor$-invariant K\"ahler metrics $g_+$ and $g_-$. Let $f$ be the conformal
factor relating $g_\pm$ by $g_- = f^2 g_+$. Clearly $f$ is $\tor$-invariant
and so, therefore, is the metric
\begin{equation*}
g_0 := f \, g_+ = f^{-1} \, g_-
\end{equation*}
which we call the \emph{barycentric metric} of the ambitoric structure.  The
Lee forms, $\theta_\pm^{0}$, of $(g_0,J_\pm)$ are given by $\theta_\pm^0= \mp
\frac{1}{2} \log f$.  Conversely, suppose there is an invariant ambihermitian
metric $g_0$ in the conformal class whose Lee forms $\theta_\pm^0$ satisfy
\begin{gather} \label{mean}
\theta^0_+ + \theta^0_- = 0\\
\label{ck}
d (\theta^0_+ - \theta^0_-) = 0.
\end{gather}
Then writing locally $\theta^0_ + = -\frac 12 \d\log{f} = - \theta^0_-$ for
some positive function $f$, the metrics $g_\pm := f^{\mp1} g_0$ are K\"ahler
with respect to $J_\pm$ respectively.

Thus, regular ambitoric conformal structures are defined by ambihermitian
metrics $g_0$ given locally by Lemma~\ref{l:ambihermitian-toric}, and whose
Lee forms $\theta_\pm^0$ satisfy \eqref{mean} and \eqref{ck}.

\begin{lemma} \label{l:ambitoric} For an ambihermitian metric given by
Lemma~\textup{\ref{l:ambihermitian-toric}} the relation \eqref{mean} is
satisfied \textup(with $g_0=g$\textup) iff $U = U (\xi)$ is independent of
$\eta$ and $V = V(\eta)$ is independent of $\xi$. In this case \eqref{ck} is
equivalent to $U (\xi)^2 = R(\xi)$ and $V(\eta)^2 = R(\eta)$, where $R(s) =
r_0 s^2 + 2 r_1 s + r_2$ is a polynomial of degree at most two.

Under both conditions, the conformal factor $f$ with $g_-=f^2g_+$ is
given---up to a constant multiple---by
\begin{equation}\label{f-gen}
f(\xi,\eta) = \frac{R (\xi)^{1/2} R (\eta)^{1/2} + R (\xi, \eta)}{\xi-\eta}
\end{equation}
where $R (\xi, \eta) =r_0 \xi\eta + r_1 (\xi+\eta) + r_2$ is the
``polarization'' of $R$.
\end{lemma}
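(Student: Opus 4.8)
The plan is to compute the two Lee forms $\theta^0_\pm$ of $(g,J_\pm)$ directly from the explicit K\"ahler forms \eqref{omega-eqn}, and then to read \eqref{mean} and \eqref{ck} as differential conditions on $U$ and $V$. Abbreviating the coefficients of $\omega^g_\pm$ by $A_1=1/\bigl(U(\xi-\eta)\bigr)$, $A_2=\eta A_1$, $B_1=1/\bigl(V(\xi-\eta)\bigr)$, $B_2=\xi B_1$, so that $\omega^g_\pm=A_1\,\d\xi\wedge\d t_1+A_2\,\d\xi\wedge\d t_2\pm\bigl(B_1\,\d\eta\wedge\d t_1+B_2\,\d\eta\wedge\d t_2\bigr)$, I would first differentiate to obtain $\d\omega^g_\pm=(\pm\partial_\xi B_1-\partial_\eta A_1)\,\d\xi\wedge\d\eta\wedge\d t_1+(\pm\partial_\xi B_2-\partial_\eta A_2)\,\d\xi\wedge\d\eta\wedge\d t_2$. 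By torus invariance $\theta^0_\pm$ is $\tor$-invariant; any $\d t_1$- or $\d t_2$-component would contribute terms in $\d\xi\wedge\d t_1\wedge\d t_2$ and $\d\eta\wedge\d t_1\wedge\d t_2$ to $\theta^0_\pm\wedge\omega^g_\pm$, and since $A_2=\eta A_1$, $B_2=\xi B_1$ and $\xi\neq\eta$ these cannot cancel, forcing $\theta^0_\pm=a_\pm\,\d\xi+b_\pm\,\d\eta$ with no angular part. The defining relation $\d\omega^g_\pm=-2\theta^0_\pm\wedge\omega^g_\pm$ then becomes a $2\times2$ linear system in $(a_\pm,b_\pm)$ whose determinant is $4A_1B_1(\xi-\eta)\neq0$, so it has a unique solution.

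Solving both systems and forming $\sigma:=\theta^0_++\theta^0_-$, I expect condition \eqref{mean} ($\sigma=0$) to reduce, after eliminating using the factors $\eta$ and $\xi$ relating the two rows and cancelling the overall $(\xi-\eta)$, exactly to $\partial U/\partial\eta=0$ and $\partial V/\partial\xi=0$; this is the first assertion. Substituting $U=U(\xi)$ and $V=V(\eta)$ back into the solution gives the clean expressions $\theta^0_+=\tfrac12\frac{V}{U(\xi-\eta)}\,\d\xi-\tfrac12\frac{U}{V(\xi-\eta)}\,\d\eta$ and $\theta^0_-=-\theta^0_+$, which one checks solve the respective systems, so that the equivalence holds in both directions.

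Under \eqref{mean}, condition \eqref{ck} is simply $\d\theta^0_+=0$, a single scalar equation. Writing $\phi=U^2$ and $\psi=V^2$, I expect it to take the form $\tfrac12(\xi-\eta)\bigl(\phi'(\xi)+\psi'(\eta)\bigr)+\psi(\eta)-\phi(\xi)=0$. The decisive step is to differentiate this once in $\xi$ and then in $\eta$: all first-order data drop out and one is left with $\phi''(\xi)=\psi''(\eta)$, whose sides depend on disjoint variables and hence equal a common constant $2r_0$. Thus $\phi$ and $\psi$ are quadratics with equal leading coefficient, and inserting $\phi(\xi)=r_0\xi^2+2p_1\xi+p_2$, $\psi(\eta)=r_0\eta^2+2q_1\eta+q_2$ and collecting powers of $\xi,\eta$ forces $p_1=q_1$ and $p_2=q_2$. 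Hence $U^2=R(\xi)$ and $V^2=R(\eta)$ for a single $R(s)=r_0s^2+2r_1s+r_2$ of degree at most two, as claimed.

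Finally, since $\theta^0_+=-\tfrac12\,\d\log f$, the conformal factor follows by integrating $\d\log f=-2\theta^0_+$ with $U=R(\xi)^{1/2}$, $V=R(\eta)^{1/2}$; I would verify the proposed closed form $f=\bigl(R(\xi)^{1/2}R(\eta)^{1/2}+R(\xi,\eta)\bigr)/(\xi-\eta)$ by direct differentiation, the two computations being made transparent by the polarization identities $R(\xi)-R(\xi,\eta)=\tfrac12(\xi-\eta)R'(\xi)$ and $R(\xi,\eta)-R(\eta)=\tfrac12(\xi-\eta)R'(\eta)$, which reduce $\partial_\xi\log f$ and $\partial_\eta\log f$ to $-R(\eta)^{1/2}/\bigl(R(\xi)^{1/2}(\xi-\eta)\bigr)$ and $R(\xi)^{1/2}/\bigl(R(\eta)^{1/2}(\xi-\eta)\bigr)$ respectively. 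The main difficulty is organisational rather than conceptual: carrying the Lee-form bookkeeping through without sign errors, and recognising the double-differentiation trick that rigidifies \eqref{ck} into $\phi''=\psi''=\mathrm{const}$ and so produces the single common quadratic $R$.
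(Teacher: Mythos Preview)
Your proposal is correct and follows essentially the same route as the paper: compute the Lee forms explicitly, read off \eqref{mean} as $U_\eta=V_\xi=0$, then analyse the closedness condition \eqref{ck} as a separable ODE in $U^2$ and $V^2$, and finally verify \eqref{f-gen} via polarization identities. The only minor difference is tactical: where the paper differentiates \eqref{ck} twice in $\xi$ (and in $\eta$) to obtain $(U^2)'''=0=(V^2)'''$ and then restricts to the diagonal $\xi=\eta$ to identify the two quadratics, you take one mixed derivative $\partial_\xi\partial_\eta$ to get $\phi''(\xi)=\psi''(\eta)=2r_0$ directly and then back-substitute to match the remaining coefficients---a slightly slicker variant of the same separation-of-variables argument.
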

\begin{proof} The Lee forms $\theta^g_\pm$ are given by $2\theta^g_\pm =
u_\pm \d\xi + v_\pm \d\eta$, with
\begin{equation*}
u_\pm = \frac{V_{\xi}}{V} \pm \frac{V}{(\xi -\eta)  U}, \qquad
v_\pm = \frac{U_{\eta}}{U}\mp \frac{U}{(\xi -\eta)  V}.
\end{equation*}
In particular, $u_+ + u_- = 2 V_{\xi}/V$ and $v_+ + v_- = 2 U_{\eta}/U$. It
follows that $\theta^g_+ + \theta^g_- = 0$ iff $U_{\eta} = 0$ and $V_{\xi} =
0$. This proves the first part of the lemma.

If (\ref{mean}) is satisfied, then 
\begin{equation*}
\theta^g_+ = \frac 12 \Bigl(
\frac{V (\eta)}{(\xi-\eta) U (\xi)} \, \d\xi
- \frac{U (\xi)}{(\xi - \eta) V (\eta)}\, \d\eta \Bigr).
\end{equation*}
It follows that $\d\theta^g_+ = 0$ iff
\begin{equation}\label{UV-eqn}
2 U^2(\xi)- (\xi-\eta) (U^2)'(\xi) = 2 V^2 (\eta) + (\xi-\eta) (V^2)'(\eta)
\end{equation}
where $U^2(\xi)=U(\xi)^2$ and $V^2(\eta)=V(\eta)^2$. Differentiating twice
with respect to $\xi$, we obtain $(\xi-\eta) (U^2)'''(\xi) = 0$, and similarly
$(\xi-\eta)(V^2)'''(\eta) = 0$. Thus $U^2$ and $V^2$ are both polynomials of
degree at most two. We may now set $\xi=\eta$ in~\eqref{UV-eqn} to conclude
that $U^2$ and $V^2$ coincide. Without loss of generality, we assume that $U$
and $V$ are both positive everywhere, so that $U(\xi) = R (\xi)^{1/2}$ and
$V(\eta) = R(\eta)^{1/2}$ for a polynomial $R$ of degree at most two. By using
the identity
\begin{equation*}
R (\xi) - R (\eta) - \tfrac{1}{2} (\xi - \eta) (R'(\xi) + R'(\eta)) \equiv 0
\end{equation*}
we easily check~\eqref{f-gen}. 
\end{proof}

Note that the quadratic $R$ is, more invariantly, a homogeneous polynomial of
degree $2$ on $\tor$ (an algebraic section of $\cO(2)$ over $\Proj(\tor)$).
However the parametrization of ambitoric structures by $R$ and the local
sections $F$ and $G$ of $\cO(3)$ is not effective because of the SL$(\tor)$
symmetry and homothety freedom in the metric. Modulo this freedom, there are
only three distinct cases for $R$: no real roots ($r_1^2 < r_0r_2$), one real
root ($r_1^2=r_0r_2$) and two real roots ($r_1^2>r_0r_2$). We shall later
refer to these cases as \emph{elliptic, parabolic} and \emph{hyperbolic}
respectively.

\begin{rem}\label{rem-R} The emergence of a homogeneous polynomial of degree
$2$ on $\tor$ merits a more conceptual explanation. It also seems to be
connected with a curious symmetry breaking phenomenon between $\omega_+$ and
$\omega_-$. In~\eqref{omega-eqn}, $\omega_\pm^g$ are interchanged on replacing
$V$ by $-V$. This is compatible with the equality $U^2=V^2$ derived in the
above lemma. However, the choice of square root of $R$ to satisfy positivity
of $g$ breaks this symmetry.
\end{rem}

\subsection{Local classification in adapted coordinates}

The square root in the general form of an ambitoric metric is somewhat
awkward: although we are interested in real riemannian geometry, the complex
analytic continuation of the metric will be branched. This suggests pulling
back the metric to a branched cover and making a coordinate change to
eliminate the square root. This is done by introducing rational functions
$\rho$ and $\sigma$ of degree $2$ such that
\begin{equation}\label{rationalize}
R(\sigma(z))=\rho(z)^2.
\end{equation}
If we then write $\xi=\sigma(x)$, $\eta=\sigma(y)$,
$A(x)=F(\sigma(x))\rho(x)/\sigma'(x)^2$ and
$B(y)=G(\sigma(y))\rho(y)/\sigma'(y)^2$, the barycentric metric may be
rewritten as
\begin{equation}\label{g0-generic}\begin{split}
g_0&=\frac{\d x^2}{A(x)}+\frac{\d y^2}{B(y)}\\
&\quad+A(x)\left(\frac{\sigma'(x)(\d t_1+\sigma(y)\d t_2)}
{(\sigma(x)-\sigma(y))\rho(x)}\right)^2
+B(y)\left(\frac{\sigma'(y)(\d t_1+\sigma(x)\d t_2)}
{(\sigma(x)-\sigma(y))\rho(y)}\right)^2.
\end{split}\end{equation}

There are many solutions to~\eqref{rationalize}. We seek a family that covers
all three cases for $R$ and yields metrics that are amenable to computation.
We do this by solving the equation geometrically.  Let $W$ be a
$2$-dimensional real vector space equipped (for convenience) with a symplectic
form $\kappa\in\Wedge^2 W^*$. Thus we have to do with the geometry of the
projective line $\Proj(W)$ and the representation theory of
$\mathfrak{sl}(W)$, which we summarize in Appendix~\ref{s:plt}
(cf.~\cite{Olver}).  In particular, the space $S^2W^*$ of quadratic forms $p$
on $W$ is a Lie algebra under Poisson bracket $\{,\}$ and has a quadratic form
$p\mapsto Q(p)$ given by the discriminant of $p$; the latter polarizes to give
an inner product $\ipq{p,\tilde p}$ of signature $(2,1)$. For $u\in W$, we
denote by $u^\flat\in W^*$ the linear form $v\mapsto \kappa(u,v)$.

Our construction proceeds by fixing a quadratic form $q\in S^2W^*$. The
Poisson bracket $\{q,\cdot\}\colon S^2W^*\to S^2W^*$ vanishes on the span of
$q$ and its image is the $2$-dimensional subspace $S^2_{0,q}W^*:=q^\perp$. We
thus obtain a map
\begin{equation*}
\mathrm{ad}_q\colon S^2W^*/\spn{q} \to S^2_{\smash{0,q}}W^*.
\end{equation*}
We now define $\sigma_q\colon W\to S^2W^*/\spn{q}$ via the Veronese map
\begin{equation*}
\sigma_q(\bz) = \bz^\flat\otimes\bz^\flat \mod q
\end{equation*}
and let $R_q=\mathrm{ad}_q^* Q$. Thus
$R_q(\sigma_q(\bz))=Q(\{q,\bz^\flat\otimes\bz^\flat\}) =
\ipq{q,\bz^\flat\otimes\bz^\flat}^2$ (see Appendix~\ref{s:plt}~\eqref{Q-ip}
with $p=q$ and $\tilde p =\bz^\flat\otimes\bz^\flat$, which is null) and so
\begin{equation*}
R_q(\sigma_q(\bz))=q(\bz)^2.
\end{equation*}

A geometrical solution to~\eqref{rationalize} is now given by identifying
$\tor$ with $S^2W^*/\spn{q}$, and $R$ with $R_q$. This can have arbitrary type
(elliptic, parabolic or hyperbolic): $R_q$ is positive definite if $Q(q) < 0$,
signature $(1,1)$ if $Q(q) > 0$, or semi-positive degenerate if $Q(q) = 0$.
This geometrical solution represents $\ximap$ as $\sigma_q(\bx)$ and $\etamap$
as $\sigma_q(\by)$, where
\begin{equation*}
(\bx,\by)\colon
M\to \Proj(W)\times\Proj(W)\setminus\Delta(W).
\end{equation*}
For $Q(q)\neq 0$, $\sigma_q$ defines a branched double cover of $\Proj(\tor)$
by $\Proj(W)$. For $Q(q)=0$, the projective transformation appears to be
singular for $q\in \spn{\bz^\flat\otimes \bz^\flat}$, but this singularity is
removable (by sending such $\bz$ to $\spn{\bz^\flat}\odot W^*\mod q$) and
$\sigma_q$ identifies $\Proj(W)$ with $\Proj(\tor)$ via the pencil of lines
through a point on a conic. The following figure illustrates the two cases:
\begin{center}
\includegraphics[width=0.8\textwidth]{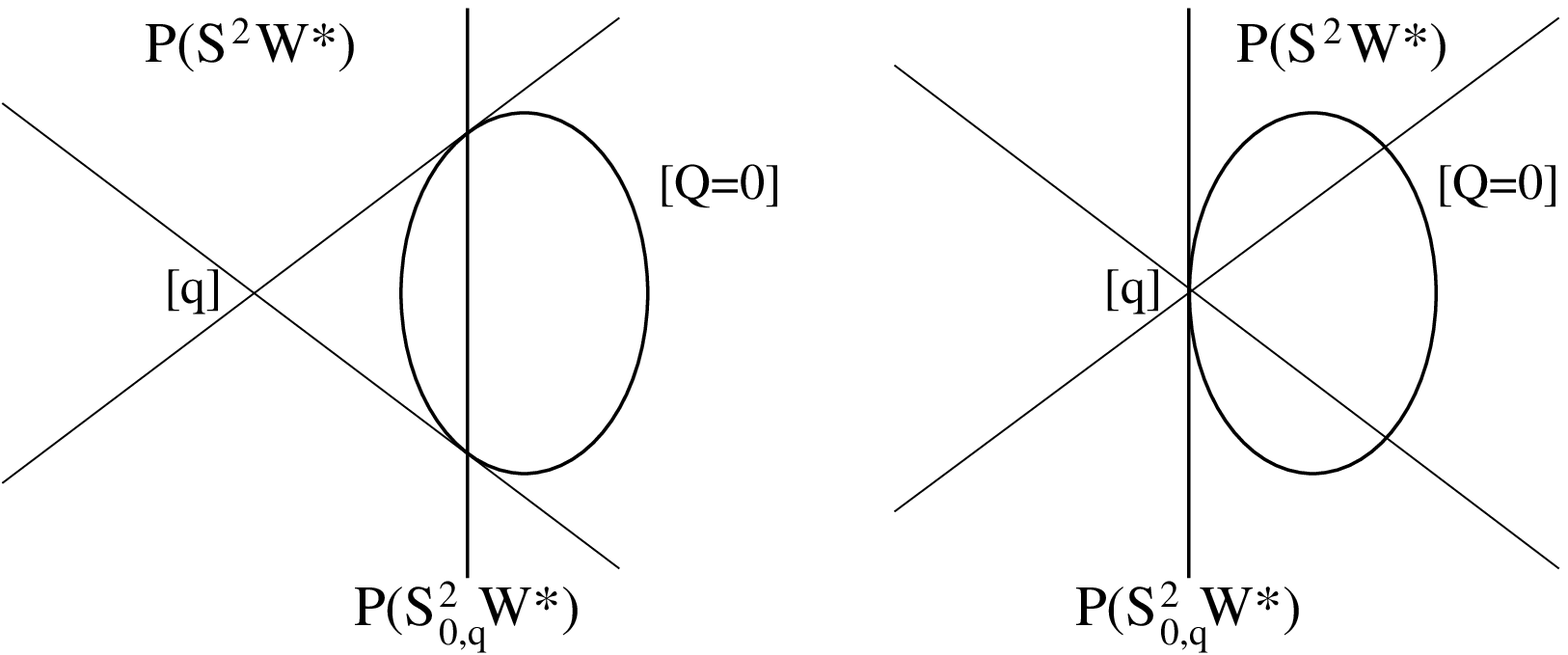}
\end{center}

An area form $\eps\in \Wedge^2\tor^*$ is given by
$\eps(\lambda,\mu)=\ipq{\mathrm{ad}_q\lambda,\mu}$. In particular
\begin{equation*}
\eps(\sigma_q(\bz_1),\sigma_q(\bz_2))
=\ipq{\{q,{\bz_1}^\flat\otimes{\bz_1}^\flat\},
{\bz_2}^\flat\otimes{\bz_2}^\flat}
=2\kappa(\bz_1,\bz_2)q(\bz_1,\bz_2),
\end{equation*}
where $q(\bz_1,\bz_2)$ is the symmetric bilinear form obtained by
polarization. It follows that the barycentric metric $g_0$ may be written
invariantly as
\begin{equation*}
\frac{\d\bx^2}{A(\bx)} + \frac{\d\by^2}{B(\by)}
+ A(\bx) \left(\frac{\ipq{\d\taumap,\by\otimes\by}}
{\kappa(\bx,\by)q(\bx,\by)} \right)^2
+ B(\by) \left(\frac{\ipq{\d\taumap,\bx\otimes\bx}}
{\kappa(\bx,\by)q(\bx,\by)} \right)^2,
\end{equation*}
where $A,B$ are local sections of $\cO(4)$ over $\Proj(W)$,
$\d\taumap=\frac12\{q,\d\ang\}$, and we omit to mention use of the natural
lift $(\cdot)^\natural$ to $\cO(1)\otimes W$ over $\Proj(W)$. Note that
$\ipq{q,\d\taumap}=0$.

A more concrete expression may be obtained by introducing a symplectic basis
$e_1, e_2$ of $W$ (so that $\kappa(e_1,e_2)=1$) and hence an affine coordinate
$z$ on $\Proj(W)$: see Appendix~\ref{s:plt}. In particular, $\kappa(x
e_1+e_2,y e_1+e_2)=x-y$ and any quadratic form $p\in S^2W^*$ may be
written
\begin{equation*}
p(z) = p_0 z^2 + 2 p_1 z + p_2
\end{equation*}
with polarization given by
\begin{equation*}
p(x,y) = p_0 xy + p_1(x+y)+p_2.
\end{equation*}
Elements of $\tor$ may thus represented by triples $[w]=[w_0,w_1,w_2]\in
S^2W^*/\spn{q}$, or by the corresponding elements $p=(p_0,p_1,p_2)$ of
$S^2_{0,q}W^*$ where $p=\frac12\{q,w\}$. The corresponding vector field on $M$
will be denoted $K^{[w]}$ or $K^{(p)}$, so that $\d\ang(K^{[w]})=[w]$ and
$\d\taumap(K^{(p)})=p$. (The factor $1/2$ in the formula
$\d\taumap=\frac12\{q,\d\ang\}$ is a convenience.)

\begin{thm}\label{thm:ambitoric} Let $(M,c,J_+,J_-,\tor)$ be an ambitoric
$4$-manifold with barycentric metric $g_0$ and K\"ahler metrics
$(g_+,\omega_+)$ and $(g_-,\omega_-)$. Then, about any point in a dense open
subset of $M$, there is a neighbourhood in which $(c,J_+,J_-)$ is either of
Calabi type with respect to some $\lambda\in\tor$, or there there are
$\tor$-invariant functions $x,y$, a quadratic polynomial
$q(z)=q_0z^2+2q_1z+q_2$, and functions $A(z)$ and $B(z)$ of one variable with
respect to which\textup:
\begin{gather} \label{g0-xy} \begin{split} 
g_0 & = \frac{\d x^2}{A(x)} + \frac{\d y^2}{B(y)}\\ & \quad
+A(x) \left(\frac{y^2 \d\tau_0 + 2y \d\tau_1 + \d\tau_2}{(x-y)q(x,y)}\right)^2
+B(y) \left(\frac{x^2 \d\tau_0 + 2x \d\tau_1 + \d\tau_2}{(x-y)q(x,y)}\right)^2,
\end{split}\displaybreak[0]\\
\label{omegaplus-xy} \begin{split} 
\omega_+ &= \frac{x-y}{q(x,y)}
\left(\frac{\d x\wedge\d^c_+x}{A(x)} + \frac{\d y\wedge\d^c_+y}{B(y)}\right)\\
&= \frac{\d x\wedge (y^2 \d\tau_0 + 2y \d\tau_1 + \d\tau_2)
+ \d y \wedge (x^2 \d\tau_0 + 2x \d\tau_1 + \d\tau_2)}{q(x,y)^2},
\end{split}\\
\label{omegaminus-xy} \begin{split} 
\omega_- & = \frac{q(x,y)}{x-y}
\left(\frac{\d x\wedge\d^c_-x}{A(x)} + \frac{\d y\wedge\d^c_-y}{B(y)}\right)\\
&= \frac{\d x \wedge (y^2 \d\tau_0 + 2 y \d\tau_1 + \d\tau_2)
-  \d y \wedge (x^2 \d\tau_0 + 2 x \d\tau_1 + \d\tau_2)}{(x-y)^2}.
\end{split}
\end{gather}
where $2q_1\d\tau_1=q_0\d\tau_2+q_2\d\tau_0$ and $q(x,y)=q_0xy+q_1(x+y)+q_2$.

Conversely, for any data as above, the above metric and K\"ahler forms do
define an ambitoric K\"ahler structure on any simply connected open set where
$\omega_\pm$ are nondegenerate and $g_0$ is positive definite.
\end{thm}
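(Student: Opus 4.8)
The plan is to treat the two directions separately. The forward statement assembles Lemmas~\ref{l:regular}--\ref{l:ambitoric} with the Veronese uniformization set up above, and the converse runs that construction backwards and transports the holomorphic--lagrangian model of \S\ref{s:hlag-torus} along the branched cover.

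For the forward direction I would first invoke the corollary to Lemma~\ref{l:regular}: on a dense open subset each component is either of Calabi type (the excluded alternative) or regular in the sense of Definition~\ref{regular}. In the regular case Lemma~\ref{l:ambihermitian-toric} writes the barycentric metric $g_0$ in coordinates $(\xi,\eta,t_1,t_2)$ as \eqref{g-eqn}--\eqref{Jpm} with functions $U,V,F,G$; since the ambik\"ahler hypothesis on $g_0$ is exactly \eqref{mean}--\eqref{ck} for its Lee forms, Lemma~\ref{l:ambitoric} forces $U(\xi)=R(\xi)^{1/2}$, $V(\eta)=R(\eta)^{1/2}$ for a single quadratic $R$ and supplies the conformal factor \eqref{f-gen} with $g_\pm=f^{\mp1}g_0$. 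I would then clear the square root by the rationalization \eqref{rationalize} in its geometric form: identify $\tor$ with $S^2W^*/\spn{q}$ for a quadratic form $q$ with $R=R_q$ (the sign of $Q(q)$ selecting the elliptic, parabolic or hyperbolic type), and put $\ximap=\sigma_q(\bx)$, $\etamap=\sigma_q(\by)$ with $x,y$ the affine coordinate on $\Proj(W)$. In a symplectic basis the invariant form of $g_0$ obtained above becomes \eqref{g0-xy}, where the stated relation $2q_1\d\tau_1=q_0\d\tau_2+q_2\d\tau_0$ is precisely $\ipq{q,\d\taumap}=0$, i.e.\ the fact that $\d\taumap=\tfrac12\{q,\d\ang\}$ takes values in $S^2_{0,q}W^*=q^\perp$. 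Finally $\omega_\pm=f^{\mp1}\omega_\pm^{g_0}$: substituting \eqref{f-gen}, \eqref{omega-eqn} and the change of variables, and using $\eps(\sigma_q(\bx),\sigma_q(\by))=2(x-y)q(x,y)$ to recast the denominators, collapses $\omega_\pm$ into the two equal expressions of \eqref{omegaplus-xy}--\eqref{omegaminus-xy}.

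For the converse I would reverse these moves. Given $q,A,B$, set $R=R_q$, recover $\sigma=\sigma_q$ and $\rho$ with $R(\sigma(z))=\rho(z)^2$, and define $F,G$ through $A(x)=F(\sigma(x))\rho(x)/\sigma'(x)^2$ and its analogue; the resulting metric in $(\xi,\eta,t_1,t_2)$ is of the shape \eqref{g-eqn}--\eqref{Jpm} and satisfies \eqref{mean}--\eqref{ck} by Lemma~\ref{l:ambitoric}. Here integrability of $J_\pm$ is immediate: because $F$ depends only on $\xi$ and $G$ only on $\eta$, the forms $\d^c_\pm t_1$ and $\d^c_\pm t_2$ read off from \eqref{Jpm} are sums of one-variable expressions $h(\xi)\d\xi$ and $k(\eta)\d\eta$, hence closed, so $\d\d^c_\pm\ang=0$ and the criterion of \S\ref{s:hlag-torus} makes $g_0$ ambihermitian. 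The converse statement established just before Lemma~\ref{l:ambitoric} then renders $g_\pm=f^{\mp1}g_0$ K\"ahler, and as $\tor$ acts by commuting $J_\pm$-holomorphic lagrangian Killing fields by construction, the structure is ambitoric. Pulling back along $\xi=\sigma(x)$, $\eta=\sigma(y)$ --- a local diffeomorphism off $\sigma'=0$ --- reproduces \eqref{g0-xy}--\eqref{omegaminus-xy}; the hypotheses that $\omega_\pm$ be nondegenerate and $g_0$ positive definite on a simply connected set ensure the square root in \eqref{f-gen} is single-valued and $g_\pm$ are honest K\"ahler metrics there.

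The main obstacle I anticipate is not integrability or closedness --- which, as above, follow cleanly from the one-variable dependence of $F,G$ and from \eqref{mean}--\eqref{ck} --- but the bookkeeping in the geometric uniformization: verifying that the Veronese substitution carries the branched square-root metric governed by \eqref{f-gen}/\eqref{g0-generic} into exactly \eqref{g0-xy} and simultaneously collapses both $\omega_\pm$ into their two equivalent rational forms, with $\ipq{q,\d\taumap}=0$ as the sole relation needed. This is where the $\sgl(W)$-representation theory of Appendix~\ref{s:plt} and the polarization identities for $q$ and $R$ must interlock exactly, and where careful tracking of the area form $\eps$ and of the denominators $x-y$ and $q(x,y)$ is essential.
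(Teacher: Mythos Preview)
Your forward direction is essentially the paper's own argument: invoke the Calabi-type/regular dichotomy, feed Lemmas~\ref{l:ambihermitian-toric} and~\ref{l:ambitoric} into the Veronese uniformization built from $q$, and read off \eqref{g0-xy}--\eqref{omegaminus-xy} together with the constraint $\ipq{q,\d\taumap}=0$. The paper phrases this as ``substitute into the invariant form of the metric, or carry out the coordinate transformation explicitly using~\eqref{g0-generic}'', and then records \eqref{dc-xy} and \eqref{fxy} as the outputs; your anticipated ``bookkeeping'' obstacle is real but routine, exactly as you say.

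Where you diverge is in the converse. You propose to undo the Veronese substitution---recover $F,G$ from $A,B$ via a local inverse of $\sigma$, land in the $(\xi,\eta)$ model, and appeal to Lemma~\ref{l:ambitoric} and the $\d\d^c_\pm\ang=0$ criterion of \S\ref{s:hlag-torus}. This works, but it drags the branched cover (and the caveat $\sigma'\neq 0$) into a statement that does not mention it. The paper instead stays in the $(x,y)$ coordinates and simply checks that the explicit $2$-forms \eqref{omegaplus-xy}--\eqref{omegaminus-xy} are closed; since $J_\pm$ are already given by \eqref{dc-xy} and the compatibility with $g_\pm=f^{\mp1}g_0$ is manifest, closedness of $\omega_\pm$ is all that is needed to make $(g_\pm,J_\pm,\omega_\pm)$ K\"ahler, and the toric data are built in. Your route is correct but buys nothing extra; the paper's direct verification is shorter and sidesteps the branch locus entirely.
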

\begin{proof} The fact that regular ambitoric conformal structures have this
form follows easily from Lemmas~\ref{l:ambihermitian-toric}
and~\ref{l:ambitoric}. One can either substitute into the invariant form of
the metric, or carry out the coordinate transformation explicitly
using~\eqref{g0-generic}. We deduce from \eqref{Jpm} that
\begin{equation} \label{dc-xy} \begin{split}
\d^c_+ x =\d^c_- x &= \frac{A(x)}
{(x - y) q(x,y)}(y^2 \d\tau_0 + 2 y \d\tau_1 + \d\tau_2), \\
\d^c_+ y =-\d^c_- y&= \frac{B(y)}
{(x - y) q(x,y)}(x^2 \d\tau_0 + 2 x \d\tau_1 + \d\tau_2).
\end{split} \end{equation}
The computation of the conformal factor
\begin{equation} \label{fxy} 
f(x,y) = \frac{q(x,y)}{x - y}
\end{equation}
with $\omega_-=f^2\omega_+$ requires more work, but it is straightforward to
check that $\omega_\pm$ are closed, and hence deduce conversely that any
metric of this form is ambitoric.
\end{proof}

\begin{defn}\label{d:rough} A regular ambitoric structure~\eqref{g0-xy} is
said to be of \emph{elliptic}, \emph{parabolic}, or \emph{hyperbolic} type if
the number of distinct real roots of $q(z)$ (on $\Proj(W)$) is zero, one or
two respectively.
\end{defn}

For later use, we compute the momentum maps $\mu^\pm$ (as functions of $x$ and
$y$) of the (local) toric action with respect to $\omega_{\pm}$. Since
\begin{equation*}
\omega_- = -\d\chi, \qquad
\chi= \frac{xy\, \d\tau_0 + (x+y)\d\tau_1 + \d\tau_2}{x-y},
\end{equation*}
we have, for any $p\in S^2_{0,q}W^*$ (so $2p_1q_1=p_0q_2+p_2q_0$) and any
$c\in\R$, a Killing potential
\begin{equation}\label{mu-minus}
\mu^-_{p,c}= -\frac{p(x,y)+c(x-y)}{x-y} =-\frac{p_0 x y + p_1 (x + y) + p_2+c
(x - y)}{x-y}
\end{equation}
for $K^{(p)}$. (Invariantly, this is the contraction of $p+c\kappa$ with
$-\bx\otimes\by/\kappa(\bx,\by)$.)

For $\omega_+$, we use the fact that $\d\taumap=\frac12\{q,\d\ang\}$ and
compute, for any $w\in S^2W^*$,that
\begin{equation*}
\iota_{K^{[w]}}\omega_+
= \frac{\frac12\{q,w\}(y)\,\d x+\frac12\{q,w\}(x)\,\d y}{q(x,y)},
\end{equation*}
and so
\begin{equation}\label{mu-plus}
\mu^+_{w}=-\frac{w(x,y)}{q(x,y)}
=-\frac{w_0 x y + w_1 (x + y) + w_2}{q_0 x y + q_1 (x + y) + q_2}
\end{equation}
(the contraction of $w$ with $-\bx\otimes \by/q(\bx,\by)$) is a
Killing potential for $K^{[w]}$.

\section{Extremal and conformally Einstein ambitoric surfaces}
\label{s:curvature}

We now compute the Ricci forms and scalar curvatures of a regular ambitoric
K\"ahler surface (cf.~\cite{Abreu0} for the toric case), and hence give a
local classification of extremal ambitoric structures. By considering the Bach
tensor, we also identify the regular ambitoric structures which are
conformally Einstein.

\subsection{Ricci forms and scalar curvatures}

As in~\cite{Cal1,ACG}, we adopt a standard method for computing the Ricci form
of a K\"ahler metric as the curvature of the connection on the canonical
bundle: the log ratio of the symplectic volume to any holomorphic volume is a
Ricci potential. For regular ambitoric metrics, $\d\ang+\iI \d^c_{\pm}\ang$ is
a $J_\pm$-holomorphic $\tor$-valued $1$-form. From~\eqref{dc-xy} we obtain
that for any $w\in S^2W^*$,
\begin{equation*}
\ipq{\d^c_\pm \taumap,w}
= \frac{\{q,w\}(x)}{A(x)} \d x \mp \frac{\{q,w\}(y)}{B(y)} \d y
\end{equation*}
(since $\ipq{q,\d\taumap}=0$), and deduce (using
$\d\taumap=\frac12\{q,\d\ang\}$) that for any $p\in S^2_{0,q}W^*$,
\begin{equation*}
\ipq{\d^c_\pm \ang,p}
= -\frac{p(x)}{A(x)} \d x \pm \frac{p(y)}{B(y)} \d y.
\end{equation*}
Using an arbitrary basis for $S^2_{0,q}W^*$ we find that
\begin{equation*}
v_0 = \frac{(x-y)^2 q(x,y)^2}
{A(x)^2 B(y)^2} \d x \wedge \d^c_+ x \wedge \d y \wedge \d^c_+ y.
\end{equation*}
can be taken as a holomorphic volume for both $J_+$ and $J_-$ (up to
sign). The symplectic volumes $v_\pm$ of $\omega_\pm$ are
\begin{align*}
v_+ &= \frac{(x - y)^2}{q (x, y)^2 A (x) B (y)}
\d x \wedge \d^c_+ x \wedge \d y \wedge \d^c_+ y,\\
v_- &= \frac{q (x, y)^2}{(x - y)^2 A (x) B (y)}
\d x \wedge \d^c_- x \wedge \d y \wedge \d^c_- y.
\end{align*}
Hence the Ricci forms $\rho_\pm= - \frac{1}{2} \d\d^c_\pm \log |v_\pm/v_0|$ of
$\omega_\pm$ are given by
\begin{equation*}
\rho_+ = - \tfrac{1}{2} \d\d^c_+ \log\frac{A(x)B(y)}{q(x,y)^4}, \qquad
\rho_- = - \tfrac{1}{2} \d\d^c_- \log\frac{A(x)B(y)}{(x-y)^4}.
\end{equation*}
The $2$-forms $\d\d^c x$ and $\d\d^c y$ are obtained by
differentiating the two sides of \eqref{dc-xy}. After some work,
we obtain
\begin{gather*} \begin{split}
\d\d^c_\pm x&= \left(A'(x) - \frac{q(x)-q_0 \, (x-y)^2}
{(x-y) q(x,y)}A(x)\right)\frac{\d x \wedge \d^c_\pm x}{A(x)} \\
&\quad\pm \frac{q (y)A(x)}{(x-y)q(x,y)} \frac{\d y\wedge \d^c_\pm y}{B(y)},
\end{split}\displaybreak[0]\\ 
\begin{split} 
\d\d^c_\pm y
&= \mp\frac{q(x)B(y)}{(x-y)q(x,y)}\frac{\d x\wedge \d^c_\pm x}{A(x)}\\
&\quad+ \left(B'(y) + \frac{q(y)-q_0 \, (x-y)^2}
{(x - y) q (x,y)}B(y)\right) \frac{\d y \wedge \d^c_\pm y}{B(y)}.
\end{split}\end{gather*}
Hence for any $\tor$-invariant function $\phi=\phi(x,y)$,
\begin{align*}
\d\d^c_\pm\phi
&=\phi_{xx}\,\d x\wedge\d^c_\pm x + \phi_{yy}\,\d y\wedge\d^c_\pm y
+ \phi_{xy}(\d x\wedge \d^c_\pm y+\d y\wedge \d^c_\pm x)\\
&\quad+ \phi_x\, \d\d^c_\pm x + \phi_y\, \d\d^c_\pm y\displaybreak[0]\\
&= \left(\bigl(A(x)\phi_x\bigr)_x - \frac{q(x)-q_0 \, (x-y)^2}
{(x-y) q(x,y)}A(x)\phi_x\right)\frac{\d x \wedge \d^c_\pm x}{A(x)} \\
&\quad\pm \frac{q(y)A(x)\phi_x}{(x-y)q(x,y)} \frac{\d y\wedge \d^c_\pm y}{B(y)}
\mp \frac{q(x)B(y)\phi_y}{(x-y)q(x,y)}\frac{\d x\wedge \d^c_\pm x}{A(x)}\\
&\quad+ \left(\bigl(B(y)\phi_y\bigr)_y + \frac{q(y)-q_0 \, (x-y)^2}
{(x-y) q(x,y)}B(y)\phi_y\right) \frac{\d y \wedge \d^c_\pm y}{B(y)}\\
&\quad+\phi_{xy}(\d x\wedge \d^c_\pm y+\d y\wedge \d^c_\pm x).
\end{align*}
In particular, the expression is both $J_+$ and $J_-$ invariant iff
$\phi_{xy}=0$.  The invariant part simplifies considerably when expressed in
terms of the K\"ahler forms $\omega_{\pm}^0$ of the barycentric metric. Using
the fact that $q_0 x + q_1$ and $q_0 y+q_1$ are the $y$ and $x$ derivatives of
$q(x,y)$ respectively, we eventually obtain
\begin{align*}
\d\d^c_\pm\phi
&=\frac{q(x,y)^2}{2}\left(\biggl[\frac{A(x)\phi_x}{q(x,y)^2}\biggr]_x
\pm\biggl[\frac{B(y)\phi_y}{q(x,y)^2}\biggr]_y\right)\omega_+^0\\
&\quad+\frac{(x-y)^2}{2}\left(\biggl[\frac{A(x)\phi_x}{(x-y)^2}\biggr]_x
\mp\biggl[\frac{B(y)\phi_y}{(x-y)^2}\biggr]_y\right)\omega_-^0\\
&\quad+\phi_{xy}(\d x\wedge \d^c_\pm y+\d y\wedge \d^c_\pm x).
\end{align*}
Substituting the Ricci potentials for $\phi$, we thus obtain, after a little
manipulation,
\begin{gather*}\begin{split}
\rho_+ &=-\frac{q(x,y)^2}{4}\left(
\biggl[q(x,y)^2\Bigl[\frac{A(x)}{q(x,y)^4}\Bigr]_x\biggr]_x
+\biggl[q(x,y)^2\Bigl[\frac{B(y)}{q(x,y)^4}\Bigr]_y\biggr]_y
\right)\omega_+^0\\
&\quad-\frac{(x-y)^2}{4}\left(
\biggl[\frac{q(x,y)^4}{(x-y)^2}\Bigl[\frac{A(x)}{q(x,y)^4}\Bigr]_x\biggr]_x
-\biggl[\frac{q(x,y)^4}{(x-y)^2}\Bigl[\frac{B(y)}{q(x,y)^4}\Bigr]_y\biggr]_y
\right)\omega_-^0\\
&\quad+2\frac{(q_0q_2-q_1^2)(\d x\wedge\d^c_+ y+\d y\wedge\d^c_+ x)}{q(x,y)^2}
\end{split},\displaybreak[0]\\
\begin{split}
\rho_- &=-\frac{q(x,y)^2}{4}\left(
\biggl[\frac{(x-y)^4}{q(x,y)^2}\Bigl[\frac{A(x)}{(x-y)^4}\Bigr]_x\biggr]_x
-\biggl[\frac{(x-y)^4}{q(x,y)^2}\Bigl[\frac{B(y)}{(x-y)^4}\Bigr]_y\biggr]_y
\right)\omega_+^0\\
&\quad-\frac{(x-y)^2}{4}\left(
\biggl[(x-y)^2\Bigl[\frac{A(x)}{(x-y)^4}\Bigr]_x\biggr]_x
+\biggl[(x-y)^2\Bigl[\frac{B(y)}{(x-y)^4}\Bigr]_y\biggr]_y
\right)\omega_-^0\\
&\quad+2\frac{\d x\wedge \d^c_- y+\d y\wedge \d^c_- x}{(x-y)^2}.
\end{split}
\end{gather*}
(In particular $g_+$ can only be K\"ahler--Einstein in the parabolic
case---when $q$ has a repeated root---while $g_-$ is never
K\"ahler--Einstein.) The scalar curvatures, given by
$s_\pm=2\rho_\pm\wedge\omega_\pm/v_\pm$, should be SL$(W)$-invariants of $A,B$
and $q$. For this we observe that for any quadratic form $p$ with $Q(p)=0$,
and any function $A$ of one variable,
\begin{equation*}
p(x)^2\left( \biggl[p(x)\Bigl[\frac{A(x)}{p(x)^2}\Bigr]_x\biggr]_x\right)
=p(x)A''(x)-3 p'(x) A'(x)+6 p''(x) A(x),
\end{equation*}
which is the transvectant $(p,A)^{(2)}$ when $A$ is a quartic (or more
generally, a local section of $\cO(4)$)---see Appendix~\ref{s:plt}. We apply
this with $p(x)=q(x,y)^2$ and $p(x)=(x-y)^2$, and treat $B(y)$ in a similar
way to obtain,
\begin{align}\label{splus}
s_+ &= -\frac{(q(x,y)^2,A(x))^{(2)}+(q(x,y)^2,B(y))^{(2)}}{(x-y) q(x,y)}\\
\label{sminus}
s_- &=-\frac{((x-y)^2,A(x))^{(2)}+((x-y)^2,B(y))^{(2)}}{(x - y)q (x, y)},
\end{align}
where $y$ is fixed when taking a transvectant with respect to $x$ and vice
versa.

\subsection{Extremality and Bach-flatness}\label{s:ext-bflat}

The K\"ahler metrics $g_\pm$ are extremal if their scalar curvatures $s_\pm$
are Killing potentials. Since the latter are $\tor$-invariant (and $\tor_M$ is
lagrangian), this can only happen if $s_\pm$ is the momentum of some Killing
vector field $K^{(p)}\in \tor$. The condition is straightforward to solve for
$g_+$: equating~\eqref{splus} (for $s_+$) and~\eqref{mu-plus} (for
$\mu^+$) yields
\begin{equation}\label{extremalplus}
(q(x,y)^2, A(x))^{(2)} +(q(x,y)^2, B(y))^{(2)} = (x-y) w(x,y).
\end{equation}
Differentiating three times with respect to $x$ or three times with respect to
$y$ shows that $A$ and $B$ (respectively) are polynomials of degree at most
four. We now introduce polynomials $\Pi$ and $P$ determined by $A=\Pi+P$ and
$B=\Pi-P$. Since the left hand side of~\eqref{extremalplus} is antisymmetric
in $(x,y)$, the symmetric part of the equation is
\begin{equation}\label{plus-sym}
(q(x,y)^2, \Pi(x))^{(2)} +(q(x,y)^2, \Pi(y))^{(2)} =0
\end{equation}
On restriction to the diagonal ($x=y$) in this polynomial equation, we obtain
\begin{equation*}
q^2 \Pi'' - 3 q q' \Pi' + 3 (q')^2 \Pi = 0. 
\end{equation*}
To solve this linear ODE for $\Pi$, we set $\Pi(z)=q(z)\pi(z)$ to get $q^2
(q'' \pi - q' \pi' + q \pi'') = 0$, from which we deduce that $\pi$ is a
polynomial of degree $\leq 2$ ($\pi'''=0$) and that $\pi$ is orthogonal to
$q$. Conversely, by straightforward verification, this ensures $\Pi$
solves~\eqref{plus-sym}.

The antisymmetric part of~\eqref{extremalplus} is
\begin{equation*}
(q(x,y)^2, P(x))^{(2)} - (q(x,y)^2, P(y))^{(2)}
= (x - y) (w_0 x y + w_1 (x + y) + w_2).
\end{equation*}
The left hand side is clearly divisible by $x-y$ and since it is quadratic in
both $x$ and $y$, the quotient is (affine) linear in both $x$ and $y$, hence
the polarization of a quadratic form. To compute this quadratic form we divide
the left hand side by $x-y$ and restrict to the diagonal to obtain
\begin{equation*}
q^2 P''' - 3 q q' P'' + 3\bigl((q')^2 + q q''\bigr) P' - 6 q' q'' P\\
= \{q,(q,P)^{(2)}\}
\end{equation*}
As $q$ is nonzero, any quadratic form may be represented as $(q,P)^{(2)}$ for
some quartic $P$, and hence any quadratic form $w$ orthogonal to $q$ has the
form $w=\{q,(q,P)^{(2)}\}$ for some quartic $P$. Thus
\begin{equation*}
s_+=-\frac{w(x,y)}{q(x,y)},
\end{equation*}
where $w=\{q,(q,P)^{(2)}\}$ is orthogonal to $q$. Hence, except in the
parabolic case ($q$ degenerate), $s_+$ is constant iff it is identically zero.
\smallbreak

Remarkably, the extremality condition for $g_-$ coincides with that for
$g_+$. To see this, we equate~\eqref{sminus} (for $s_-$) and~\eqref{mu-minus}
(for $\mu^-$) to obtain the extremality equation
\begin{equation}\label{extremalminus}
((x-y)^2, A(x))^{(2)} + ((x-y)^2, B(y))^{(2)}
= q(x,y)(p_0 x y + p_1 (x + y) + p_2 + c (x-y)),
\end{equation}
which we shall again decompose into symmetric and antisymmetric parts: for
this we first observe, by taking three derivatives, that $A$ and $B$ are
polynomials of degree $\leq 4$, we write $A=\Pi+P$, $B=\Pi-P$ as before.

The symmetric part, namely
\begin{equation*}
((x-y)^2,\Pi(x))^{(2)} + ((x-y)^2,\Pi(y))^{(2)}
= q(x,y) (p_0 xy + p_1 (x + y) + p_2),
\end{equation*}
immediately yields, on restricting to the diagonal ($y=x$),
$\Pi(z)=q(z)\pi(z)$ with $\pi(z)=p(z)/24$. Further, since $\ip{p,q}=0$, the
equation is satisfied with this Ansatz.

The antisymmetric part, namely
\begin{equation*}
((x-y)^2,P(x))^{(2)} - ((x-y)^2,P(y))^{(2)}
=c q(x,y)(x-y)
\end{equation*}
yields $c=0$ (divide by $x-y$ and restrict to the diagonal) and is then
satisfied identically for \emph{any} polynomial $P$ of degree $\leq 4$. Thus we
again have an extremal K\"ahler metric with
\begin{equation*}
s_- = -\frac{24 \pi(x,y)}{x-y}.
\end{equation*}
Note that $s_-$ is constant iff it is identically zero.

The Bach-flatness condition is readily found using Lemma~\ref{l:bach-flat}:
since $-v_-/v_+=q(x,y)^4/(x-y)^4$, equation~\eqref{bflat} holds iff $\pi(x,y)$
and $w(x,y)$ are linearly dependent.

\begin{thm}\label{thm:main} Let $(J_+,J_-, g_+,g_-,\tor)$ be a regular
ambitoric structure as in Theorem~\textup{\ref{thm:ambitoric}}. Then
$(g_+,J_+)$ is an extremal K\"ahler metric if and only if $(g_-,J_-)$ is an
extremal K\"ahler metric if and only if
\begin{equation}\begin{split}
A(z)&=q(z)\pi(z)+P(z),\\
B(z)&=q(z)\pi(z)-P(z),\\
\end{split}\end{equation}
where $\pi(z)$ is a polynomial of degree at most two orthogonal to $q(z)$ and
$P(z)$ is polynomial of degree at most four. The conformal structure is
Bach-flat if and only if the quadratic polynomials $\pi$ and
$\{q,(q,P)^{(2)}\}$ are linearly dependent.
\end{thm}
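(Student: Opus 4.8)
The plan is to read the theorem off from the scalar-curvature formulas \eqref{splus}--\eqref{sminus}, the momentum maps \eqref{mu-plus}--\eqref{mu-minus}, and the Bach-flatness criterion of Lemma~\ref{l:bach-flat}. A K\"ahler metric is extremal exactly when its scalar curvature is a Killing potential; since $s_\pm$ are $\tor$-invariant and $\tor_M$ is lagrangian, extremality of $g_+$ (resp.\ $g_-$) means precisely that $s_+$ (resp.\ $s_-$) is the momentum of some element of $\tor$. Equating \eqref{splus} with the general momentum \eqref{mu-plus} gives the functional equation \eqref{extremalplus}, and equating \eqref{sminus} with \eqref{mu-minus} gives \eqref{extremalminus}; the whole proof then amounts to solving these two equations for $A$ and $B$ and observing that the two solution sets coincide.

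First I would solve \eqref{extremalplus}. Differentiating three times in $x$ (resp.\ $y$) forces $A$ (resp.\ $B$) to be of degree at most four, after which I write $A=\Pi+P$, $B=\Pi-P$ and split \eqref{extremalplus} into the parts symmetric and antisymmetric under $x\leftrightarrow y$. The symmetric part \eqref{plus-sym}, restricted to the diagonal, is a linear second-order ODE for $\Pi$; the substitution $\Pi=q\pi$ reduces it to $q\pi''-q'\pi'+q''\pi=0$, whose polynomial solutions are exactly the quadratics $\pi$ orthogonal to $q$, and conversely any such $\Pi=q\pi$ solves \eqref{plus-sym}. The antisymmetric part is solved by $w=\{q,(q,P)^{(2)}\}$ for an arbitrary quartic $P$, yielding $s_+=-w(x,y)/q(x,y)$. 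Hence $g_+$ is extremal if and only if $A=q\pi+P$ and $B=q\pi-P$ with $\pi,P$ as stated.

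Next I would run the identical scheme on \eqref{extremalminus}. Three derivatives again force degree at most four, and with $A=\Pi+P$, $B=\Pi-P$ the symmetric part yields $\Pi=q\pi$ with $\pi=p/24$, the constraint $\ip{p,q}=0$ built into \eqref{mu-minus} being exactly the orthogonality of $\pi$ to $q$; the antisymmetric part forces $c=0$ and is then satisfied for every quartic $P$, giving $s_-=-24\,\pi(x,y)/(x-y)$. This is the very same condition on $(A,B)$ obtained for $g_+$, so the first chain of equivalences follows: $g_+$ extremal $\Leftrightarrow$ the stated form of $A,B$ $\Leftrightarrow$ $g_-$ extremal. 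In each direction the converse (that the stated $A,B$ really do make the metrics extremal) is the direct substitution back into \eqref{extremalplus} and \eqref{extremalminus} already implicit above.

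For Bach-flatness I would feed the extremal data into Lemma~\ref{l:bach-flat}. With $s_+=-w/q$, $s_-=-24\pi/(x-y)$ and $-v_-/v_+=q(x,y)^4/(x-y)^4$, so that $(-v_-/v_+)^{1/4}=q/(x-y)$, the relation \eqref{bflat} becomes $24\,C_+\,\pi(x,y)=C_-\,w(x,y)$ once the common factors $q$ and $(x-y)$ cancel; thus Bach-flatness is equivalent to linear dependence of $\pi$ and $w=\{q,(q,P)^{(2)}\}$, as claimed. The main obstacle is not this generic computation, which is short, but the hypotheses of Lemma~\ref{l:bach-flat}: it excludes the conformally-flat case and requires $g_+,g_-$ to be non-homothetic. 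These excluded configurations are degeneracies---for instance $s_+$ or $s_-$ vanishing identically, or $w$ already proportional to $\pi$---and must be checked by hand, where the linear-dependence criterion either holds trivially or the Bach tensor vanishes for independent reasons.
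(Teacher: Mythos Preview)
Your proposal is correct and follows essentially the same route as the paper: you equate the scalar curvatures with the momentum maps to obtain \eqref{extremalplus} and \eqref{extremalminus}, force $A$ and $B$ to be quartics by differentiation, split into symmetric and antisymmetric parts under $x\leftrightarrow y$ via $A=\Pi+P$, $B=\Pi-P$, solve the diagonal ODE by the substitution $\Pi=q\pi$, and then read off Bach-flatness from Lemma~\ref{l:bach-flat} and the volume ratio. Your closing caveat about the excluded hypotheses of Lemma~\ref{l:bach-flat} (conformally-flat or homothetic cases) is a point of care that the paper itself glosses over.
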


\subsection{Compatible metrics with diagonal Ricci tensor}
\label{s:diagonal-ricci}

A consequence of the explicit form~\eqref{mu-minus}--\eqref{mu-plus} for the
Killing potentials is that any regular ambitoric structure admits
$\tor$-invariant compatible metrics with diagonal Ricci tensor.

\begin{prop}\label{p:diagonal-ricci} Let $(g_\pm,J_\pm,\omega_\pm,\tor)$ be a
regular ambitoric structure as in Theorem~\textup{\ref{thm:ambitoric}}. Then
for any quadratic $p(z)=p_0 z^2+2p_1 z+p_2$ orthogonal to $q$,
\begin{equation*}
g = \frac{(x-y)^2}{p(x,y)^2} g_- = \frac{q(x,y)^2}{p(x,y)^2} g_+
= \frac{q(x,y)(x-y)}{p(x,y)^2}g_0
\end{equation*}
has diagonal Ricci tensor and scalar curvature
\begin{align*}
s^g &= -\frac{(p(x,y)^2,A(x))^{(2)}+(p(x,y)^2,B(y))^{(2)}}{(x-y) q(x,y)}
\end{align*}
Any $\tor$-invariant compatible metric with diagonal Ricci tensor arises in
this way.
\end{prop}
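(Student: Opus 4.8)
The plan is to work throughout with the two conformal factors $\varphi_+=p(x,y)/q(x,y)$ and $\varphi_-=p(x,y)/(x-y)$, for which $g=\varphi_\pm^{-2}g_\pm$. That the three displayed expressions for $g$ coincide is immediate from $g_-=f^2g_+$ and $g_0=fg_+$ with $f=q(x,y)/(x-y)$ as in~\eqref{fxy}; note also that $\varphi_-/\varphi_+=f$, consistent with $\varphi_+^{-2}g_+=\varphi_-^{-2}g_-$.

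For the diagonality of $\rc{g}$ I would invoke the identity $\rc{g}_0=\rc{g_\pm}_0+2\varphi_\pm^{-1}(D^{g_\pm}\d\varphi_\pm)_0$ underlying Proposition~\ref{p:J-inv-ric}. Since $g_\pm$ are K\"ahler, $\rc{g_\pm}_0$ is $J_\pm$-invariant, so $\rc{g}_0$ is $J_\pm$-invariant exactly when $D^{g_\pm}\d\varphi_\pm$ is; and a function is a Killing potential for a K\"ahler metric precisely when the anti-invariant part of its Hessian vanishes (cf.\ $B^{\hat g,-}=-\tfrac16 D^-\d s$ in Section~\ref{s:bach}). Comparing with the momentum maps of Theorem~\ref{thm:ambitoric}, $\varphi_+=-\mu^+_p$ by~\eqref{mu-plus} (with $w=p$) and $\varphi_-=-\mu^-_{p,0}$ by~\eqref{mu-minus} (with $c=0$), where $p\perp q$ guarantees $K^{(p)}\in\tor$. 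Hence $\varphi_\pm$ are Killing potentials for $\omega_\pm$ respectively, and $\rc{g}_0$ is both $J_+$- and $J_-$-invariant.

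The scalar curvature is the one genuinely computational step. I would apply the four-dimensional conformal change law $s^g=\varphi_+^2 s_+ + 6\varphi_+\trace_{g_+}(D^{g_+}\d\varphi_+) - 12|\d\varphi_+|^2_{g_+}$, insert $s_+$ from~\eqref{splus} and $\varphi_+=p(x,y)/q(x,y)$, and evaluate the Hessian trace and gradient norm from the explicit metric~\eqref{g0-xy} (via $g_+=f^{-1}g_0$). After simplification---with the transvectant identity used just before~\eqref{splus} as the main lever---the result should reassemble as $-\bigl((p(x,y)^2,A(x))^{(2)}+(p(x,y)^2,B(y))^{(2)}\bigr)/\bigl((x-y)q(x,y)\bigr)$. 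A useful organizing check is that this formula specializes correctly: $p=q$ gives $g=g_+$ and recovers~\eqref{splus}, while $p(x,y)=x-y$ gives $g=g_-$ and recovers~\eqref{sminus}.

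For the converse, write any $\tor$-invariant compatible metric as $g=\psi^{-2}g_+$ with $\psi=\psi(x,y)>0$. Diagonality forces $\rc{g}$ to be $J_+$-invariant, so by the argument above $\psi$ is a $\tor$-invariant Killing potential for $(g_+,J_+)$. For such $\psi$ the gradient $\grad_{g_+}\psi$ lies in $\tor_M^\perp=J_+\tor_M$ (as $g_+$ is block diagonal in $(x,y,t_1,t_2)$), so $K:=\grad_{\omega_+}\psi=J_+\grad_{g_+}\psi$ is a section of $\tor_M$; writing $K=aK_1+bK_2$ and imposing $\cL_K g_+=\d a\odot K_1^\flat+\d b\odot K_2^\flat=0$ forces $a,b$ constant, whence $K\in\tor$ and $\psi=w(x,y)/q(x,y)$ for some $w\in S^2W^*$ by~\eqref{mu-plus}. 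Rewriting $g=(w(x,y)/(x-y))^{-2}g_-$ and imposing $J_-$-invariance identifies $w(x,y)/(x-y)$ with an invariant $\omega_-$-Killing potential, necessarily of the form $(p(x,y)+c(x-y))/(x-y)$ with $p\perp q$ by~\eqref{mu-minus}; matching the symmetric and antisymmetric parts in $(x,y)$ gives $c=0$ and $w=p\perp q$, so $g$ has the asserted form. The main obstacle is the scalar-curvature identity of the third paragraph; the diagonality and the converse are essentially bookkeeping with the momentum maps~\eqref{mu-minus}--\eqref{mu-plus}.
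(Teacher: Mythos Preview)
Your proposal is correct and follows essentially the same approach as the paper: both reduce diagonality of $\rc{g}$ via Proposition~\ref{p:J-inv-ric} to the condition that $\varphi_\pm$ are Killing potentials for $\omega_\pm$, identify these with the momentum maps~\eqref{mu-plus}--\eqref{mu-minus}, and match $\varphi_-=f\varphi_+$ to force $w=p\perp q$ and $c=0$. Your converse is slightly more explicit than the paper's (which simply asserts that for $\tor$-invariant $g$ the Killing fields lie in $\tor$), and for the scalar curvature the paper omits the computation entirely, so your sketch via the conformal change formula is already more than is offered there; note, however, that your two ``sanity checks'' ($p=q$ and $p(x,y)=x-y$) are only formal, since neither is an admissible $p\in S^2_{0,q}W^*$ in general.
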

\begin{proof} By Proposition~\ref{p:J-inv-ric}, a compatible metric
$g=\varphi_+^{-2}g_+= \varphi_-^{-2}g_-$ has diagonal Ricci tensor iff
$\varphi_\pm$ are Killing potentials with respect to $\omega_\pm$. For $g$ to
be $\tor$-invariant, the corresponding Killing fields must be in $\tor$, hence
$\varphi_+=w(x,y)/q(x,y)$ for some $w\in S^2W^*$ and
$\varphi_-=p(x,y)/(x-y)+c$ for some $p\in S^2_{0,q}W^*$. The equality
$\varphi_+^{-2}g_+= \varphi_-^{-2}g_-$ is satisfied iff $w=p$ and $c=0$. The
formula for the scalar curvature is a tedious computation which we omit.
\end{proof}
We have seen in Theorem~\ref{thm:AT-rough} that the riemannian analogues of
Pleba\'nski--Demia\'nski metrics are compatible CSC metrics compatible with
diagonal Ricci tensor. Since the scalar curvature of $g$ has the same form as
the scalar curvature of $g_+$ (with $q$ replaced by $p$), the calculations
used for the extremality of $g_+$ establish the following result.

\begin{thm} \label{thm:einstein-maxwell} A compatible metric $g$ with
diagonal Ricci tensor is CSC if and only if
\begin{equation}\begin{split}
A(z)&=p(z)\rho(z)+R(z),\\ B(z)&=p(z)\rho(z)-R(z),\\
\end{split}\end{equation}
where $\rho(z)$ is a quadratic polynomial orthogonal to $p(z)$ and $R(z)$ is a
quartic polynomial orthogonal to $q(z)p(z)$ \textup(equivalently $(q,R)^{(2)}$
is orthogonal to $p$ or, equally, $(p, R)^{(2)}$ is orthogonal to
$q$\textup). The metric is Einstein when $\rho(z)$ is a multiple of $q(z)$.
\end{thm}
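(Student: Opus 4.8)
The plan is to leverage the remark preceding the statement: the scalar curvature $s^g$ of the diagonal-Ricci metric $g$ of Proposition~\ref{p:diagonal-ricci} has the same shape as $s_+$ from~\eqref{splus}, with $q$ replaced by $p$ in the transvectants but with the denominator $(x-y)q(x,y)$ unchanged. First I would write the CSC condition $s^g=c$ as the polynomial identity
\[
(p(x,y)^2,A(x))^{(2)}+(p(x,y)^2,B(y))^{(2)}=-c\,(x-y)\,q(x,y),
\]
and, taking three derivatives in $x$ and in $y$, deduce that $A$ and $B$ are quartics. Setting $A=\Pi+R$ and $B=\Pi-R$, I would split into the parts symmetric and antisymmetric under $x\leftrightarrow y$; the right-hand side is antisymmetric, so the whole analysis of \S\ref{s:ext-bflat} applies \emph{mutatis mutandis} with $q$ replaced by $p$.

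The symmetric part $(p^2,\Pi(x))^{(2)}+(p^2,\Pi(y))^{(2)}=0$, restricted to the diagonal with the Ansatz $\Pi=p\rho$, becomes $p^2(p''\rho-p'\rho'+p\rho'')=0$, forcing $\rho$ to have degree $\le 2$ and to be orthogonal to $p$, exactly as $\Pi=q\pi$ arose for $g_+$. The genuinely new input is in the antisymmetric part: dividing by $(x-y)$ and restricting to the diagonal yields
\[
\{p,(p,R)^{(2)}\}=-c\,q,
\]
the analogue of $w=\{q,(q,P)^{(2)}\}$, but now with the \emph{fixed} quadratic $q$ on the right in place of a free Killing potential. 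Since $\{p,\cdot\}$ takes values in $p^\perp$ and $q\in p^\perp$ (as $p\perp q$), solvability for some constant $c$ is equivalent to $\{p,(p,R)^{(2)}\}$ being a multiple of $q$; invoking the $\sgl(W)$ triple-product identity (with $Q(p)\neq 0$) this holds iff $(p,R)^{(2)}\perp q$, and transvectant adjointness $\ipq{(p,R)^{(2)},q}\propto\ipq{R,p\,q}\propto\ipq{(q,R)^{(2)},p}$ (Appendix~\ref{s:plt}) furnishes the three equivalent formulations in the statement. Establishing this chain of identities, and separately treating the degenerate case $Q(p)=0$ where the triple-product step fails, is the main obstacle for the CSC part.

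For the Einstein clause I would argue both ways. If $g$ is Einstein then it is Bach-flat (\S\ref{s:bach}), so by Proposition~\ref{p:bach} both $g_\pm$ are extremal; comparing the extremal description $A=q\pi+P$, $B=q\pi-P$ of Theorem~\ref{thm:main} with the present $A=p\rho+R$, $B=p\rho-R$ gives $P=R=(A-B)/2$ and $q\pi=p\rho=(A+B)/2$, whence, since $p$ and $q$ are coprime, $(A+B)/2$ is a constant multiple of $pq$ and $\rho$ is a multiple of $q$. Conversely, if $\rho=\mu q$ then $A=\mu pq+R$, $B=\mu pq-R$ is already in the extremal form with $\pi=\mu p$ (orthogonal to $q$ because $p\perp q$) and $P=R$, so $g_\pm$ are extremal; applying the triple-product identity now with $q$ in the bracket shows that the CSC constraint $R\perp qp$ is precisely the linear dependence of $\pi=\mu p$ and $\{q,(q,R)^{(2)}\}$, i.e.\ the Bach-flat condition of Theorem~\ref{thm:main}. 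Hence the conformal class is Bach-flat, and because $\{q,(q,R)^{(2)}\}\parallel p$ we have $s_+\propto p/q=\varphi_+$, so the given $g=\varphi_+^{-2}g_+$ is the compatible Einstein metric of Lemma~\ref{l:bach-flat}. I expect the residual difficulty to be bookkeeping in the forward direction when $p$ and $q$ share a root (elliptic/parabolic degenerations), which I would settle using the constraints $\pi\perp q$, $\rho\perp p$ or by a continuity argument, as for the analogous degeneracies in the extremal case.
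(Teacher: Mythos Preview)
Your proposal is correct and follows exactly the route the paper indicates: the text preceding the theorem says only that ``the calculations used for the extremality of $g_+$ establish the following result,'' and you carry this out faithfully for the CSC part---splitting into symmetric and antisymmetric parts in $(x,y)$, with the new constraint arising because the right-hand side is a \emph{fixed} multiple of $q$ rather than a free Killing potential. Your derivation that $\{p,(p,R)^{(2)}\}\in\spn{q}$ is equivalent to $(p,R)^{(2)}\perp q$ (hence to $R\perp pq$) is the one genuinely new step, and it is sound; in fact your caveat ``with $Q(p)\neq 0$'' is unnecessary, since the cross-product argument goes through even when $p$ is null (using only that $Q$ is nondegenerate and $p,q$ are independent). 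Note, however, that the transvectant adjointness $\ipq{(p,R)^{(2)},q}\propto\ipq{R,pq}$ you invoke is not stated in Appendix~\ref{s:plt}; it is standard but you should justify it separately.

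For the Einstein clause the paper gives no argument at all, so your route via Theorem~\ref{thm:main} and Bach-flatness is a genuine addition rather than a paraphrase. It is correct in the generic case and more conceptual than a direct Ricci computation. Two residual points deserve care: (i) in the forward direction, $q\pi=p\rho$ with $p,q$ sharing a root does not immediately give $\rho\in\spn{q}$ by coprimality, but the orthogonality constraints $\pi\perp q$, $\rho\perp p$ resolve this (or one may pass to the limit); (ii) in the converse, when $s_+$ and $s_-$ both vanish identically (i.e., $\mu=0$ and $(q,R)^{(2)}\in\spn{q}$), the conformal class is flat and your Bach-flat argument does not single out $g$ as Einstein---this degenerate case needs a direct check or can be excluded as trivial.
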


This is strikingly similar to, yet also different from, the extremal case.
They overlap in the Einstein case, and in the parabolic case with $p$ a
multiple of $q$.

\subsection{Normal forms}\label{summary}

The projective choice of coordinate on $\Proj(W)$ can be used to set $q(z)=1$,
$z$ or $1+z^2$ in the parabolic, hyperbolic or elliptic cases respectively.
To describe the curvature conditions in these normal forms, we write $A(z) =
a_0 z^4 + a_1 z^3 + a_2 z^2 + a_3 z + a_4$ and $B(z) = b_0 z^4 + b_1 z^3 + b_2
z^2 + b_3 z + b_4$.

\subsubsection*{Parabolic type} When $q(z)=1$, $\d\tau_0=0$, and $S^2_{0,q}W^*=
\{p(z)=2p_1z+p_2\}$; we may represent $[w]\in S^2W^*/\spn{q}$ by
$w_0z^2+2w_1z$ with $\frac12\{q,w\}=-w_0 z-w_1$ and define components of
$\xi\in \tor^*$ by $\xi(p)=2\xi_1 p_1+\xi_2 p_2$. Modulo constants, the
Killing potentials for $\omega_\pm$ are spanned by
\begin{align*}
\mu^+_1 &= x + y, & \mu^+_2 &= x y, \\
\mu^-_1 &= - \frac{1}{x - y}, &\mu^-_2 &= - \frac{x + y}{2 (x - y)},
\end{align*}
while the barycentric metric $g_0$ and K\"ahler forms $\omega_\pm$ take
the form
\begin{align*}
g_0 &= \frac{\d x^2}{A(x)}   + \frac{\d y^2}{B(y)}
+ \frac{A(x)(\d t_1 + y\, \d t_2)^2}{(x-y)^2}
+ \frac{B(y)(\d t_1 + x\, \d t_2)^2}{(x-y)^2},\\
\omega_+ &= \d x \wedge (\d t_1 + y \, \d t_2)
+ \d y \wedge (\d t_1 + x \, \d t_2),\\
\omega_- &= \frac{\d x \wedge (\d t_1 + y \, \d t_2)}{(x-y)^2} - 
\frac{\d y \wedge (\d t_1 + x \, \d t_2)}{(x - y)^2}.
\end{align*}
The metrics $g_\pm$ are extremal iff
\begin{equation*}
a_0 + b_0 = a_1 + b_1 = a_2 + b_2 = 0,
\end{equation*}
in which case
\begin{equation*}
s_+ =  - 6 a_1 - 12 a_0 \, \mu^+_1,\qquad
s_- = 12 (a_4 + b_4) \, \mu^-_1 + 12 (a_3 + b_3) \, \mu^-_2.
\end{equation*}
The structure is Bach-flat iff $a_1 + 4a_0 z$ and $-(a_4 + b_4) + (a_3 +
b_3)z$ are linearly dependent, i.e.,
\begin{equation*}
a_1 (a_3 + b_3) + 4 a_0 (a_4 + b_4) =0.
\end{equation*}
For $p(z)=z$, $g=q(x,y)^2g_+/p(x,y)^2$ is CSC iff $a_0+b_0=a_2+b_2=a_4+b_4=0$,
and $a_1=b_1$.

\subsubsection*{Hyperbolic type} When $q(z)=2z$, $\d\tau_1=0$, and
$S^2_{0,q}W^*= \{p(z)=p_0z^2+p_2\}$; we may represent $[w]\in S^2W^*/\spn{q}$
by $w_0z^2+w_2$ with $\frac12\{q,w\}= -w_0z^2+w_2$ and define components of
$\xi\in \tor^*$ by $\xi(p)=\xi_1 p_2+\xi_2 p_0$. Modulo constants, the
Killing potentials for $\omega_\pm$ are spanned by
\begin{align*}
\mu^+_1 &= - \frac{1}{x + y}, &\mu^+_2 &= \frac{xy}{x + y}, \\
\mu^-_1 &= - \frac{1}{x - y}, &\mu^-_2 &=-\frac{xy}{x - y},
\end{align*}
while the barycentric metric $g_0$ and K\"ahler forms $\omega_\pm$ then take
the form
\begin{align*}
g_0 &= \frac{\d x^2}{A(x)} + \frac{\d y^2}{B(y)}
+ \frac{A(x)(\d t_1 + y^2 \d t_2)^2}{(x^2-y^2)^2}
+ \frac{B(y)(\d t_1 + x^2 \d t_2)^2}{(x^2-y^2)^2}\\
\omega_+  &= \frac{\d x\wedge (\d t_1 + y^2 \, \d t_2)}{(x + y)^2}
+ \frac{\d y \wedge (\d t_1 + x^2 \, \d t_2)}{(x + y)^2}\\
\omega_- &=  \frac{\d x\wedge (\d t_1 + y^2 \, \d t_2)}{(x - y)^2}
- \frac{\d y \wedge (\d t_1 + x^2 \, \d t_2)}{(x - y)^2}.
\end{align*}
The metrics $g_\pm$ are extremal iff
\begin{equation*}
a_0 + b_0 = a_2 + b_2 = a_4 + b_4 = 0,
\end{equation*}
in which case
\begin{equation*}
s_\pm = -6 (a_3 \pm  b_3) \, \mu^\pm_1 - 6 (a_1 \pm  b_1) \, \mu^\pm_2.
\end{equation*}
The Bach-flatness condition is therefore
\begin{equation*}
(a_3 - b_3) (a_1 + b_1) + (a_3 + b_3) (a_1 - b_1) = 0.
\end{equation*}
For $p(z)=1+\eps z^2$, $g=q(x,y)^2g_+/p(x,y)^2$ is CSC iff
$a_0+b_0=-\eps^2(a_4+b_4)$, $a_1+b_1=\eps(a_3+b_3)$, $a_2+b_2=0$, and
$a_1-b_1=-\eps(a_3-b_3)$. The resulting family
\begin{equation*}
\frac1{(1+\eps xy)^2}\biggl(\frac{(x^2-y^2)\d x^2}{A(x)} +
\frac{(x^2-y^2)\d y^2}{B(y)} + \frac{A(x)(\d t_1+y^2 \d t_2)^2}{x^2-y^2} +
\frac{B(y)(\d t_1+x^2 \d t_2)^2}{x^2-y^2}\biggr)
\end{equation*}
of metrics, where
\begin{align*}
A(z) &= h+\kappa + (\sigma+\delta)z + \gamma z^2 + \eps(\sigma-\delta)z^3
+(\lambda-\eps^2 h)z^4,\\
B(z) &= h-\kappa + (\sigma-\delta)z - \gamma z^2 + \eps(\sigma+\delta)z^3
-(\lambda+\eps^2 h)z^4,
\end{align*}
is an analytic continuation of the Pleba\'nski--Demia\'nski
family~\cite{Pleb-Dem,GP}.

\subsubsection*{Elliptic type} When $q(z)=1+z^2$, $\d\tau_0+\d\tau_2=0$, and 
$S^2_{0,q}W^*=\{p(z)=p_0z^2+2p_1z+p_2:p_2=-p_0\}$; we may represent $[w]\in
S^2W^*/\spn{q}$ by $-w_2z^2 +2w_1z + w_2$ with $\frac12\{q,w\}=w_1z^2 -2w_2z
-w_1$ and define components of $\xi\in \tor^*$ by $\xi(p)=\xi_1 p_1+\xi_2
p_2$. Modulo constants, the Killing potentials for $\omega_\pm$ are spanned by
\begin{align*}
\mu^+_1 &= -\frac{1 - xy}{1 + xy}, &\mu^+_2 &= - \frac{x + y}{1 + xy}, \\
\mu^-_1 &= -\frac{x + y}{x - y},  &\mu^-_2 &= \frac{1 - xy}{x - y},
\end{align*}
while the barycentric metric $g_0$ and K\"ahler forms $\omega_\pm$ then take
the form:
\begin{align*}
g_0&=\frac{\d x^2}{A(x)} + \frac{\d y^2}{B(y)}
+ \frac{A(x)(\d t_1+(y^2-1)\d t_2)^2}{(x-y)^2(1+xy)^2}
+ \frac{B(y)(\d t_1+(x^2-1)\d t_2)^2}{(x-y)^2(1+xy)^2}\\
\omega_+ &= \frac{\d x \wedge (2 y \, \d t_1 + (y^2 - 1) \d t_2)}{(1 + xy)^2}
+ \frac{\d y \wedge (2 x \, \d t_1 + (x^2 - 1) \d t_2)}{(1 + x y)^2}\\
\omega_- &= \frac{\d x \wedge (2 y \, \d t_1 + (y^2 - 1) \d t_2)}{(x - y)^2}
- \frac{\d y \wedge (2 x \, \d t_1 + (x^2 - 1) \d t_2)}{(x - y)^2}.
\end{align*}
The metrics $g_\pm$ are extremal iff
\begin{equation*}
a_2+b_2=0, \qquad a_0+b_0+a_4+b_4=0,\qquad a_1 + b_1 = a_3 + b_3,
\end{equation*}
in which case
\begin{equation*}
s_+ = 6 (a_3 - b_1) \mu^+_1 - 12 (a_4 + b_0)  \mu^+_2,\qquad
s_- = 12 (a_3 + b_3) \mu^-_1 + 12 (a_4 + b_4)  \mu^-_2.
\end{equation*}
The Bach-flatness condition is therefore:
\begin{equation*}
(a_3 - b_1) (a_3 + b_3)+ 4 (a_4 + b_4) (a_4 + b_0)=0.
\end{equation*}
For $p(z)=1-z^2$, $g=q(x,y)^2g_+/p(x,y)^2$ is CSC iff
$a_2+b_2=0$, $a_0+b_0=0$, $a_4+b_4=0$, and $a_1 + b_1 + a_3 + b_3 = 0$.
For $p(z)=z$, we have instead $a_0+b_0=0$, $a_2+b_2=0$, $a_4+b_4=0$ and
$a_1 - b_1 + a_3 - b_3=0$.

\subsubsection*{Summary table} The following table summarizes the extremal
metric conditions. \smallbreak

{\small
\begin{center}
\begin{tabular}{|c|c|c|c|}
\hline
{\bf Condition}
& {\bf Parabolic type} & {\bf Hyperbolic type} & {\bf Elliptic type}\\[0.5mm]
\hline \hline
$g_\pm$ extremal & 
  $a_0+b_0=0$ & $a_0+b_0=0$ & $a_0 + b_0 + a_4 + b_4 = 0$\\
& $a_1+b_1=0$ & $a_2+b_2=0$ & $a_2 + b_2=0$ \\
& $a_2+b_2=0$ & $a_4+b_4=0$ & $a_1 + b_1 = a_3 + b_3$\\[1mm]
\hline
$g_\pm$ Bach-flat  & extremal and & extremal  and & extremal and \\
& $a_1(a_3 + b_3)=$ & $(a_3 - b_3) (a_1 + b_1)=$ & $(a_3 - b_1)(a_3 + b_3)=$\\ 
& $ - 4 a_0 (a_4 + b_4) $ & $- (a_3 + b_3) (a_1 - b_1)$ &
$- 4 (a_4 + b_4) (a_4 + b_0)$\\[1mm]
\hline
$s_+ \equiv  0$ & extremal and & extremal  and & extremal and \\
($W_+ \equiv 0$)& $a_0 = 0$ & $a_1 = b_1$ & $a_3 =  b_1$\\
                & $a_1 = 0$ & $a_3  = b_3$ & $a_4 =- b_0$\\[1mm]
\hline
$s_- \equiv 0$ & extremal and & extremal  and & extremal and \\
($W_- \equiv 0$)& $a_3 =- b_3$ & $a_1 =- b_1$ & $a_3 =- b_3$\\
                & $a_4 =- b_4$ & $a_3 =- b_3$ & $a_4 =- b_4$\\
\hline
\end{tabular}
\end{center}
}

\medbreak

$g_-$ is never K\"ahler--Einstein, and is a CSC iff $s_-\equiv 0$. The same
holds for $g_+$ except in the parabolic case, when $g_+$ has constant scalar
curvature iff it is extremal with $a_0=0$, and is K\"ahler--Einstein if also
$a_3+b_3=0$.

\appendix

\section{The projective line and transvectants}\label{s:plt}

Let $W$ be a $2$-dimensional real vector space equipped with a symplectic form
$\kappa$ (a non-zero element of $\Wedge^2 W^*$). This defines an isomorphism
$W\to W^*$ sending $u\in W$ to the linear form $u^\flat\colon
v\mapsto\kappa(u,v)$; similarly there is a Lie algebra isomorphism from
$\mathfrak{sl}(W)$ (the trace-free endomorphisms of $W$) to $S^2W^*$ (the
quadratic forms on $W$, under Poisson bracket $\{,\}$) sending $a\in
\mathfrak{sl}(W)$ to the quadratic form $u\mapsto \kappa(a(u),u)$.

The quadratic form $-\det$ on $\mathfrak{sl}(W)$ induces a quadratic form $Q$
on $S^2W^*$ proportional to the discriminant, which polarizes to give an
$\mathfrak{sl}(W)$-invariant inner product $\ipq{p,\tilde p}=Q(p+\tilde
p)-Q(p)-Q(\tilde p)$ of signature $(2,1)$ satisfying the following identity:
\begin{equation}\label{Q-ip}
Q(\{p,\tilde p\}) = \ipq{p,\tilde p}^2 - 4 Q(p) Q(\tilde p).
\end{equation}

The analysis can be made more explicit by introducing a symplectic basis $e_1,
e_2$ of $W$ (so that $\kappa(e_1,e_2)=1$) and hence an affine coordinate $z$
on $\Proj(W)$ (with $[w]=[z([w])e_1+e_2]$). A quadratic form $q\in S^2 W^*$ may
then be written
\begin{equation*}
q(z) = q_0 z^2 + 2 q_1 z + q_2
\end{equation*}
with polarization
\begin{equation*}
q(x,y) = q_0 xy + q_1(x+y)+q_2.
\end{equation*}
In these coordinates the Poisson bracket of $q(z)$ with $w(z)$ is
\begin{gather*}
\{q,w\}(z) = q'(z)w(z)-w'(z)q(z)\qquad\text{with}\\
\{q,w\}_0 = 2 q_0 w_1 - 2 q_1 w_0, \quad
\{q,w\}_1 = q_0 w_2 - q_2 w_0, \quad 
\{q,w\}_2 = 2 q_1 w_2 - 2 q_2 w_1,
\end{gather*}
and the quadratic form and inner product on $S^2W^*$ are
\begin{equation*}
Q(q) = q_1^2 - q_0 q_2\qquad\text{and}\qquad
\ipq{q,p}=  2q_1 p_1 - (q_2 p_0 + q_0 p_2).
\end{equation*}

The elements of $S^m W^*$ may similarly be regarded as polynomials in one
variable of degree at most $m$. For any $n, m\in \N$, the tensor product $S^m
W^* \otimes S^n W^*$ has the following \emph{Clebsch--Gordan} decomposition
into irreducible component:
\begin{equation}
S^m W^* \otimes S^n W^* = \bigoplus_{r=0}^{\min\{m,n\}} S^{m+n - 2r} W^*.
\end{equation}
For any $r = 0, \ldots , \min\{m,n\}$, the corresponding SL$(W)$-equivariant
map $S^m W^* \otimes S^n W^* \to S^{m+n-2r} W^*$ (well-defined up to a
multiplicative constant) is called the \emph{transvectant} of order $r$, and
denoted $(p,q)^{(r)}$---see e.g., Olver~\cite{Olver}. For $m=n$, the
transvectant of order $r$ is symmetric if $r$ is even, and skew if $r$ is
odd. When $p,q$ are regarded as polynomials in one variable, it may be written
explicitly as:
\begin{equation} \label{trans}
(p,q)^{(r)} = \sum_{j=0}^r (-1)^j \binom{n-j}{r-j} \binom{m-r+j}{j} \,
p^{(j)} q ^{(r-j)},
\end{equation}
where $p^{(j)}$ stands for the $j$-th derivative of $p$, with $p ^{(0)} = p$,
and similarly for $q ^{(r-j)}$. In particular, $(p,q)^{(0)}$ is
multiplication, and for any $p,q\in S^2W^*$, $(p,q)^{(1)}$ and $(p,q)^{(2)}$
are constant multiples of the Poisson bracket and inner product respectively.

Elements of $S^mW^*$ (and corresponding polynomials in an affine coordinate)
may be viewed as (algebraic) sections of the degree $m$ line bundle $\cO(m)$
over $\Proj(W)$; in particular, there is a tautological section of
$\cO(1)\otimes W$. The formula~\eqref{trans} for transvectants extends from
algebraic sections to general smooth sections.

\section{Killing tensors and ambitoric conformal metrics}\label{Killing}

The material in this appendix is related to work of
W.~Jelonek~\cite{Jelonek2,Jelonek3a,Jelonek3b} and some well-known results in
general relativity, see \cite{Cosgrove} and \cite{Kamran}.  To provide a
different slant, we take a conformal viewpoint
(cf.~\cite{CD,BGG,Gauduchon,Semm}) and make explicit the connection with
M. Pontecorvo's description~\cite{Pontecorvo} of hermitian structures which
are conformally K\"ahler. We then specialize the analysis to ambitoric
structures.

\subsection{Conformal Killing objects}

Let $(M,c)$ be a conformal manifold. Among the conformally invariant linear
differential operators on $M$, there is a family which are overdetermined of
finite type, sometimes known as twistor or Penrose operators; their kernels
are variously called twistors, tractors, or other names in special
cases. Among the examples where the operator is first order are the equations
for twistor forms (also known as conformal Killing forms) and conformal
Killing tensors, both of which include conformal vector fields as a special
case. There is also a second order equation for Einstein metrics in the
conformal class. Apart from the obvious presence of (conformal) Killing vector
fields and Einstein metrics, conformal Killing $2$-tensors and twistor
$2$-forms are very relevant to the present work.

Let $S^k_0TM$ denote the bundle of symmetric $(0,k)$-tensors $\cS_0$ which are
tracefree with respect to $c$ in the sense that $\sum_i
\cS_0(\eps_i,\eps_i,\cdot)=0$ for any conformal coframe $\eps_i$. In
particular, for $k=2$, $\cS_0\in S^2_0TM$ may be identified with $\sigma_0\in
L^2\otimes\Sym_0(TM)$ via $\alpha\circ\sigma_0(X)=\cS_0(\alpha,c(X,\cdot))$
for any $1$-form $\alpha$ and vector field $X$. Here $\Sym_0(TM)$ is the
bundle of tracefree endomorphisms of $TM$ which are symmetric with respect to
$c$; thus $\sigma_0$ satisfies $c(\sigma_0(X),Y)=c(X,\sigma_0(Y))$ and hence
defines a (weighted) $(2,0)$-tensor $S_0$ in $L^4\otimes S^2_0T^*M$, another
isomorph of $S^2_0TM$ (in the presence of $c$).

A \emph{conformal Killing \textup($2$-\textup)tensor} is a section $\cS_0$ of
$S^2_0TM$ such that the section $\sym_0 D\cS_0$ of $L^{-2}\otimes S^3_0TM$ is
identically zero, where $D$ is any Weyl connection (such as the Levi-Civita
connection of any metric in the conformal class) and $\sym_0$ denotes
orthogonal projection onto $L^{-2}\otimes S^3_0TM$ inside $T^*M\otimes
S^2TM\cong L^{-2} \otimes TM\otimes S^2TM$. Equivalently $\sym D\cS_0 = \sym
(\chi\otimes c)$ for some vector field $\chi$. Taking a trace, we find that
$(n+2)\chi=2\delta^D\cS_0$, where $\delta^D\cS_0$ denotes $\trace_c D\cS_0$,
which may be computed, using a conformal frame $e_i$ with dual coframe
$\eps_i$, as $\sum_i D_{e_i}\cS_0(\eps_i,\cdot)$. Thus $\cS_0$ is conformal
Killing iff
\begin{equation}\label{eq:CK}
\sym D\cS_0 = \tfrac{2}{n+2}\sym (c\otimes\delta^D\cS_0),
\end{equation}
This is independent of the choice of Weyl connection $D$. On the open set
where $\cS_0$ is nondegenerate, there is a unique such $D$ with
$\delta^D\cS_0=0$, and hence a nondegenerate $\cS_0$ is conformal Killing iff
there is a Weyl connection $D$ with $\sym D\cS_0=0$.

A \emph{conformal Killing $2$-form} is a section $\phi$ of
$L^3\otimes\Wedge^2T^*M$ such that $\pi(D\phi)=0$ (for any Weyl connection
$D$) where $\pi$ is the projection orthogonal to $L^3\otimes\Wedge^3T^*M$ and
$L\otimes T^*M$ in $T^*M\otimes L^3\otimes\Wedge^2T^*M$. It is often more
convenient to identify $\phi$ with a section $\Phi$ of $L\otimes
\mathfrak{so}(TM)$ via $\phi(X,Y)=c(\Phi(X),Y)$, where $\mathfrak{so}(TM)$
denotes the bundle of skew-symmetric endomorphisms of $TM$ with respect to
$c$.

\subsection{Conformal Killing tensors and complex structures}\label{A:ckJ}

In four dimensions a conformal Killing $2$-form splits into selfdual and
antiselfdual parts $\Phi_\pm$, which are sections of $L\otimes
\mathfrak{so}_\pm(TM)\cong L^3\otimes\Wedge^2_\pm T^*M$. Following
M. Pontecorvo~\cite{Pontecorvo}, nonvanishing conformal Killing $2$-forms
$\Phi_+$ and $\Phi_-$ describe oppositely oriented K\"ahler metrics in the
conformal class, by writing $\Phi_\pm=\ell_\pm J_\pm$, where $\ell_\pm$ are
sections of $L$ and $J_\pm$ are oppositely oriented complex structures: the
K\"ahler metrics are then $g_\pm=\ell_\pm^{-2}c$. Conversely if
$(g_\pm=\ell_\pm^{-2}c,J_\pm)$ are K\"ahler and $D^\pm$ denote the Levi-Civita
connections of $g_\pm$ then $D^\pm(\ell_\pm J_\pm)=0$ so $\Phi_\pm=\ell_\pm
J_\pm$ are conformal Killing $2$-forms.

The tensor product of sections $\Phi_+$ and $\Phi_-$ of
$L\otimes\mathfrak{so}_+(TM)$ and $L\otimes\mathfrak{so}_-(TM)$ defines a
section $\Phi_+\Phi_-$: as a section of $L^2\otimes\Sym_0(TM)$, this is simply
the composite ($\Phi_+\circ\Phi_-=\Phi_-\circ\Phi_+$); as a section of
$L^4\otimes S^2_0T^*M$ it satisfies
$(\Phi_+\Phi_-)(X,Y)=c(\Phi_+(X),\Phi_-(Y))$.

When $\Phi_\pm=\ell_\pm J_\pm$ are nonvanishing, $\Phi_+\Phi_-=\ell_+\ell_-
J_+J_-$ is a symmetric endomorphism with two rank $2$ eigenspaces at each
point.  Conversely if $\sigma_0$ is such a symmetric endomorphism, we may
write $\sigma_0=\ell^2J_+J_-$ for uniquely determined almost complex
structures $J_\pm$ up to overall sign, and a positive section $\ell$ of $L$.

\begin{prop} A nonvanishing section $\sigma_0=\ell^2 J_+J_-$ of
$L^2\otimes \Sym_0(TM)$ \textup(as above\textup) is associated to a conformal
Killing $2$-tensor $\cS_0$ iff $J_\pm$ are integrable complex structures which
are ``K\"ahler on average'' with length scale $\ell$, in the sense that if
$D^\pm$ denote the canonical Weyl connections of $J_\pm$, then the connection
$D=\frac 12 (D^++D^-)$ preserves the length scale $\ell$ \textup(i.e.,
$D^+\ell+D^-\ell=0$\textup).

If these equivalent conditions hold, then also $\sym D\cS_0=0$.
\end{prop}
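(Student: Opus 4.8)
The plan is to exploit the conformal invariance of the conformal Killing operator to reduce everything to a computation with a single, well-adapted Weyl connection. First I would record the key translation of the ``K\"ahler on average'' condition. Writing $g=\ell^{-2}c$ and $D^\pm=D^g+\theta^g_\pm$, the induced connection on $L$ satisfies $D\ell=\tfrac12(D^+\ell+D^-\ell)=\tfrac12(\theta^g_++\theta^g_-)\ell$ (since $D^g\ell=0$), so $D\ell=0$ is equivalent to $\theta^g_++\theta^g_-=0$; in that case the average Weyl connection $D=\tfrac12(D^++D^-)$ coincides with the Levi-Civita connection $D^g$ of the barycentric metric $g=\ell^{-2}c$. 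I would also note that when $J_\pm$ are K\"ahler with length scales $\ell_\pm$, the barycentric relation forces $\ell^2=\ell_+\ell_-$, so that $\sigma_0=\ell^2J_+J_-=\Phi_+\Phi_-$ is literally the product of the two K\"ahler conformal Killing $2$-forms $\Phi_\pm=\ell_\pm J_\pm$ of Pontecorvo's correspondence, as recalled above.

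For the implication $(\Leftarrow)$, assume $J_\pm$ integrable and K\"ahler on average, so that $D=D^g$ and $\theta^g_+=-\theta^g_-=:\theta$. Using \eqref{DJ} for each $J_\pm$ I would reduce the derivative of $\sigma_0$ to those of $\omega_\pm$ via
\begin{equation*}
g\bigl((D^g_X(J_+J_-))Y,Z\bigr)=(D^g_X\omega_+)(J_-Y,Z)-(D^g_X\omega_-)(Y,J_+Z),
\end{equation*}
substitute $\theta^g_\pm=\pm\theta$, and verify that the total symmetrization of the resulting $3$-tensor in $(X,Y,Z)$ vanishes. Conceptually, integrability removes all Nijenhuis terms, leaving only the Lee-form contributions; the opposite signs $\pm\theta$ carried by $J_+$ and $J_-$ then make the cyclic sum cancel term by term. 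This yields the strong conclusion $\sym D\cS_0=0$, which in particular gives $\sym_0 D\cS_0=0$, so $\cS_0$ is conformal Killing, and simultaneously establishes the final assertion of the proposition.

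For the converse $(\Rightarrow)$, I would work with $D^g$ for $g=\ell^{-2}c$ and decompose $\sym_0 D^g\sigma_0$ according to the reduction of the structure group determined by $(J_+,J_-,\ell)$. On the dense set where $\sigma_0$ is nondegenerate this reduction fixes the $c$-orthogonal splitting $TM=T_+M\oplus T_-M$ into the $\pm1$-eigenbundles of $-J_+J_-$, on each of which $J_+$ and $J_-$ agree up to sign; the residual structure group is $U(1)\times U(1)$. Writing $D^gJ_\pm$ as the sum of its $J_\pm$-invariant (Lee-form) part, governed by $\theta^g_\pm$, and its $J_\pm$-anti-invariant part, governed by the Nijenhuis tensors $N_\pm$, I would show that these contribute to distinct $U(1)\times U(1)$-weight components of $L^{-2}\otimes S^3_0TM$. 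Hence $\sym_0 D^g\sigma_0=0$ forces the two families of components to vanish separately: vanishing of the Nijenhuis components gives integrability of $J_+$ and $J_-$, while vanishing of the remaining component is exactly the scale condition $\theta^g_++\theta^g_-=0$, i.e.\ K\"ahler on average.

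The main obstacle is this last, representation-theoretic step: one must verify that the Nijenhuis contributions and the Lee-form/scale contribution genuinely lie in different irreducible summands so that they decouple, and that the surviving scalar component is proportional to $\theta^g_++\theta^g_-$ rather than to some other combination of $\theta^g_\pm$ and $\d\log\ell$. I expect the cleanest route is to test the decoupling on the two eigenbundles $T_\pm M$, using there the relations between $J_+$ and $J_-$, which reduces the normalization of the surviving term to the forward computation already carried out and thereby pins it down unambiguously.
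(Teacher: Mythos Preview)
Your forward direction is essentially the paper's, with one cosmetic difference: the paper writes $D\sigma_0$ using arbitrary Weyl connections $D^\pm$ with average $D$, observes
\[
D\sigma_0 = D(\ell^2)\otimes J_+J_- + \ell^2\bigl(D^+J_+\circ J_- + J_+\circ D^-J_-\bigr) + R,\qquad \sym R=0,
\]
and then takes $D^\pm$ to be the \emph{canonical} Weyl connections of $J_\pm$, so that $D^\pm J_\pm=0$ when $J_\pm$ are integrable and the middle term vanishes outright. This avoids your explicit substitution of \eqref{DJ} and the term-by-term cancellation, but the content is the same.

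For the converse the paper takes a different and more elementary route than your weight decomposition. It exploits nondegeneracy of $\sigma_0$ to pick the unique Weyl connection $D$ with $\delta^D\cS_0=0$, so that the conformal Killing equation becomes the full $\sym D\cS_0=0$ rather than only $\sym_0 D^g\cS_0=0$. The displayed identity then reads as an equation with $D(\ell^2)$ on one side and first derivatives of $J_\pm$ on the other. Testing it with $X,Y,Z$ all in a common eigenbundle $T_+M$ or $T_-M$ (where $J_-=\pm J_+$) kills the $J$-derivative side by a short skewness argument, forcing $D\ell=0$ and hence identifying $D$ with the Levi-Civita connection of $g=\ell^{-2}c$. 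With the right-hand side now identically zero, the paper then substitutes complexified vectors $Z_1,Z_2$ of the same $J_+$-type but opposite $J_-$-types (and vice versa) to peel off $c((D_{Z_i}J_+)Z_j,Z_k)=0$, giving $D_{J_+X}J_+=J_+D_XJ_+$ and hence integrability. No $U(1)\times U(1)$ representation theory is needed.

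The gap in your proposal is exactly the one you flag: you have not established that the Nijenhuis contributions of $J_+$ and $J_-$ and the Lee-form combination $\theta^g_++\theta^g_-$ land in distinct irreducible pieces of $S^3_0T^*M$, nor that the surviving scalar is the right linear combination. Working with $\sym_0$ rather than $\sym$ aggravates this, since the trace part you discard carries $\delta^{D^g}\sigma_0$, which itself mixes Lee and Nijenhuis data. The paper's device of first passing to the divergence-free Weyl connection circumvents both issues at once; you may find it simpler to adopt that normalization and then carry out your type-by-type testing on $T_\pm M$, which is in effect what the paper does.
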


\noindent (With respect to an arbitrary metric $g$ in the conformal class, the
``K\"ahler on average'' condition means that the Lee forms $\theta_\pm^g$
satisfy $d(\theta_g^++\theta_g^-)=0$. In the case that $J_+$ and $J_-$ both
define conformally K\"ahler metrics $g_\pm$, the metric $g_0=\ell^{-2}c$ is
the barycentric metric with $g_0=f\,g_+=f^{-1}g_-$ for some function $p$.)

\begin{proof} Let $D$, $D^+$, $D^-$ be Weyl connections with
$D=\frac 12 (D^++D^-)$ in the affine space of Weyl connections. (Thus the
induced connections on $L$ are related by $D=D^++\theta=D^--\theta$ for some
$1$-form $\theta$.) Straightforward calculation shows that
\begin{equation*}
D\sigma_0 = D(\ell^2)\otimes J_+\circ J_- +\ell^2
\bigl(D^+J_+\circ J_- + J_+\circ D^- J_-\bigr)+R
\end{equation*}
where $R$ is an expression (involving $\theta$) whose symmetrization vanishes
(once converted into a trilinear form using $c$). If $J_\pm$ are integrable
and K\"ahler on average, then taking $D^\pm$ to be the canonical Weyl
connections and $\ell$ the preferred length scale, $\ell^2J_+J_-$ is thus
associated to a conformal Killing tensor $\cS_0$ with $\sym D\cS_0=0$.

For the converse, it is convenient (for familiarity of computation) to work
with the associated $(2,0)$-tensor $S_0$ with $S_0(X,Y)=\ell^2c(J_+J_-X,Y)$.
Since $S_0$ is nondegenerate, and associated to a conformal Killing tensor, we
can let $D=D^+=D^-$ be the unique Weyl connection with $\sym DS_0=0$: note
that $\sym\colon L^4 \otimes T^*M\otimes S^2T^*M\to L^4\otimes S^3T^*M$ here
becomes the natural symmetrization map. Thus
\begin{equation*}
\sum_{X,Y,Z} D_X(\ell^2) c(J_+\circ J_-Y,Z)=
\sum_{X,Y,Z} \ell^2\Bigl(c\bigl((D_XJ_+) J_-Y,Z\bigr)
+c\bigl(J_+(D_XJ_-)Y,Z\bigr)\Bigr),
\end{equation*}
where the sum is over cyclic permutations of the arguments. If $X,Y,Z$ belong
to a common eigenspace of $S_0$ then the right hand side is zero---this
follows because, for instance, $c\bigl((D_XJ_\pm)J_\pm Y,Z\bigr)$ is skew in
$Y,Z$ whereas the cyclic sum of the two terms is totally symmetric.

It follows that $D\ell=0$, hence the right hand side is identically zero in
$X,Y,Z$. Additionally $c(D_XJ_\pm \cdot ,\cdot)$ is
$J_\pm$-anti-invariant. Thus these $2$-forms vanish when their arguments have
opposite types ($(1,0)$ and $(0,1)$) with respect to the corresponding complex
structure. Now suppose for example that $Z_1$ and $Z_2$ have type $(1,0)$ with
respect to $J_+$, but opposite types with respect to $J_-$ ($J_+$ and $J_-$
are simultaneously diagonalizable on $TM\otimes \C$). Then by substituting
first $X=Y=Z_1$, $Z=Z_2$ into
\begin{equation*}
\sum_{X,Y,Z} c\bigl((D_XJ_+) J_-Y,Z\bigr)
=\sum_{X,Y,Z} c\bigl((D_XJ_-)Y,J_+Z\bigr),
\end{equation*}
and then $X=Y=Z_2$, $Z=Z_1$, we readily obtain
\begin{equation*}
c\bigl((D_{Z_1}J_+) Z_1,Z_2\bigr)=0=c\bigl((D_{Z_2}J_+) Z_1,Z_2\bigr).
\end{equation*}
Thus $D_{J_+X}J_+=J_+D_XJ_+$ for all $X$ and $J_+$ is integrable.
Similarly, we conclude $J_-$ is integrable.
\end{proof}

Since $D$ is the Levi-Civita connection $D^g$ of the ``barycentric'' metric
$g=\ell^{-2}c$, it follows that $S_0=g(J_+J_-\cdot,\cdot)$ is a \emph{Killing
  tensor} with respect to $g$, i.e., satisfies $\sym D^gS_0=0$ iff $J_+$ and
$J_-$ are integrable and K\"ahler on average, with barycentric metric
$g$. More generally, we can use this result to characterize, for any metric
$g$ in the conformal class and any functions $f,h$, the case that
\begin{equation} \label{S}
S (\cdot, \cdot) = f \, g (\cdot, \cdot)+  h \, g (J_+J_- \cdot, \cdot),
\end{equation}
is a Killing tensor with respect to $g$. If $\theta_\pm$ are the Lee forms of
$(g,J^\pm)$, i.e., $D^\pm=D^g\pm \theta_{\pm}$, then we obtain the following
more general corollary.

\begin{cor} \label{corvesti} $S=f \, g 
+h \, g (J_+J_- \cdot, \cdot)$, with $h$ nonvanishing, is a Killing tensor
with respect to $g$ if and only if:
\begin{gather}\label{v1}
\text{$J_+$ and $J_-$ are both integrable};\\
\label{v2} \theta_+ + \theta_- = -\frac{\d h}{h};\\
\label{v3} J_+ \d f = J_- \, \d h. 
\end{gather}
\end{cor}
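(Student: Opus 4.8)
The plan is to separate $S$ into its $g$-trace and tracefree parts and reduce the Killing equation to a conformal Killing condition, handled by the preceding Proposition, together with a single first-order constraint relating $f$ to a divergence. Write $P:=J_+J_-$; by Lemma~\ref{doubly-almost-complex} this is a $g$-symmetric involution equal to $-\Id$ on $T_+M$ and $+\Id$ on $T_-M$, so $g(P\cdot,\cdot)$ is tracefree and the splitting of $S$ into trace and tracefree parts is $S=fg+S_0$ with $S_0:=h\,g(P\cdot,\cdot)$. Since $D^gg=0$, one has $\sym D^gS=\sym(\d f\otimes g)+\sym D^gS_0$, and I claim $S$ is a Killing tensor (i.e. $\sym D^gS=0$) if and only if $S_0$ is a conformal Killing tensor and the $1$-form $\beta$ determined by $\sym D^gS_0=\sym(\beta\otimes g)$ satisfies $\beta=-\d f$. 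Indeed, if $\sym D^gS=0$ then $\sym D^gS_0=\sym((-\d f)\otimes g)$ is of pure trace type, so $S_0$ is conformal Killing with $\beta=-\d f$; conversely, once $S_0$ is conformal Killing, $\sym D^gS=\sym((\d f+\beta)\otimes g)$, which vanishes exactly when $\d f+\beta=0$. As $h$ is nonvanishing, $S_0$ is nondegenerate, so the preceding Proposition applies to it.

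Next I would dispose of the conformal Killing condition. By the preceding Proposition, the nondegenerate $S_0$ is conformal Killing iff $J_+$ and $J_-$ are integrable---this is \eqref{v1}---and they are ``K\"ahler on average'' for the length scale that $S_0$ determines, i.e. $\tfrac12(D^++D^-)$ preserves that scale. A weight computation identifies this length scale with the metric $g_0:=h^{-1}g$ (I may assume $h>0$, the sign of $h$ being locally constant and immaterial), under which the honest representative of the underlying conformal Killing tensor is $g_0(P\cdot,\cdot)$; the K\"ahler-on-average condition then becomes $\theta_+^{g_0}+\theta_-^{g_0}=0$, i.e. that $g_0$ is barycentric. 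Finally the Lee-form transformation law $\theta_\pm^{g_0}=\theta_\pm^{g}+\tfrac12\,\d\log h$ turns this into $\theta_++\theta_-=-\d h/h$, which is exactly \eqref{v2}. (Alternatively one applies the remark following the Proposition directly in the scale $g_0$.)

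It then remains, under \eqref{v1}--\eqref{v2}, to compute $\beta$ and recognise $\beta=-\d f$ as \eqref{v3}. Tracing $\sym D^gS_0=\sym(\beta\otimes g)$ over its last two arguments in dimension four gives $2\beta=\tfrac23\sum_i(D^g_{e_i}S_0)(e_i,\cdot)$, hence $\beta=\tfrac13\sum_i(D^g_{e_i}S_0)(e_i,\cdot)$, and expanding the derivative yields $\sum_i(D^g_{e_i}S_0)(e_i,X)=\d h(PX)+h\,g\bigl(\sum_i(D^g_{e_i}P)e_i,\,X\bigr)$. Differentiating \eqref{DJ} gives, for any integrable Hermitian $(g,J)$ with Lee form $\theta$, the identity $\sum_i(D^g_{e_i}J)e_i=-2J\theta^\sharp$, while an analogous ``twisted'' computation gives $\sum_i(D^g_{e_i}J_+)(J_-e_i)=-2P\,\theta_+^\sharp$. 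Hence $\sum_i(D^g_{e_i}P)e_i=\sum_i(D^g_{e_i}J_+)(J_-e_i)+J_+\sum_i(D^g_{e_i}J_-)e_i=-2P(\theta_++\theta_-)^\sharp$, and substituting \eqref{v2} collapses the divergence to $\sum_i(D^g_{e_i}S_0)(e_i,X)=3\,\d h(PX)$, so $\beta=\d h\circ P$. Since $P=-\Id$ on $T_+M$ and $+\Id$ on $T_-M$, the equation $\beta=-\d f$, i.e. $\d f=-\d h\circ P$, reads $\d f=\d h$ on $T_+M$ and $\d f=-\d h$ on $T_-M$; as $J_+=J_-$ on $T_+M$ and $J_+=-J_-$ on $T_-M$, this is precisely $J_+\d f=J_-\d h$, which is \eqref{v3}.

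The main obstacle is the twisted divergence $\sum_i(D^g_{e_i}J_+)(J_-e_i)$ in the last step. Unlike the plain divergence $\sum_i(D^g_{e_i}J)e_i$, its evaluation genuinely uses the integrability of $J_+$ through \eqref{DJ} and a careful account of the $J_\pm$-types of the frame vectors; moreover it is the precise coefficient obtained there---producing $3\,\d h\circ P$, rather than some other multiple, after using \eqref{v2}---that makes $\beta=-\d f$ coincide with the clean relation \eqref{v3}. Everything else is either formal (the trace/tracefree splitting and the pure-trace characterisation of conformal Killing tensors) or a direct appeal to the preceding Proposition and the elementary Lee-form calculus.
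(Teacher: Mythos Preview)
Your proof is correct and follows precisely the route the paper implicitly intends: the Corollary is stated without proof as a direct consequence of the preceding Proposition, and you have supplied the natural derivation---splitting $S$ into its trace and tracefree parts, invoking the Proposition for the conformal Killing condition on $S_0=h\,g(J_+J_-\cdot,\cdot)$ to obtain \eqref{v1}--\eqref{v2}, and then computing the divergence $\beta=\tfrac13\delta^gS_0$ to identify the residual constraint $\beta=-\d f$ with \eqref{v3}. The weight identification $g_0=h^{-1}g$ and the Lee-form transformation are handled correctly, and your computation of the twisted divergence $\sum_i(D^g_{e_i}J_+)(J_-e_i)=-2P\theta_+^\sharp$ via \eqref{DJ} is accurate (and, as you note, is the one place where integrability enters essentially).
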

\noindent (Obviously when $h$ is identically zero, $S$ is a Killing tensor iff
$f$ is constant.)

\subsection{Conformal Killing tensors and the Ricci tensor}

The tracefree part $\rc{g}_0 = \rc{g}-\frac 1n \s{g} g$ of the Ricci tensor
of a compatible metric $g=\mu_g^{-2}c$ on a conformal $n$-manifold $(M,c)$
defines a tracefree symmetric $(0,2)$-tensor
$\cS_0^g(\alpha,\beta)=\rc{g}_0(\alpha^\sharp,\beta^\sharp)$ (where for
$\alpha\in T^*M$, $g(\alpha^\sharp,\cdot)=\alpha)$), where the corresponding
section of $L^4\otimes S^2_0T^*M$ is $\mu_g^4\rc{g}_0$.

The differential Bianchi identity implies that $0=\delta^g (\rc{g}-\frac 12
\s{g} g)= \delta^g\rc{g}_0-\frac{n-2}{2n} \d\s{g}$. Hence the following are
equivalent:
\begin{bulletlist}
\item $\cS_0^g$ is a conformal Killing tensor;
\item $\sym D^g\cS_0^g = \frac{n-2}{n(n+2)} \sym (g^{-1}\otimes \d\s{g})$;
\item $\rc{g}-\frac 2{n+2} \s{g} g$ is a Killing tensor with respect to $g$;
\item $D^g_X\rc{g}(X,X) = \frac 2{n+2} \d\s{g}(X) g(X,X)$ for all vector
fields $X$.
\end{bulletlist}
Riemannian manifolds $(M,g)$ satisfying these conditions were introduced by
A. Gray as $\mathcal A C^\perp$-manifolds~\cite{Gray}. Relevant for this paper
is the case $n=4$ and the assumption that $\rc{g}$ has two rank $2$
eigendistributions, which has been extensively studied by
W.~Jelonek~\cite{Jelonek3a,Jelonek3b}.

Supposing that $g$ is not Einstein, Corollary~\ref{corvesti} implies, as shown
by Jelonek, that
\begin{equation*}
\rc{g}-\tfrac 13 \s{g} g = f \, g +  h \, g (J_+J_- \cdot, \cdot)
\end{equation*}
is Killing with respect to $g$ iff~\eqref{v1}--\eqref{v3} are satisfied. Since
$J_\pm$ are both integrable, Jelonek refers to such manifolds as
\emph{bihermitian Gray surfaces}. It follows from~\cite{AG1} that both
$(g,J_+)$ and $(g,J_-)$ are conformally K\"ahler, so that in the context of
the present paper, a better terminology would be \emph{ambik\"ahler Gray
  surfaces}.

However, the key feature of such metrics is that the Ricci tensor is
$J_\pm$-invariant: as long as $J_\pm$ are conformally K\"ahler,
Proposition~\ref{p:diagonal-ambi} applies to show that the manifold is either
ambitoric or of Calabi type; it is not necessary that the $J_\pm$-invariant
Killing tensor constructed in the proof is equal to the Ricci tensor $\rc{g}$.

Jelonek focuses on the case that the ambihermitian structure has Calabi
type. This is justified by the global arguments he employs. In the ambitoric
case, there are strong constraints, even locally.

\subsection{Killing tensors and hamiltonian $2$-forms}

The notion of hamiltonian $2$-forms on a K\"ahler manifold $(M,g,J,\omega)$
has been introduced and extensively studied in \cite{ACG,ACG2}. According to
\cite{ACG2}, a $J$-invariant $2$-form $\phi$ is hamiltonian if it satisfies
\begin{equation}\label{hamiltonian-new}
D_X \phi =
\frac{1}{2} \Big( \d\sigma  \wedge J X^\flat - J \d\sigma \wedge X^\flat \Big),
\end{equation}
for any vector field $X$, where $X^\flat= g(X)$ and $\sigma={\rm tr}_{\omega}
\phi = g(\phi, \omega)$ is the trace of $\phi$ with respect to $\omega$. An
essentially equivalent (but not precisely the same) definition was given in
the four dimensional case in \cite{ACG}, by requiring that a $J$-invariant
$2$-form $\varphi$ is closed and its primitive part $\varphi_0$ satisfies
\begin{equation}\label{hamiltonian-old}
D_X \varphi_0 = -\frac{1}{2} \d\sigma(X)\omega
+ \frac12 \Big( \d\sigma\wedge J X^\flat - J \d\sigma \wedge X^\flat \Big),
\end{equation}
for some smooth function $\sigma$.  Note that, in order to be closed,
$\varphi$ is necessarily of the form $\frac{3}{2} \sigma \omega + \varphi_0$.

The relation between the two definitions is straightforward:
$\varphi=\frac{3}{2} \sigma \omega + \varphi_0$ is closed and verifies
\eqref{hamiltonian-old} iff $\phi = \varphi_0 + \frac{1}{2}\sigma \omega$
satisfies \eqref{hamiltonian-new}.

Specializing Corollary~\ref{corvesti} to the case when the metric $g$ is
K\"ahler with respect to $J=J_+$ allows us to identify $J$-invariant symmetric
Killing tensors with hamiltonian $2$-forms as follows:

\begin{prop}\label{kahler-case} Let $S$ be a symmetric $J$-invariant tensor
on a K\"ahler surface $(M,g,J,\omega)$, and $\psi(\cdot, \cdot)= S(J\cdot,
\cdot)$ be the associated $J$-invariant $2$-form. Then $S$ is Killing iff
$\phi = \psi - ({\rm tr}_{\omega}\psi) \omega$ is a hamiltonian $2$-form
\textup(i.e., verifies \eqref{hamiltonian-new}\textup).
\end{prop}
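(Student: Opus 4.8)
The plan is to deduce the statement from Corollary~\ref{corvesti}, which already characterizes Killing tensors of the form $S = f\,g + h\,g(J_+J_-\cdot,\cdot)$, by specializing to $J_+=J$ with $g$ K\"ahler. First I would reduce $S$ to this shape. Since $S$ is symmetric and $J$-invariant, on the open set where $S$ is not a multiple of $g$ its two eigendistributions are smooth, $J$-invariant and orthogonal, so setting $J_+:=J$ and letting $J_-$ act as $\pm J$ on them defines a smooth almost complex structure commuting with $J_+$ and inducing the opposite orientation; then $S = f\,g + h\,g(J_+J_-\cdot,\cdot)$ with $h$ nowhere zero there. A short computation gives $\psi = S(J\cdot,\cdot)=f\,\omega_+ - h\,\omega_-$, where $\omega_\pm:=g(J_\pm\cdot,\cdot)$, so that $\omega_+=\omega$ is selfdual and $\omega_-$ is the $J_+$-invariant antiselfdual partner ($\omega_-\perp\omega_+$). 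Hence $\mathrm{tr}_\omega\psi = 2f$ (using $|\omega_+|^2=2$) and $\phi = \psi - (\mathrm{tr}_\omega\psi)\,\omega = -f\,\omega_+ - h\,\omega_-$, with $\sigma := \mathrm{tr}_\omega\phi = -2f$. In particular $\phi$ is $J$-invariant, as the definition requires.

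Next, since $g$ is K\"ahler with respect to $J_+$, the Lee form $\theta_+$ vanishes, so Corollary~\ref{corvesti} asserts that on $\{h\ne 0\}$ the tensor $S$ is Killing iff $J_-$ is integrable, $\theta_- = -\d h/h$, and $J_+\,\d f = J_-\,\d h$, that is \eqref{v1}--\eqref{v3} with $\theta_+=0$. I would then verify that these three conditions are equivalent to the hamiltonian equation \eqref{hamiltonian-new} for $\phi$. To do so, differentiate $\phi=-f\omega_+-h\omega_-$ using $D\omega_+=0$ and---once $J_-$ is integrable---formula \eqref{DJ} applied to the hermitian (non-K\"ahler) structure $(g,J_-)$, which gives $D_X\omega_- = J_-\theta_-\wedge X^\flat + \theta_-\wedge J_-X^\flat$. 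Substituting $\sigma=-2f$ into the right-hand side of \eqref{hamiltonian-new} (so that it equals $-(\d f\wedge J_+X^\flat - J_+\d f\wedge X^\flat)$) and comparing the two expressions, I would decompose the resulting $1$-form-valued $2$-form into its selfdual part (multiples of $\omega_+$), its $J_+$-invariant primitive part (multiples of $\omega_-$), and its $J_+$-anti-invariant part. Matching these pieces should return precisely $\theta_-=-\d h/h$ from the $\omega_-$-components and $J_+\d f = J_-\d h$ from the anti-invariant components, while the selfdual/trace identity $D_X\sigma = \d\sigma(X)$ holds automatically. Conversely, vanishing of the $J_+$-anti-invariant part of \eqref{hamiltonian-new} is exactly what forces $J_-$ to be integrable, closing the equivalence.

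It remains to treat the locus where $S=f\,g$ (i.e.\ $h\equiv 0$ on an open set), where Corollary~\ref{corvesti} does not directly apply: there $S$ is Killing iff $f$ is constant, while $\phi=-f\,\omega$ and a one-line check of \eqref{hamiltonian-new} shows $\phi$ is hamiltonian iff $\d f=0$ (its right-hand side has nonvanishing primitive part whenever $\d f\ne 0$, whereas $D_X\phi$ is then a multiple of $\omega$). The two characterizations therefore agree, and by continuity the equivalence extends across the degenerate locus to all of $M$.

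The main obstacle I anticipate is the bookkeeping in the middle step: carefully organizing the selfdual/antiselfdual (equivalently trace/primitive) decomposition of $D_X\phi$ and of the model right-hand side of \eqref{hamiltonian-new}, and pinning down the exact numerical coefficients. The conceptual point to make precise is that the trace correction $-(\mathrm{tr}_\omega\psi)\,\omega$ in the definition of $\phi$ is designed so that the primitive part of the Killing equation for $S$ lands in the normal form \eqref{hamiltonian-new}; getting this correction and the value $\sigma=-2f$ exactly right is where the computation must be carried out with care.
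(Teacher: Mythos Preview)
Your route is genuinely different from the paper's: the paper does not verify~\eqref{hamiltonian-new} directly but instead quotes from~\cite{ACG2} the equivalence ``$\phi$ hamiltonian $\Leftrightarrow$ $\varphi:=\phi+(\mathrm{tr}_\omega\phi)\omega$ is closed and $\psi:=\phi-(\mathrm{tr}_\omega\phi)\omega$ corresponds to a Killing tensor''. The implication $\phi$ hamiltonian $\Rightarrow$ $S$ Killing is then immediate, and for the forward direction one only has to check that when $S$ is Killing the form $\varphi=-3f\,\omega_+ + h^3\omega_-$ (with $\omega_-$ the K\"ahler form of $g_-=h^{-2}g$) is closed, which follows at once from~\eqref{v2K} and~\eqref{v3}. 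Your plan is more self-contained but requires substantially more bookkeeping.

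There is, however, a real gap in your converse argument. You write that ``vanishing of the $J_+$-anti-invariant part of~\eqref{hamiltonian-new} is exactly what forces $J_-$ to be integrable''. This cannot work: with your conventions $\omega_-:=g(J_-\cdot,\cdot)$ is antiselfdual, and for \emph{any} almost complex $J_-$ the $2$-form $D_X\omega_-$ is $J_-$-anti-invariant, hence lies in $\Wedge^-M\ominus\mathbb{R}\omega_-\subset\Wedge^-M$, which is entirely $J_+$-invariant. Thus every term of $D_X\phi=-\d f(X)\omega_+-\d h(X)\omega_- - hD_X\omega_-$ is $J_+$-invariant regardless of whether $J_-$ is integrable, and the right-hand side $\tfrac12(\d\sigma\wedge JX^\flat-J\d\sigma\wedge X^\flat)$ is $J_+$-invariant as well. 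The $J_+$-anti-invariant component of~\eqref{hamiltonian-new} is therefore vacuous and carries no information about $J_-$.

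Your approach is still salvageable, but the missing idea is that integrability of $J_-$ and the identification $\theta_-=-\d h/h$ must both be read off from the $(\Wedge^-M\ominus\mathbb{R}\omega_-)$-component, by analysing the \emph{$X$-dependence}: in four dimensions $T^*M\otimes(\Wedge^-M\ominus\mathbb{R}\omega_-)$ splits under $U(2)_{J_-}$ into a ``Lee-form'' piece ($X\mapsto J_-\theta\wedge X^\flat+\theta\wedge J_-X^\flat$) and a Nijenhuis piece, and one must show that the projection of the model right-hand side to this bundle lies entirely in the Lee-form piece with $\theta=-\d h/h$ (using~\eqref{v3}, which you correctly extract from the $\omega_-$-trace). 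Either carry out that refined decomposition, or---more economically---follow the paper and reduce the whole question to the closedness of $\varphi$.
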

\begin{proof}
As observed in \cite[p.~407]{ACG2}, $\phi$ satisfies \eqref{hamiltonian-new}
iff $\varphi= \phi + ({\rm tr}_{\omega} \phi) \omega$ is a closed $2$-form and
$\psi= \phi -({\rm tr}_{\omega} \phi) \omega$ is the $2$-form associated to a
$J$-invariant Killing tensor (this is true in any complex dimension $m>1$).

Noting that the $2$-forms $\varphi$ and $\psi$ are related by $\varphi = \psi
- \frac{2 {\rm tr}_{\omega} \psi}{m-1}\omega$ , we claim that in complex
dimension $m=2$, the $2$-form $\varphi= \phi -2 ({\rm tr}_{\omega} \psi)
\omega$ is automatically closed, provided that $\psi$ is the $2$-form
associated to a $J$-invariant Killing tensor $S$. Indeed, under the K\"ahler
assumption the conditions~\eqref{v1}--\eqref{v2} specialize as
\begin{equation} \label{v1K} \text{$J_-$ is integrable},
\end{equation}
\begin{equation} \label{v2K} \theta_- = - \, \frac{\d h}{h}, 
\end{equation}
It follows that $(g_- = h^{-2} \, g, J_-, \omega_- = g_- (J_- \cdot, \cdot))$
defines a K\"ahler metric.  From \eqref{S} we have
\begin{equation} \psi = f \, \omega_+ +  h^3 \, \omega_-,
\end{equation}
where $\omega_+ = g (J_+ \cdot, \cdot)$ denotes the K\"ahler form
of  $(g, J_+)$. In particular, the trace of $\varphi$
with respect to $\omega_+$ is equal to $2 f$ while the  condition
(\ref{v3}) and the fact that $\omega_-$ is closed imply that $\varphi = 
\psi - 4f \,
\omega_+ = - 3 f \, \omega_+ + h^3 \, \omega_-$ is
closed too.
\end{proof}

\subsection{Killing tensors associated to ambitoric structures}

We have seen in the previous subsections that there is a link between Killing
tensors and ambihermitian structures. We now make this link more explicit in
the case of ambitoric metrics.

In the ambitoric situation, the barycentric metric $g_0$ (see
section~\ref{s:loc-class}) satisfies $\theta^0_+ + \theta^0_- = 0$. It then
follows from Corollary~\ref{corvesti} that the (tracefree) symmetric bilinear
form $g_0 (I \cdot, \cdot)$ (with $I = J_+ \circ J_-$) is Killing with respect
to $g_0$. More generally, let $g$ be any $(K_1, K_2)$-invariant riemannian
metric in the ambitoric conformal class $c$, so that $g$ can be written as $g
= h\, g_0$ for some positive function $h (x, y)$, where $x, y$ are the
coordinates introduced in section~\ref{s:loc-class}. Then $\theta^g_+ +
\theta^g_- = - \d\log{h}$. From Corollary~\ref{corvesti} again, the symmetric
bilinear form $S_0(\cdot, \cdot)=h \, g(I \cdot, \cdot)$ is conformal Killing.
Moreover, by condition (\ref{v3}) in Proposition~\ref{corvesti}, it can be
completed into a Killing symmetric bilinear form $S= f \, g + S_0$ iff the
$1$-form $\d h \circ I$ is closed. Since $I \d x = - \d x$ and $I \d y = \d y$,
$\d h \circ I$ is closed iff $h_x \, \d x - h_y \, \d y$ is closed, iff $h_{x
y} = 0$; the general solution is $h (x, y) = F (x) - G (y)$, for some
functions $F, G$. Note that the coefficient $f(x,y)$ is determined by $\d f
=-I \d h = F'(x)\d x + G'(y)\d y$ (see \eqref{v3}), so we can take without loss
$f(x,y)= F(x) + G(y)$.  Thus, $S$ is Killing, with eigenvalues (with respect
to $g$) equal to $2F (x)$ and $2G (y)$.

A similar argument shows that any metric of the form $g= f(z) g_0$, where
$g_0$ is the barycentric metric of an ambik\"ahler pair of Calabi type and $z$
is the momentum coordinate introduced in section~\ref{s:calabi-type}, admits a
nontrivial symmetric Killing tensor of the form $S(\cdot, \cdot)= f(z)g(\cdot,
\cdot) + f(z) g(I\cdot, \cdot)$ (and hence with eigenvalues $(2f(z), 0)$).

It follows that there are infinitely many $\tor$-invariant metrics in a given
ambitoric conformal class, which admit nontrivial symmetric Killing tensors.

There are considerably fewer such metrics with diagonal Ricci tensor.  By
Proposition~\ref{p:diagonal-ricci} these have the form $g=h(x,y) g_0$ where
$h(x,y)=(x-y)q(x,y)/p(x,y)^2$. In order for $g$ to admit a nontrivial
symmetric Killing tensor, we must have $h_{xy}=0$. A calculation shows that
this happens iff $Q(p)=0$ (i.e., $p(z)$ has repeated roots). Since $p$ is
orthogonal to $q$, this can only happen if $Q(q)\geq 0$ and there are
generically ($Q(q)>0$) just two solutions for $p$, which coincide if $Q(q)=0$.

\end{document}